\newtheorem{thm}{Theorem}[section]
\newtheorem{cor}[thm]{Corollary}
\newtheorem{lem}[thm]{Lemma}
\newtheorem{prop}[thm]{Proposition}
\theoremstyle{definition}
\newtheorem{defn}[thm]{Definition}
\theoremstyle{remark}
\newtheorem{rem}[thm]{Remark}
\numberwithin{equation}{section}
\begin{document}

\title{ The geometry of Ulrich bundles on del Pezzo surfaces}
\author{Emre Coskun}
\address{Department of Mathematics, University of Western Ontario, London, ON, N6A 5B7 CANADA}
\email{ecoskun@uwo.ca}
\author{Rajesh S. Kulkarni}
\address{Department of Mathematics, Michigan State University, East Lansing, MI 48824}
\email{kulkarni@math.msu.edu}
\author{Yusuf Mustopa}
\address{Department of Mathematics, University of Michigan, Ann Arbor, MI 48109}
\email{ymustopa@umich.edu}


\date{\today}
\begin{abstract}
Given a smooth del Pezzo surface $X_d \subseteq \mathbb{P}^{d}$ of degree $d,$ we show that a smooth irreducible curve $C$ on $X_d$ represents the first Chern class of an Ulrich bundle on $X_d$ if and only if its  kernel bundle $M_C$ admits a generalized theta-divisor.  

This result is applied to produce new examples of complete intersection curves with semistable kernel bundle, and also combined with work of Farkas-Musta\c{t}\v{a}-Popa to relate the existence of Ulrich bundles on $X_d$ to the Minimal Resolution Conjecture for curves lying on $X_d.$  In particular, we show that a smooth irreducible curve $C$ of degree $3r$ lying on a smooth cubic surface $X_3$ represents the first Chern class of an Ulrich bundle on $X_3$ if and only if the Minimal Resolution Conjecture holds for $C.$    
\end{abstract}
\maketitle
\thispagestyle{plain}

\section{Introduction}

Let $X \subseteq \mathbb{P}^{n}$ be a smooth projective variety of degree $d$ and dimension $k.$  Recall that a vector bundle $\mathcal{E}$ on $X$ is ACM (Arithmetically Cohen-Macaulay) if $\Gamma_{\ast}(\mathcal{E}):=\oplus_{m \in \mathbb{Z}}H^0(\mathcal{E}(m))$ is a graded Cohen-Macaulay module over $S:=\textnormal{Sym}^{\ast}H^{0}(\mathcal{O}_{\mathbb{P}^{N}}(1)).$  These bundles, which have been studied by numerous authors (see \cite{Fae} and the references therein), occur naturally in areas such as liaison theory and the study of singularities.  

In this article we are concerned with the ``nicest" ACM bundles on $X,$ a notion which we now make precise.  If $M$ is a graded Cohen-Macaulay $S-$module supported on $X,$ then by the Auslander-Buchsbaum theorem, $M$ admits a graded minimal resolution of the form
\begin{equation}
\label{acmres}
0 \leftarrow M \leftarrow \oplus_{i}S(-a_{0,i})^{\beta_{0,i}} \xleftarrow{\phi_{0}} \cdots \xleftarrow{\phi_{n-k-1}} \oplus_{i}S(-a_{n-k,i})^{\beta_{n-k,i}} \leftarrow 0
\end{equation}
We say that the $S-$module $M$ is \textit{Ulrich} if $a_{j,i} = j$ for $0 \leq j \leq n-k$ and all $i.$  This notion grew out of work of Ulrich on Gorenstein rings \cite{Ulr} and was studied extensively in the papers \cite{BHS, BHU1, BHU2}.  Up to twisting, Ulrich $S-$modules (which have also been referred to as ``linear maximal Cohen-Macaulay modules") are precisely the graded Cohen-Macaulay modules for which the maps $\phi_{i}$ are all matrices of linear forms.  

Accordingly, a vector bundle $\mathcal{E}$ on $X$ is said to be an \textit{Ulrich bundle} if $\Gamma_{\ast}(\mathcal{E})$ is an Ulrich $S-$module.  The body of work on Ulrich modules cited in the previous paragraph implies that Ulrich bundles exist on curves, linear determinantal varieties, hypersurfaces, and complete intersections.  Moreover, Ulrich bundles are semistable in the sense of Gieseker (e.g. Proposition \ref{giesstab}), so once we fix rank and Chern class they may be parametrized up to $S-$equivalence by a quasi-projective scheme.  

In the case where $X$ is a hypersurface in $\mathbb{P}^{n}$ defined by a homogeneous form $f(x_0, \cdots ,x_n),$ Ulrich bundles on $X$ correspond to linear determinantal descriptions of powers of $f$ (e.g. \cite{Bea}). This has been generalized to the case of arbitrary codimension in \cite{ESW}, where Ulrich bundles are shown to correspond to linear determinantal descriptions of powers of Chow forms.  

Around the same time as the appearance of \cite{Ulr}, van den Bergh effectively showed in \cite{vdB} that when $X \subseteq \mathbb{P}^{n}$ is a hypersurface of degree $d$ defined by the equation $w^d=g(x_1, \cdots ,x_n),$ Ulrich bundles on $X$ correspond to representations of the generalized Clifford algebra of $g.$  For recent work based on this correspondence, we refer to \cite{Cos, Kul, CKM1, CKM2}. 

It is clear by now that the concept of an Ulrich bundle encodes substantial algebraic information.  Our goal is to show that (at least) in the case of a del Pezzo surface, it also encodes substantial geometry.  

To motivate the source of this geometry, we first consider Ulrich line bundles on a smooth irreducible curve $X \subseteq \mathbb{P}^{n}$ of degree $d$ and genus $g.$  We may associate to the line bundle $\mathcal{O}_{X}(-1)$ a theta-divisor in $\textnormal{Pic}^{g-1+d}(X),$ which is defined set-theoretically as 
\begin{equation}
\Theta_{\mathcal{O}_{X}(-1)}:=\{ L \in \textnormal{Pic}^{g-1+d}(C) : H^0(L(-1)) \neq 0\}
\end{equation}
Since this is a translate of the classical theta-divisor on $\textnormal{Pic}^{g-1}(X),$ it is an ample effective divisor on $\textnormal{Pic}^{g-1+d}(X).$  

Using (v) of Proposition \ref{cliffdef}, one sees that a line bundle $L$ on $X$ is Ulrich precisely when it is of degree $g-1+d$ and its isomorphism class lies in the Zariski-open subset $\textnormal{Pic}^{g-1+d}(X)  \setminus  \Theta_{\mathcal{O}_{X}(-1)},$ i.e. it satisfies the vanishings 
\begin{equation}
H^0(L(-1)) = H^1(L(-1)) =0.
\end{equation}  
In particular, Ulrich line bundles on $X$ are parametrized up to isomorphism by an affine variety.  This characterization generalizes naturally to Ulrich bundles of rank $r \geq 2$ on $X$, if we replace $\textnormal{Pic}^{g-1+d}(X)$ with the moduli space $U_X(r,r(g-1+d))$ of semistable vector bundles on $X$ of rank $r$ and degree $r(g-1+d).$  

Turning to higher dimensions, one has a statement of this type for rank-2 Ulrich bundles on cubic threefolds. It is shown in \cite{Bea1} that such bundles, which have first Chern class equal to $2H,$ are parametrized by the complement of a divisor algebraically equivalent to $3\theta$ in the intermediate Jacobian.  For a variety $X \subseteq \mathbb{P}^{n}$ whose Picard number is 2 or greater, the problem of parametrizing Ulrich bundles of given rank on $X$ becomes more involved, partly because finding the divisor classes on $X$ which represent the first Chern class of an Ulrich bundle is less straightforward.      

Our main result characterizes these divisor classes when $X$ is a del Pezzo surface. This case has recently been studied in the papers \cite{CH, CH1, MP2} with a view to constructing indecomposable (e.g.~stable) Ulrich bundles of high rank.  It implies that Ulrich bundles of all ranks on del Pezzo surfaces, like their counterparts on curves, are fundamentally theta-divisorial in nature.
  
\begin{thm}
\label{main}
Let $X_{d} \subseteq \mathbb{P}^{d}$ be a del Pezzo surface of degree $d,$ and let $D \subseteq X_{d} \subseteq \mathbb{P}^{d}$ be a smooth connected curve of genus $g$ which does not lie in a hyperplane.  Then the following are equivalent:
\begin{itemize}
\item[(i)]{There exists an Ulrich bundle of rank $r$ on $X_{d}$ with $c_{1}(\mathcal{E})=D.$}
\item[(ii)]{The degree of $D$ is $dr,$ and for general smooth $C \in |D|,$ the kernel bundle $$M_{C}:=\ker(ev:H^{0}(\mathcal{O}_{X}(1)) \otimes \mathcal{O}_{C} \rightarrow \mathcal{O}_{C}(1))$$ admits a theta-divisor in $\textnormal{Pic}^{g-1+r}(C),$ i.e. there exists a line bundle $\mathcal{L}$ on $C$ of degree $g-1+r$ for which 
\begin{equation}
\label{mvanishing}
H^{0}(M_{C}\otimes \mathcal{L})=H^1(M_{C} \otimes \mathcal{L})=0.
\end{equation}}
\end{itemize}
\end{thm}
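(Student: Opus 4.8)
The plan is to reduce the Ulrich condition on the surface to cohomology vanishing, extract the numerical constraint $\deg D = dr$, and then realize (i) and (ii) as the nonemptiness of two sets linked by an explicit construction. First I would record the cohomological reading of the linear resolution (\ref{acmres}): on a surface $X_d\subseteq\mathbb{P}^d$, a bundle $\mathcal{E}$ is Ulrich if and only if $H^i(\mathcal{E}(-1))=H^i(\mathcal{E}(-2))=0$ for all $i$. Applying Hirzebruch--Riemann--Roch to the Ulrich Hilbert polynomial $\chi(\mathcal{E}(m))=dr\binom{m+2}{2}$, together with $\mathcal{O}_X(1)=-K_X$, $H^2=d$, and $\chi(\mathcal{O}_X)=1$, a comparison of the linear coefficients in $m$ yields $c_1(\mathcal{E})\cdot H=dr$; this establishes the numerical half of (ii) and forces $\deg C=dr$, whence $\operatorname{rk}M_C=d$, $\deg M_C=-dr$, and $\chi(M_C\otimes\mathcal{L})=0$ for $\deg\mathcal{L}=g-1+r$ (so $\chi(\mathcal{L})=r$). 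Because $\chi(M_C\otimes\mathcal{L})=0$, the theta locus $\{\mathcal{L}\in\operatorname{Pic}^{g-1+r}(C):H^0(M_C\otimes\mathcal{L})\neq 0\}$ is either all of $\operatorname{Pic}^{g-1+r}(C)$ or a proper divisor, so ``$M_C$ admits a theta-divisor'' is exactly the assertion that some $\mathcal{L}$ satisfies (\ref{mvanishing}).

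Next I would pass to a general smooth $C\in|D|$. Restricting the tautological sequence $0\to M_X\to H^0(\mathcal{O}_X(1))\otimes\mathcal{O}_X\to\mathcal{O}_X(1)\to 0$ to $C$ identifies $M_X|_C=M_C$, and tensoring the resulting sequence by $\mathcal{L}$ shows that (\ref{mvanishing}) is equivalent to the multiplication maps $H^0(\mathcal{O}_X(1))\otimes H^i(\mathcal{L})\to H^i(\mathcal{L}\otimes\mathcal{O}_C(1))$ being isomorphisms for $i=0,1$; by the projection formula this is in turn equivalent to $H^i(X,M_X\otimes\iota_\ast\mathcal{L})=0$ for all $i$, where $\iota\colon C\hookrightarrow X$. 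This surface-level reformulation is the bridge I would use in both directions.

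For (ii)$\Rightarrow$(i) I would build $\mathcal{E}$ from the datum $\mathcal{L}$ by a Serre/Lazarsfeld--Mukai-type extension along $C$, producing a sheaf as an extension of $\iota_\ast$(an appropriate twist of $\mathcal{L}$) by a trivial or suitably twisted bundle, so that the output has rank $r=\chi(\mathcal{L})$ and $c_1=[C]=D$; here the vanishing (\ref{mvanishing}) is what guarantees that the extension class is nondegenerate along $C$, hence that $\mathcal{E}$ is locally free, and it is also what forces the Ulrich vanishings $H^i(\mathcal{E}(-1))=H^i(\mathcal{E}(-2))=0$ via the identification $H^i(M_C\otimes\mathcal{L})=H^i(X,M_X\otimes\iota_\ast\mathcal{L})$. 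For (i)$\Rightarrow$(ii) I would run this construction backwards: from an Ulrich $\mathcal{E}$ I would recover, on a general $C$, the corresponding $\mathcal{L}$ and deduce (\ref{mvanishing}) from the Ulrich vanishings through the same exact sequences, so that the theta-locus of $M_C$ is necessarily proper. Gieseker semistability of the constructed bundle (Proposition \ref{giesstab}) then places $\mathcal{E}$ in the expected moduli space.

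The hard part will be the construction in (ii)$\Rightarrow$(i): pinning down the precise twist of the line bundle on $C$ and the extension class so that the resulting sheaf is genuinely locally free of the correct rank and first Chern class, and then checking that all four groups $H^i(\mathcal{E}(-1))$ and $H^i(\mathcal{E}(-2))$ vanish. Matching these surface-level vanishings against the single curve-level condition (\ref{mvanishing}), and passing cleanly between a general member of $|D|$ and a fixed one by flatness and semicontinuity, is where the main technical effort lies; the theta vanishing enters precisely to supply the required cohomology and to exclude the degeneration that would prevent $\mathcal{E}$ from being a vector bundle.
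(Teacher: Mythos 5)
Your proposal follows essentially the same route as the paper: both implications are mediated by the elementary modification $0\to\mathcal{E}^{\vee}\to H^{0}(\mathcal{L})\otimes\mathcal{O}_{X}\to j_{\ast}\mathcal{L}\to 0$, the theta condition is translated into bijectivity of the multiplication map $H^{0}(\mathcal{O}_{X}(1))\otimes H^{0}(\mathcal{L})\to H^{0}(\mathcal{L}(1))$, and semicontinuity over the family of smooth members of $|D|$ (the content of Proposition \ref{thetaopen}) handles the passage from one curve to the general one. Two points of comparison. First, where you verify the Ulrich property by checking the finitely many vanishings $H^{i}(\mathcal{E}(-1))=H^{i}(\mathcal{E}(-2))=0$ via Proposition \ref{cliffdef}(v), the paper instead shows that $\mathcal{E}^{\vee}$ is ACM for \emph{every} twist by an induction on $t$ and then invokes the Chern-class characterization (Proposition \ref{chern}); your version is legitimate and somewhat shorter, since by Serre duality the relevant groups are $H^{2}(\mathcal{E}^{\vee})$, $H^{1}(\mathcal{E}^{\vee})$, $H^{1}(\mathcal{E}^{\vee}(1))$ and $H^{0}(\mathcal{E}^{\vee}(1))$, whose vanishing follows from nonspeciality of $\mathcal{L}$ and from (\ref{mvanishing}), respectively. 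Second, one attribution in your sketch is off: local freeness of $\ker(\rho)$ has nothing to do with (\ref{mvanishing}); it follows from $\mathcal{E}xt^{1}_{\mathcal{O}_{X}}(\ker(\rho),\mathcal{O}_{X})\cong\mathcal{E}xt^{2}_{\mathcal{O}_{X}}(j_{\ast}\mathcal{L},\mathcal{O}_{X})=0$, because $j_{\ast}\mathcal{L}$ is supported in codimension $1$. What you do need to make explicit is that a line bundle of degree $g-1+r$ can be chosen simultaneously nonspecial, basepoint-free, with $h^{0}(\mathcal{L}(-1))=0$, and outside $\Theta_{M_{C}}$; this is exactly Lemma \ref{bpfln} (a Brill--Noether/Martens argument) and is what makes $\rho$ a surjection from a trivial bundle of the correct rank $r=h^{0}(\mathcal{L})$.
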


A brief explanation of (ii) is in order.  If $C \subseteq \mathbb{P}^{d}$ is a smooth irreducible curve of degree $dr,$ then we may define a {\it theta-divisor} associated to $M_C$ as follows: 
\begin{equation}
\Theta_{M_C}:=\{\mathcal{L} \in \textnormal{Pic}^{g-1+r}(C): H^0(M_C \otimes \mathcal{L}) \neq 0\}
\end{equation}
This set is nonempty, and it is a naturally a determinantal locus in $\textnormal{Pic}^{g-1+r}(C)$ whose expected codimension is 1.  Its codimension is 1 precisely when it is a proper subset of $\textnormal{Pic}^{g-1+r}(C),$ i.e. when there exists $\mathcal{L} \in \textnormal{Pic}^{g-1+r}(C)$ satisfying (\ref{mvanishing}).  

Since (ii) implies the semistability of $M_C,$ we have the following immediate consequence of Theorem \ref{main}:

\begin{cor}
\label{curvess}
If $\mathcal{E}$ is an Ulrich bundle on $X_{d}$ and $C$ is a general smooth member of $|\det{\mathcal{E}}|$ which is not contained in a hyperplane, then $M_{C}$ is semistable.
\end{cor}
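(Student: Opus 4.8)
The plan is to read the statement off Theorem \ref{main} together with the classical principle that the existence of a (proper) theta-divisor forces semistability. Writing $r$ for the rank of $\mathcal{E}$ and $D := c_{1}(\mathcal{E}) = \det\mathcal{E}$, the bundle $\mathcal{E}$ witnesses condition (i) of Theorem \ref{main}, so the implication (i) $\Rightarrow$ (ii) gives, for a general smooth $C \in |D|$ not lying in a hyperplane, a line bundle $\mathcal{L} \in \textnormal{Pic}^{g-1+r}(C)$ with $H^{0}(M_{C} \otimes \mathcal{L}) = H^{1}(M_{C} \otimes \mathcal{L}) = 0$. I would next record the invariants of $M_{C}$: from the defining sequence $0 \to M_{C} \to H^{0}(\mathcal{O}_{X}(1)) \otimes \mathcal{O}_{C} \to \mathcal{O}_{C}(1) \to 0$, using that $C$ spans $\mathbb{P}^{d}$, the bundle $M_{C}$ has rank $d$, determinant $\mathcal{O}_{C}(-1)$, and hence slope $\mu(M_{C}) = -r$; tensoring with $\mathcal{L}$ then gives $\mu(M_{C} \otimes \mathcal{L}) = g-1$ and $\chi(M_{C} \otimes \mathcal{L}) = 0$.

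The heart of the proof is the contrapositive slope computation. Suppose $M_{C}$ is not semistable and let $F \subseteq M_{C}$ be a subsheaf with $\mu(F) > \mu(M_{C}) = -r$. Then $\mu(F \otimes \mathcal{L}) = \mu(F) + (g-1+r) > g-1$, so $\chi(F \otimes \mathcal{L}) = \deg(F \otimes \mathcal{L}) - \mathrm{rk}(F)(g-1) > 0$ and therefore $H^{0}(F \otimes \mathcal{L}) \neq 0$. As $F \otimes \mathcal{L}$ injects into $M_{C} \otimes \mathcal{L}$, this contradicts the vanishing $H^{0}(M_{C} \otimes \mathcal{L}) = 0$ coming from (\ref{mvanishing}). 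Hence $M_{C}$ is semistable. I note that only the single line bundle $\mathcal{L}$ produced by Theorem \ref{main} is used, rather than a generic vanishing over all of $\textnormal{Pic}^{g-1+r}(C)$.

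I do not expect a serious obstacle, because the substance is carried by Theorem \ref{main}, which I may assume. The points needing care are purely bookkeeping: verifying the rank and determinant of $M_{C}$ (which relies on $C$ spanning $\mathbb{P}^{d}$, guaranteed by the hypothesis that $C$ is not contained in a hyperplane), checking that the genericity clause of Theorem \ref{main}(ii) matches the ``general smooth member'' in the statement, and confirming that $\chi$ depends only on rank and degree so that passing from $\mu(F) > \mu(M_{C})$ to $\chi(F \otimes \mathcal{L}) > 0$ is legitimate for an arbitrary destabilizing subsheaf $F$.
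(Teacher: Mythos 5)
Your proposal is correct and follows the same route as the paper: the paper derives the corollary immediately from Theorem \ref{main} via the observation that condition (ii) (existence of a theta-divisor for $M_{C}$) implies semistability, and your contrapositive Euler-characteristic argument is exactly the standard justification of that implication, with the slope and rank bookkeeping for $M_{C}$ done correctly. No gap.
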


Corollary \ref{curvess} can be used together with known Ulrich bundles on $X_d$ to produce curves on $X_{d}$ whose kernel bundle is semistable.  One source of such examples is hypersurface sections of $X_d \subseteq \mathbb{P}^d.$  For each $r \geq 2$ and $3 \leq d \leq 7,$ there exist Ulrich bundles of rank $r$ on $X_d$ with first Chern class $rH$ (see Proposition \ref{hypclass}, as well as \cite{MP2}). So Corollary \ref{curvess} implies that $M_{C}$ is semistable whenever $C$ is a general complete intersection of type $(3,r)$ in $\mathbb{P}^{3}$ or of type $(2,2,r)$ in $\mathbb{P}^{4}$ for all $r \geq 2.$  As far as we know, this is new for $r \geq 5;$ the case $r \leq 4$ is covered by the Theorem on p.~1-2 of \cite{PaRa}.     

Theorem \ref{main} also has implications for the study of the moduli space ${\rm SU}_{C}(d)$ of semistable vector bundles on $C$ with rank $d$ and trivial determinant.  One can show that there is a rational map ${\theta}:{\rm SU}_{C}(d) \dashrightarrow |d{\Theta}|$ which takes a general $E \in {\rm SU}_{C}(d)$ to the divisor 
\begin{equation}
{\Theta}_{E}:=\{\mathcal{L} \in {\rm Pic}^{g-1}(C) : H^0(E \otimes \mathcal{L}) \neq 0\}
\end{equation}
which is linearly equivalent to $d{\Theta}.$  (Here, $\Theta$ denotes Riemann's theta divisor in ${\rm Pic}^{g-1}(C).$)  We refer to \cite{Bea1, Po} for details. A consequence of the main theorem related to this is the following corollary.
\begin{cor}
If $C \subseteq X_d$ is a smooth irreducible curve of degree $dr,$ then the following are equivalent:
\begin{itemize}
\item[(i)]{There exists an Ulrich bundle $\mathcal{E}$ of rank $r$ on $X_d$ with $c_1(\mathcal{E})=C.$}
\item[(ii)]{For all $\eta \in {\rm Pic}^{r}(C)$ satisfying ${\eta}^{\otimes d} \cong \mathcal{O}_{C}(1),$ we have that $\theta$ is well-defined at (the S-equivalence class of) $M_{C} \otimes \eta.$}
\item[(iii)]{For some $\eta \in {\rm Pic}^{r}(C)$ satisfying ${\eta}^{\otimes d} \cong \mathcal{O}_{C}(1),$ we have that $\theta$ is well-defined at (the S-equivalence class of) $M_{C} \otimes \eta.$} 
\end{itemize}
\end{cor}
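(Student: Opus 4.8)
The plan is to show that each of (ii) and (iii) is equivalent to the single, $\eta$-free statement that $M_C$ admits a theta-divisor in ${\rm Pic}^{g-1+r}(C)$, and then to identify that statement with (i) by means of Theorem \ref{main}. First I would record the numerics: from the defining sequence $0 \to M_C \to H^0(\mathcal{O}_X(1)) \otimes \mathcal{O}_C \to \mathcal{O}_C(1) \to 0$ one sees that $M_C$ has rank $d$ and $\det M_C \cong \mathcal{O}_C(-1)$, so that for any $\eta \in {\rm Pic}^r(C)$ with $\eta^{\otimes d} \cong \mathcal{O}_C(1)$ the twist $M_C \otimes \eta$ has rank $d$ and trivial determinant; once it is known to be semistable it represents a point of ${\rm SU}_C(d)$. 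At least one such $\eta$ exists because $\deg \mathcal{O}_C(1) = dr$ is divisible by $d$ and the $d$-th power map is surjective on the Jacobian, and any two admissible choices differ by a $d$-torsion line bundle.

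The crux is a translation identity. For $\mathcal{N} \in {\rm Pic}^{g-1}(C)$ we have $H^0((M_C \otimes \eta) \otimes \mathcal{N}) = H^0(M_C \otimes (\eta \otimes \mathcal{N}))$ with $\deg(\eta \otimes \mathcal{N}) = g-1+r$, so the isomorphism ${\rm Pic}^{g-1}(C) \to {\rm Pic}^{g-1+r}(C)$, $\mathcal{N} \mapsto \eta \otimes \mathcal{N}$, carries $\Theta_{M_C \otimes \eta}$ onto $\Theta_{M_C}$. Hence $\Theta_{M_C \otimes \eta}$ is a proper divisor of ${\rm Pic}^{g-1}(C)$ if and only if $\Theta_{M_C}$ is a proper divisor of ${\rm Pic}^{g-1+r}(C)$, i.e.\ if and only if $M_C$ admits a theta-divisor --- a condition not involving $\eta$. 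Moreover, admitting a theta-divisor forces the semistability of $M_C$ (the implication underlying Corollary \ref{curvess}), and semistability is preserved under twisting, so $M_C \otimes \eta$ is then automatically a point of ${\rm SU}_C(d)$. Since ``$\theta$ is well-defined at the $S$-equivalence class of $M_C \otimes \eta$'' means precisely that $M_C \otimes \eta$ is semistable and $\Theta_{M_C \otimes \eta}$ is proper, we conclude, uniformly in $\eta$, that it holds if and only if $M_C$ admits a theta-divisor. As this last condition is independent of $\eta$ while at least one admissible $\eta$ exists, it holds for every admissible $\eta$ iff it holds for some admissible $\eta$; this is the equivalence of (ii) and (iii).

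It remains to match ``$M_C$ admits a theta-divisor'' with (i), which is the content of Theorem \ref{main} for the class of $C$. The point requiring care --- and the main obstacle --- is that Theorem \ref{main} formulates its criterion for the general smooth member of $|C|$, whereas (ii) and (iii) concern the fixed curve $C$. I would bridge this in two steps. If the fixed $C$ admits a theta-divisor, then the locus of smooth members $C' \in |C|$ for which $\Theta_{M_{C'}} \neq {\rm Pic}^{g-1+r}(C')$ is open (its complement, where a relative theta-section vanishes identically, is closed by semicontinuity) and nonempty, hence dense; it therefore contains the general member, and Theorem \ref{main} yields (i). Conversely, given an Ulrich bundle $\mathcal{E}$ with $c_1(\mathcal{E}) = C$, I would run the construction underlying the implication (i)$\Rightarrow$(ii) of Theorem \ref{main} directly on the fixed smooth curve $C$, producing a line bundle on $C$ that satisfies (\ref{mvanishing}); verifying that this construction persists for the fixed --- and not merely the general --- smooth member is the delicate point. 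Assembling these equivalences gives (i) $\Leftrightarrow$ (ii) $\Leftrightarrow$ (iii).
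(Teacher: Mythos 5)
Your reduction of (ii) and (iii) to the single $\eta$-independent condition ``$M_C$ admits a theta-divisor in ${\rm Pic}^{g-1+r}(C)$'' is correct and complete: the computation $\det(M_C\otimes\eta)\cong\mathcal{O}_C(-1)\otimes\eta^{\otimes d}\cong\mathcal{O}_C$, the translation identifying $\Theta_{M_C\otimes\eta}\subseteq{\rm Pic}^{g-1}(C)$ with $\Theta_{M_C}\subseteq{\rm Pic}^{g-1+r}(C)$, and the observation that admitting a theta-divisor already forces semistability (so membership in ${\rm SU}_C(d)$ comes for free) are exactly the points one needs, and they give (ii)$\Leftrightarrow$(iii) immediately. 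For the implication from this condition to (i) you do not need the detour through the general member of $|C|$ and Proposition \ref{thetaopen}: Theorem \ref{mainprop2} is stated for a \emph{fixed} smooth irreducible curve $C$ and produces the Ulrich bundle with $c_1(\mathcal{E})=C$ directly. This matches the route the paper intends, since the corollary is presented as a direct consequence of Theorem \ref{main} with no separate argument.

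The genuine gap is the direction (i)$\Rightarrow$(ii), which you flag as ``the delicate point'' but do not close. Theorem \ref{mainprop1} produces the required line bundle only on the curve arising as the degeneracy locus of $r$ general sections of $\mathcal{E}$, and Proposition \ref{thetaopen} then extends the conclusion to a Zariski-open subset of $|{\det\mathcal{E}}|$ containing that curve --- not to every smooth member. One cannot simply ``run the construction on the fixed $C$'': for that one would need an $r$-dimensional subspace $V\subseteq H^0(\mathcal{E})$ whose evaluation $V\otimes\mathcal{O}_X\to\mathcal{E}$ degenerates exactly along the given $C$, and a dimension count shows this can fail. For instance, with $d=3$, $r=2$, $c_1(\mathcal{E})=2H$, the Grassmannian ${\rm Gr}(2,H^0(\mathcal{E}))$ has dimension $8$ while $|2H|$ has dimension $9$, so the degeneracy loci of sections of a fixed $\mathcal{E}$ sweep out a proper subvariety of $|2H|$; even allowing $\mathcal{E}$ to vary, nothing in the paper's results guarantees that the prescribed smooth $C$ lies in the open set where the theta-divisor exists. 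So as written your argument proves (ii)$\Leftrightarrow$(iii)$\Rightarrow$(i) but only ``(i) $\Rightarrow$ (ii) for general $C$ in the class,'' and an additional idea is needed to upgrade this to the fixed curve $C$ appearing in the statement.
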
  

In particular, if $C \subseteq X_d$ is a smooth irreducible curve of degree $dr$ and genus $g$ whose kernel bundle $M_C$ is semistable, and $C$ is \textit{not} the first Chern class of a rank-$r$ Ulrich bundle on $X_d,$ then the rational map $\theta$ has a base point at $M_{C} \otimes \eta$ for each $d$-th root $\eta$ of $\mathcal{O}_{C}(1).$  This will be explored further in future work. 

The semistability of $M_{C}$ was first studied in \cite{PaRa} with a view to Green's Conjecture on the syzygies of canonically embedded curves.  The connection stems from the fact that the syzygies of a smooth curve $C \subseteq \mathbb{P}^{n}$ admit a natural cohomological description in terms of the exterior powers of $M_C$ (see \cite{Laz} or (2.3.4) on p.~1-13 of \cite{PaRa}).  The stronger condition that $M_C$ admit a theta-divisor is related to a subtler aspect of the embedding of $C$.  

The Minimal Resolution Conjecture (MRC) for a subvariety $Y \subseteq \mathbb{P}^{N}$ implies that the minimal graded free resolution of a collection $\Gamma$ of sufficiently many general points on $Y$  is completely determined by that of $Y$ (see Section \ref{relminres} for a precise statement).  The case of a smooth curve $C \subseteq \mathbb{P}^{n}$ was studied by Farkas, Musta\c{t}\v{a} and Popa in \cite{FMP}, where they showed that MRC holds for canonically embedded curves.  An important part of their proof is the fact (stated here as Proposition \ref{fmpmrc}) that MRC holds for a curve $C \subseteq \mathbb{P}^{n}$ precisely when $M_{C}$ and its exterior powers admit theta-divisors in the appropriate Picard varieties.  We use this to obtain the following:

\begin{cor}
\label{minrescor}
Let $D$ be an effective divisor of degree $3r \geq 3$ on a smooth cubic surface $X_3 \subseteq \mathbb{P}^{3}.$  Then there exists an Ulrich bundle of rank $r$ on $X_3$ with first Chern class $D$ if and only if MRC holds for the general smooth curve $C$ in the linear system $|D|.$
\end{cor}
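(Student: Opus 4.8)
The plan is to translate both sides of the equivalence into the language of theta-divisors and then exploit the fact that the kernel bundle of a space curve has rank $3$. First I would record the hypotheses needed to invoke Theorem \ref{main} with $d=3$: by Bertini together with the positivity of $D$ on $X_3$, the general smooth member $C \in |D|$ is connected, and for $\deg D = 3r > 3$ it is non-degenerate in $\mathbb{P}^3$. With these in hand, Theorem \ref{main} says that condition (i)---the existence of a rank-$r$ Ulrich bundle with $c_1 = D$---is equivalent to the assertion that $M_C$ admits a theta-divisor in $\textnormal{Pic}^{g-1+r}(C)$.

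Next I would invoke Proposition \ref{fmpmrc}: MRC holds for $C$ precisely when $M_C$ and all of its exterior powers admit theta-divisors in the appropriate Picard varieties. The key structural observation is that, since $C \subseteq \mathbb{P}^3$, the bundle $M_C$ has rank $3$, so the only exterior powers in play are $M_C$, $\bigwedge^2 M_C$, and $\bigwedge^3 M_C = \det M_C$. The top exterior power $\det M_C$ is a line bundle, and a line bundle always admits a theta-divisor, its theta-divisor being a translate of Riemann's theta-divisor and hence a proper subvariety. Thus the FMP criterion for $C$ collapses to the single requirement that $M_C$ and $\bigwedge^2 M_C$ both admit theta-divisors.

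The heart of the argument is then the identification $\bigwedge^2 M_C \cong M_C^{\vee} \otimes \det M_C$, valid for any rank-$3$ bundle. I would establish two elementary invariance properties of the property ``admits a theta-divisor'': first, it is preserved under tensoring by a line bundle, since this merely reparametrizes the Picard variety in which one searches for the vanishing twist; second, it is preserved under dualization via Serre duality, because if $\chi(E \otimes L) = 0$ with $H^0(E \otimes L) = H^1(E \otimes L) = 0$, then setting $L' := K_C \otimes L^{-1}$ gives $\chi(E^{\vee} \otimes L') = 0$ and $H^0(E^{\vee} \otimes L') = H^1(E^{\vee} \otimes L') = 0$. Combining these, $\bigwedge^2 M_C$ admits a theta-divisor if and only if $M_C$ does. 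Hence the FMP criterion is satisfied precisely when $M_C$ admits a theta-divisor, which by Theorem \ref{main} is exactly condition (i); this chain of equivalences yields the corollary.

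I expect the main obstacle to be bookkeeping rather than conceptual: verifying that the general member $C$ of $|D|$ genuinely satisfies the smoothness, connectedness, and especially non-degeneracy hypotheses of Theorem \ref{main}, particularly in the borderline case $r=1$, where $\deg C = 3$ and $C$ could a priori lie in a hyperplane (e.g.\ as a plane cubic). I would handle this by examining the relevant divisor classes on $X_3$ directly, and by confirming that the degree and rank numerics align so that the $i=1$ theta-divisor appearing in the FMP criterion lives in exactly the same $\textnormal{Pic}^{g-1+r}(C)$ that occurs in Theorem \ref{main}.
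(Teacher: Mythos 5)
Your proposal is correct and its core is the same as the paper's: combine Theorem \ref{main} (with $d=3$, so that ``admits a theta-divisor in $\textnormal{Pic}^{g-1+r}(C)$'' is exactly the Ulrich condition) with the Farkas--Musta\c{t}\v{a}--Popa criterion. The one genuine difference is in how the exterior powers of $M_C$ are dispatched. The paper's route is immediate because Proposition \ref{fmpmrc} is already stated with the restriction $i \leq n/2$; for $n=3$ this leaves only $i=1$, so the FMP criterion \emph{is} the theta-divisor condition on $M_C$ itself and nothing further is needed. You instead start from the cruder form of the criterion (``$M_C$ and all its exterior powers admit theta-divisors'') and then reduce $\bigwedge^2 M_C \cong M_C^{\vee}\otimes\det M_C$ to $M_C$ by hand, using that the theta-divisor property is stable under twisting by a line bundle and under dualization via Serre duality, and that $\bigwedge^3 M_C=\mathcal{O}_C(-1)$ trivially admits one. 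This argument is correct (the degree bookkeeping $g-1+2r$ versus $g-1+r$ checks out), and it amounts to re-deriving precisely the $i \leftrightarrow n-i$ symmetry that lets FMP restrict to $i\le n/2$ in the first place; it buys you self-containedness at the cost of a step the cited statement already absorbs. Your worry about the borderline case is well placed: for $r=1$ the class $D=H$ has smooth members that are plane cubics, which violate the non-degeneracy hypothesis of Theorem \ref{main}, so that case does need a separate word (e.g.\ $H^2=3\neq 1$ rules out an Ulrich line bundle with $c_1=H$ by Proposition \ref{chern}, and $M_C$ then has a trivial summand forcing $H^0(M_C\otimes\xi)\neq 0$, so both sides of the equivalence fail); for $r\ge 2$ degeneracy is impossible since a plane meets $X_3$ in a curve of degree $3<3r$. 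The paper itself elides this point, so your flagging it is a small improvement rather than a defect.
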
 

It should be noted that a slightly different version of MRC for points on $X_d$ plays a major role in much of the recent work on Ulrich bundles on $X_d$ \cite{CH,MP1,MP2}.

One may ask whether Corollary \ref{minrescor} generalizes to $X_d$ for $4 \leq d \leq 9.$  To cleanly state the result we have obtained in this direction, we define the \textit{Ulrich semigroup} of a smooth projective variety $X$ in $\mathbb{P}^n$ to be
$$\mathfrak{Ulr}(X):=\{ D \in {\rm Pic}(X) : \textnormal{there exists an Ulrich bundle }\mathcal{E} \ \textnormal{on} \ X \ \textnormal{s.~t.} \ c_1(\mathcal{E})=D\}$$
The embedding of $X$ is suppressed in our notation since it will always be clear from context.  The extension of one Ulrich bundle by another is Ulrich (Proposition \ref{extcliff}), so $\mathfrak{Ulr}(X)$ is a sub-semigroup of ${\rm Pic}(X)$ if it is nonempty.

\begin{thm}
\label{mrchyp}
Suppose that for each generator $Q$ of $\mathfrak{Ulr}(X_d)$, MRC holds for a general smooth curve in the linear system $|Q|.$  

Then for an effective divisor  $D \subseteq X_d \subseteq \mathbb{P}^{d}$ which does not lie in a hyperplane, the following are equivalent:
\begin{itemize}
\item[(i)]{There exists an Ulrich bundle of rank $r$ on $X_d$ with $c_{1}(\mathcal{E})=D.$}
\item[(ii)]{The degree of $D$ is $dr,$ and MRC holds for a general smooth curve in the linear system $|D|.$}
\end{itemize} 
\end{thm}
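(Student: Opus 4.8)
The plan is to translate both conditions into the language of generalized theta-divisors and then to transport the hypothesis from the generators of $\mathfrak{Ulr}(X_d)$ to an arbitrary class $D$ in the semigroup. The two tools driving this are Theorem \ref{main}, which governs the kernel bundle $M_C$ itself, and the Farkas--Musta\c{t}\v{a}--Popa criterion (Proposition \ref{fmpmrc}), according to which MRC for a smooth curve $C$ is equivalent to the statement that $M_C$ together with all of its exterior powers $\wedge^{i}M_C$ admit theta-divisors in the appropriate Picard varieties.

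I would first dispose of (ii) $\Rightarrow$ (i), which needs neither the semigroup structure nor the standing hypothesis on generators. Assume $\deg D = dr$ and that MRC holds for a general smooth $C \in |D|$. By Proposition \ref{fmpmrc} every $\wedge^{i}M_C$ then admits a theta-divisor; taking $i=1$ shows in particular that $M_C=\wedge^{1}M_C$ admits one. As $C$ is smooth, connected, of the correct degree, and not contained in a hyperplane (since $D$ is not), Theorem \ref{main} produces an Ulrich bundle $\mathcal{E}$ of rank $r$ on $X_d$ with $c_1(\mathcal{E})=D$.

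For the reverse implication (i) $\Rightarrow$ (ii), let $\mathcal{E}$ be an Ulrich bundle of rank $r$ with $c_1(\mathcal{E})=D$; applying Theorem \ref{main} to a general smooth $C\in|D|$ (whose class is $D$) already yields both $\deg D=dr$ and a theta-divisor for $M_C$, so by Proposition \ref{fmpmrc} it remains only to exhibit theta-divisors for the higher exterior powers $\wedge^{i}M_C$, $i \geq 2$. Here the hypothesis enters through the semigroup structure: since $c_1(\mathcal{E})=D$ we have $D \in \mathfrak{Ulr}(X_d)$, so we may write $D=Q_1+\cdots+Q_m$ with each $Q_j$ a generator, and for each $j$ the hypothesis together with Proposition \ref{fmpmrc} guarantees that $M_{C_j}$ and all its exterior powers admit theta-divisors for a general smooth $C_j \in |Q_j|$. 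The key structural point is that, for each fixed $i$, the condition ``$\wedge^{i}M_{C'}$ admits a theta-divisor'' is open as $[C']$ varies in the irreducible system $|D|$: the locus $\Theta_{\wedge^{i}M_{C'}}$ is determinantal, and it fails to be a proper subvariety precisely when a section vanishes identically, a closed condition on the parameter. It therefore suffices to verify these finitely many conditions at a single, possibly reducible, member of $|D|$, and for this I would take $C_0=C_1\cup\cdots\cup C_m$ with the $C_j \in |Q_j|$ general and meeting transversally, which lies in $|D|$ since its class is $\sum_j Q_j = D$.

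The decisive step, and the one I expect to be the main obstacle, is to make sense of $\wedge^{i}M_{C_0}$ and its generalized theta-divisor on the reducible nodal curve $C_0$ and to deduce properness of $\Theta_{\wedge^{i}M_{C_0}}$ from the properness of each $\Theta_{\wedge^{i}M_{C_j}}$ on the components. This requires replacing line bundles by rank-one torsion-free sheaves and working on the compactified Jacobian of $C_0$, relating the kernel sheaf on $C_0$ and its exterior powers to the bundles $M_{C_j}$ on the normalizations via the partial normalization sequences at the nodes. Once this comparison is in hand, the component-wise conditions supplied by the hypothesis show that $C_0$ satisfies all the exterior-power theta-divisor conditions; openness then propagates them to the general smooth member of the irreducible system $|D|$, and Proposition \ref{fmpmrc} upgrades the resulting collection of theta-divisors to MRC for that general $C$, completing (i) $\Rightarrow$ (ii).
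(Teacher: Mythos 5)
Your overall strategy coincides with the paper's: the direction (ii) $\Rightarrow$ (i) is exactly the combination of Proposition \ref{fmpmrc} (take $i=1$) with Theorem \ref{main}, and for (i) $\Rightarrow$ (ii) the paper likewise writes $D$ as a sum of generators and degenerates a general smooth member of $|D|$ to a transverse union of general smooth curves in the generator classes (this is Proposition \ref{mrcsum}, done two summands at a time by induction). But the step you yourself flag as ``the decisive step'' is precisely the content of that proposition, and your sketch of it has a genuine gap. If $C_0=C_1\cup\cdots\cup C_m$ is the nodal union with $\delta$ nodes, its arithmetic genus is $g=\sum_j g_j + \delta - (m-1)$, so a collection of line bundles $L_j\in\textnormal{Pic}^{g_j-1+r_ji}(C_j)$ glues to a line bundle on $C_0$ of total degree $\sum_j(g_j-1+r_ji)=g-1+ri-\delta$, which is \emph{not} the degree $g-1+ri$ at which the theta-divisor for $\wedge^iM_{C_0}$ must live. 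So one cannot simply import the component-wise vanishings $H^0(\wedge^iM_{C_j}\otimes L_j)=0$ into a statement about $\Theta_{\wedge^iM_{C_0}}$ on (a compactified Jacobian of) $C_0$; the $\delta$ missing units of degree have to be accounted for. The paper resolves this by blowing up the nodes so that the central fiber of the degenerating family is the semistable curve $\widetilde{C_1}\cup\cdots\cup\widetilde{C_m}\cup E_1\cup\cdots\cup E_\delta$, placing $\mathcal{O}_{E_j}(1)$ on each exceptional component to restore the degree, and computing $H^0$ on the central fiber via the sequence $0\to\oplus_j\mathcal{O}_{E_j}(-2)\to\mathcal{O}_{C_0}\to\oplus_k\mathcal{O}_{\widetilde{C_k}}\to 0$ twisted by the (pulled-back) kernel bundle. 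Your alternative of working with rank-one torsion-free sheaves on the compactified Jacobian of $C_0$ might be made to work, but it is not carried out, and the degree bookkeeping above shows it is not a routine translation.

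A second, smaller issue: your claim that the theta-divisor condition is ``open as $[C']$ varies in $|D|$'' and hence can be checked at a single reducible member conflates two different statements. Proposition \ref{thetaopen} gives openness only among \emph{smooth} members of the linear system. To pass from the singular curve $C_0$ to nearby smooth curves one needs semicontinuity of $h^0$ in an explicit one-parameter family (a pencil through $C_0$ with smooth general member, together with a line bundle on the total space restricting correctly on the central fiber); only after producing one smooth curve with the vanishing does Proposition \ref{thetaopen} spread the conclusion to the general smooth member of $|D|$. This is exactly the order of operations in the paper's proof of Proposition \ref{mrcsum}, and it is the part your proposal leaves unproved.
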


Since MRC holds for any rational normal curve (Lemma \ref{ratnorm}), the hypothesis of Theorem \ref{mrchyp} is satisfied if $\mathfrak{Ulr}(X_d)$ is generated by classes of rational normal curves of degree $d.$  Theorem 3.9 of \cite{CH1} implies that this holds for $d=3,$ so Theorem \ref{mrchyp} yields a (rather convoluted) alternate proof of Corollary \ref{minrescor}.  

We round out this discussion with a brief overview of the case $d=9$, where the structure of the Ulrich semigroup is particularly simple.  Recall that the degree-9 del Pezzo surface $X_9$ is just the $3-$uple Veronese surface.  It admits a rank-2 Ulrich bundle with first Chern class $2H,$ and the symmetric square $\mathbf{S}^{2}T_{\mathbb{P}^2}$ of the tangent bundle of $\mathbb{P}^{2}$ is a stable rank-3 Ulrich bundle (Corollary 5.7 of \cite{ESW}).  It follows that $\mathfrak{Ulr}(X_9)$ is generated by $2H$ and $3H.$

Any smooth curve on $X_9$ which is a member of the linear system $|2H|$ is a nonhyperelliptic canonically embedded curve of genus 10, so MRC holds for such a curve, thanks to the main theorem of \cite{FMP}.  Consequently, all that is needed for $X_9$ to satisfy the hypotheses of Theorem \ref{mrchyp} is for MRC to hold for the general smooth member of $|3H|.$  Curves of the latter type are half-canonical curves of genus 28, and it is still not known whether MRC holds for them.

Given that the theory of maximal Cohen-Macaulay modules over Gorenstein rings is particularly robust \cite{Buch}, it is natural to ask whether Theorem \ref{main} generalizes to arithmetically Gorenstein surfaces that are not del Pezzo.  Proposition \ref{onlydelpezzo} answers this negatively in the following sense: if $C$ is a smooth irreducible curve of genus $g$ on a smooth arithmetically Gorenstein surface $X,$ then roughly speaking, a line bundle of degree $g-1+r$ on $C$ can give rise to an Ulrich bundle of rank $r$ on $X$ only if $X$ is a del Pezzo or a quadric surface.  Moreover, every curve on a quadric surface $Q$ represents the first Chern class of an Ulrich bundle on $Q$ (Proposition \ref{everycurve}), so there is no analogue of Theorem \ref{main} in the quadric surface case either. 

Finally, while the present article has little to say about stable Ulrich bundles, we will investigate in future work the extent to which stability of Ulrich bundles is reflected in the geometry of the associated theta-divisors.

\bigskip

\textbf{Acknowledgments:} The second author was partially supported by the NSF grants DMS-0603684 and DMS-1004306, and the third author was partially supported by the NSF grant RTG DMS-0502170.  We are grateful to D. Eisenbud, D. Faenzi, G. Farkas, R. Hartshorne, R. Lazarsfeld, and M. Popa for valuable discussions and correspondence related to this work, and to R. Mir\'{o}-Roig and J. Pons-Llopis for sharing their preprints \cite{MP1} and \cite{MP2} with us.   

\bigskip

\textbf{Conventions:}  We work over an algebraically closed field of characteristic zero.  All del Pezzo surfaces are of degree $d \geq 3$ and are embedded in $\mathbb{P}^{d}$ by their anticanonical series.

\bigskip

\section{Generalities on Ulrich Bundles}
\label{gencliffbun}

\subsection{First Properties}  

This subsection contains a summary of general properties of Ulrich bundles which will be used in the sequel.  While some of the proofs appear in \cite{CKM1}, we reproduce them here for the reader's convenience.  

Throughout this subsection, $X \subseteq \mathbb{P}^{n}={\rm Proj}(S)$ is a smooth projective variety of degree $d \geq 2$ and dimension $k \geq 1,$ and $\iota: X \hookrightarrow \mathbb{P}^{n}$ denotes inclusion.  

We begin by restating the definition given at the beginning of the Introduction.

\begin{defn}
A vector bundle $\mathcal{E}$ on $X$ is an \textit{Ulrich bundle} if the graded module $\Gamma_{\ast}(\mathcal{E})$ is an Ulrich $S-$module.
\end{defn}

The following characterization of Ulrich bundles appears as Proposition 2.1 in \cite{ESW}; we state it without proof.
 
\begin{prop}
\label{cliffdef}
Let $\mathcal{E}$ be a vector bundle of rank $r$ on $X.$  Then the following are equivalent:
\begin{itemize}
\item[(i)]{$\mathcal{E}$ is Ulrich.}
\item[(ii)]{$\iota_{\ast}\mathcal{E}$ admits a minimal free resolution of the form
\begin{equation}
\label{linres}
0 \leftarrow \iota_{\ast}\mathcal{E} \leftarrow \mathcal{O}_{\mathbb{P}^{n}}^{\beta_{0}} \xleftarrow{\phi_{0}} \mathcal{O}_{\mathbb{P}^{n}}(-1)^{\beta_{1}} \xleftarrow{\phi_{1}} \cdots \xleftarrow{\phi_{n-k-1}} \mathcal{O}_{\mathbb{P}^{n}}(-(n-k))^{\beta_{n-k}} \leftarrow 0
\end{equation} where ${\beta}_i = dr\binom{n-k}{i}.$}
\item[(iii)]{For all linear projections $\pi:X \rightarrow \mathbb{P}^{k},$ we have $\pi_{\ast}\mathcal{E} \cong \mathcal{O}_{\mathbb{P}^{k}}^{dr}.$}
\item[(iv)]{For some linear projection $\pi:X \rightarrow \mathbb{P}^{k}$ we have $\pi_{\ast}\mathcal{E} \cong \mathcal{O}_{\mathbb{P}^{k}}^{dr}.$ \hfill \qedsymbol}
\item[(v)]{$H^i(\mathcal{E}(-i))=0$ for $i > 0$ and $H^j(\mathcal{E}(-j-1))=0$ for $j < k.$}
\end{itemize}
\end{prop}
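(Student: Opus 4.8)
The plan is to route all five conditions through a single geometric device: a general finite linear projection $\pi \colon X \to \mathbb{P}^{k}$ and the Noether normalization it induces. First I would fix general linear forms $\ell_{0}, \dots, \ell_{k} \in H^{0}(\mathcal{O}_{\mathbb{P}^{n}}(1))$, so that the center of projection misses $X$ and $\pi$ is finite of degree $d = \deg X$; set $A = \Bbbk[\ell_{0}, \dots, \ell_{k}] \subseteq S$, a polynomial ring in $k+1$ variables over which $S/I_{X}$ is finite. Writing $M = \Gamma_{\ast}(\mathcal{E})$, the two facts I would record at the outset are: (a) since $\pi$ is finite, $R^{>0}\pi_{\ast} = 0$ and $\pi^{\ast}\mathcal{O}_{\mathbb{P}^{k}}(1) = \mathcal{O}_{X}(1)$, so $H^{i}(\mathbb{P}^{k}, (\pi_{\ast}\mathcal{E})(m)) = H^{i}(X, \mathcal{E}(m))$ for all $i, m$ and $\Gamma_{\ast}(\pi_{\ast}\mathcal{E}) = M$ as a graded $A$-module; and (b) by miracle flatness $\pi$ is flat, so $\pi_{\ast}\mathcal{E}$ is a vector bundle on $\mathbb{P}^{k}$ of rank $dr$. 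Every condition will be read off from the splitting behavior of this bundle.

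For $(i) \Leftrightarrow (iii) \Leftrightarrow (iv)$ I would argue as follows. Both $(i)$ and $(iv)$ force $\mathcal{E}$ to be ACM --- for $(iv)$ this is immediate from (a) and $\pi_{\ast}\mathcal{E} \cong \mathcal{O}_{\mathbb{P}^{k}}^{dr}$ --- so in either case $M$ is a maximal Cohen--Macaulay $S$-module, hence a maximal Cohen--Macaulay (therefore free, by regularity of $A$) graded $A$-module $M \cong \oplus_{j} A(-b_{j})$, the number of summands being $\mathrm{rank}_{A} M = e(M) = dr$. Computing Hilbert series gives $\mathrm{HS}_{M}(t) = (\sum_{j} t^{b_{j}})/(1-t)^{k+1}$, while a linear (Ulrich) resolution forces $\mathrm{HS}_{M}(t) = dr/(1-t)^{k+1}$; comparing, $M$ is Ulrich precisely when every $b_{j} = 0$, i.e. $M \cong A^{dr}$. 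The reverse implication, that $M \cong A^{dr}$ is Ulrich, follows from $\mu_{S}(M) \ge \dim M_{0} = dr$ together with the bound $\mu_{S}(M) \le e(M) = dr$ for maximal Cohen--Macaulay modules. By (a) the condition $M \cong A^{dr}$ is exactly $\pi_{\ast}\mathcal{E} \cong \mathcal{O}_{\mathbb{P}^{k}}^{dr}$, giving $(i) \Leftrightarrow (iv)$; since being Ulrich is intrinsic to $\mathcal{E}$ and the same computation applies to \emph{any} finite linear projection, "for some" upgrades to "for all," yielding $(iii)$.

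The equivalence $(i) \Leftrightarrow (ii)$ is the definition read sheaf-theoretically. If $M$ is Ulrich, its minimal graded $S$-resolution is linear of length $n-k$ (Auslander--Buchsbaum, since $\mathrm{depth}\, M = k+1$); matching $\sum_{i} (-1)^{i} \beta_{i} t^{i} = dr(1-t)^{n-k}$ against $\mathrm{HS}_{M}(t) = dr/(1-t)^{k+1}$ gives $\beta_{i} = dr\binom{n-k}{i}$. Sheafifying this exact complex over $\mathbb{P}^{n}$, using $\widetilde{S(-j)} = \mathcal{O}_{\mathbb{P}^{n}}(-j)$ and $\widetilde{M} = \iota_{\ast}\mathcal{E}$, produces the resolution in $(ii)$. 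Conversely, a linear sheaf resolution as in $(ii)$ shows that $\iota_{\ast}\mathcal{E}$ is $0$-regular with the stated Betti numbers, and applying $\Gamma_{\ast}$ recovers that the minimal $S$-resolution of $M$ is linear.

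The subtlest point, and the one I expect to be the main obstacle, is $(v)$. I would rephrase its two families of vanishings through Castelnuovo--Mumford regularity and Serre duality: the conditions $H^{i}(\mathcal{E}(-i)) = 0$ for $i > 0$ say exactly that $\mathcal{E}$ is $0$-regular, while $H^{j}(\mathcal{E}(-j-1)) = 0$ for $j < k$ translate, via $H^{j}(\mathcal{E}(-j-1)) \cong H^{k-j}(\mathcal{E}^{\vee} \otimes \omega_{X}(j+1))^{\ast}$, into the statement that $\mathcal{G} := \mathcal{E}^{\vee} \otimes \omega_{X}$ is $(k+1)$-regular. By Mumford's lemma the first gives $H^{i}(\mathcal{E}(m)) = 0$ for $m \ge -i$, and the second gives $H^{k-i}(\mathcal{G}(s)) = 0$ for $s \ge i+1$, whence by Serre duality $H^{i}(\mathcal{E}(m)) = 0$ for $m \le -i-1$; for each intermediate $i$ these two ranges already exhaust all of $\mathbb{Z}$, so $(v)$ forces the full vanishing $H^{i}(\mathcal{E}(m)) = 0$ for $0 < i < k$ and all $m$. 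By Horrocks' criterion $\pi_{\ast}\mathcal{E}$ is then a direct sum of line bundles on $\mathbb{P}^{k}$, and the two boundary conditions $H^{0}(\mathcal{E}(-1)) = H^{k}(\mathcal{E}(-k)) = 0$ pin the splitting type down to $\mathcal{O}_{\mathbb{P}^{k}}^{dr}$, which is $(iv)$; the converse $(iv) \Rightarrow (v)$ is immediate by computing the cohomology of $\mathcal{O}_{\mathbb{P}^{k}}^{dr}$ through (a). The real work here is the bookkeeping showing that the \emph{finite} list of vanishings in $(v)$ propagates --- upward by regularity, downward by duality --- to vanishing in \emph{every} twist; that is the crux of the proposition.
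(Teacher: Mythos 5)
The paper does not prove this proposition at all: it is quoted verbatim from Proposition 2.1 of Eisenbud--Schreyer--Weyman \cite{ESW} and explicitly stated ``without proof,'' so there is no in-paper argument to compare yours against. Judged on its own terms, your proof is correct and follows the classical route (which is also essentially the one in \cite{ESW} and the Ulrich-module literature): a general finite linear projection $\pi$, freeness of $M=\Gamma_{\ast}(\mathcal{E})$ over the Noether normalization $A$, Hilbert-series bookkeeping for the Betti numbers, and Horrocks plus Castelnuovo--Mumford regularity and Serre duality to handle (v). The regularity/duality analysis of (v) --- the part you rightly identify as the crux --- is done carefully and the index bookkeeping checks out: $0$-regularity of $\mathcal{E}$ kills $H^{i}(\mathcal{E}(m))$ for $m\ge -i$, the dual $(k+1)$-regularity of $\mathcal{E}^{\vee}\otimes\omega_{X}$ kills $m\le -i-1$, and the two boundary vanishings $H^{0}(\mathcal{E}(-1))=H^{k}(\mathcal{E}(-k))=0$ force the split bundle $\pi_{\ast}\mathcal{E}$ to be trivial.

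Two steps are stated more briskly than they deserve, though neither is a gap. First, your passage from $M\cong A^{dr}$ to ``$M$ is Ulrich'' via $\mu_{S}(M)=e(M)$ silently invokes the theorem that a graded MCM module generated in a single degree with $\mu=e$ has a linear $S$-resolution (this is the content of \cite{BHU2, BHS}, which the paper cites as background); a more self-contained alternative is to note that $\operatorname{reg}_{S}M=\operatorname{reg}_{A}A^{dr}=0$ forces $b_{i,j}=0$ for $j>i$, while the absence of negative-degree elements forces $b_{i,j}=0$ for $j<i$. Second, in (ii)$\Rightarrow$(i) the claim that applying $\Gamma_{\ast}$ to the sheaf resolution recovers an exact, minimal resolution of $M$ needs the standard chase through the short exact sequences of syzygy sheaves (using that $H^{1}_{\ast}$ of each kernel vanishes because the later terms are sums of line bundles on $\mathbb{P}^{n}$); it would be worth one sentence. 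With those two points made explicit, the argument is complete.
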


The next result is particularly important to the proof of Theorem \ref{main}.  

\begin{prop}
\label{ulrichchar}
Let $\mathcal{E}$ be a vector bundle of rank $r$ on $X,$ where $X$ is of dimension $k \geq 2.$  Then $\mathcal{E}$ is Ulrich if and only if it is ACM with Hilbert polynomial $dr\binom{t+k}{k}.$
\end{prop}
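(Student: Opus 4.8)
The plan is to prove each implication through the cohomological characterization of Ulrich bundles in part (v) of Proposition \ref{cliffdef}, together with the standard fact that a vector bundle $\mathcal{E}$ on $X$ is ACM if and only if its intermediate cohomology vanishes, i.e. $H^{i}(\mathcal{E}(m))=0$ for all $0<i<k$ and all $m \in \mathbb{Z}$ (this follows from comparing the local cohomology of $\Gamma_{\ast}(\mathcal{E})$ with the sheaf cohomology of $\mathcal{E}$, and does not require $X$ itself to be ACM).

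For the forward implication I would invoke part (iv) of Proposition \ref{cliffdef}: if $\mathcal{E}$ is Ulrich there is a finite linear projection $\pi\colon X \to \mathbb{P}^{k}$ with $\pi_{\ast}\mathcal{E} \cong \mathcal{O}_{\mathbb{P}^{k}}^{dr}$. Since $\pi$ is finite, hence affine, and $\mathcal{O}_{X}(1) = \pi^{\ast}\mathcal{O}_{\mathbb{P}^{k}}(1)$, the projection formula gives $H^{i}(\mathcal{E}(m)) \cong H^{i}(\mathbb{P}^{k}, \mathcal{O}_{\mathbb{P}^{k}}(m)^{dr})$ for all $i,m$. The vanishing of intermediate cohomology on $\mathbb{P}^{k}$ shows $\mathcal{E}$ is ACM, while $\chi(\mathcal{E}(t)) = dr\,\chi(\mathcal{O}_{\mathbb{P}^{k}}(t)) = dr\binom{t+k}{k}$ identifies the Hilbert polynomial.

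For the converse, which is the substantive direction, assume $\mathcal{E}$ is ACM with Hilbert polynomial $P(t)=dr\binom{t+k}{k}$. By ACM the intermediate cohomology already vanishes, so condition (v) collapses to the two extreme vanishings $H^{0}(\mathcal{E}(-1))=0$ and $H^{k}(\mathcal{E}(-k))=0$. The first input is that $P(t) = dr\,(t+1)(t+2)\cdots(t+k)/k!$ vanishes at $t=-1,\dots,-k$, so $\chi(\mathcal{E}(-p))=0$ for $1 \le p \le k$; combined with the intermediate vanishing this yields $h^{0}(\mathcal{E}(-p)) + (-1)^{k}h^{k}(\mathcal{E}(-p)) = 0$ for each such $p$. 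The second input is monotonicity: multiplication by a nonzero linear form embeds $H^{0}(\mathcal{E}(-p)) \hookrightarrow H^{0}(\mathcal{E}(-p+1))$, so $h^{0}(\mathcal{E}(-p))$ is nonincreasing in $p$, while by Serre duality the analogous maps on top cohomology become surjections, making $h^{k}(\mathcal{E}(-p))$ nondecreasing in $p$.

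When $k$ is even the Euler-characteristic relation reads $h^{0}(\mathcal{E}(-p))=-h^{k}(\mathcal{E}(-p))$, forcing both to vanish at once. The odd case is the main obstacle, since there the relation only gives $h^{0}(\mathcal{E}(-p)) = h^{k}(\mathcal{E}(-p))$ and does not by itself rule out a common nonzero value; here I would play the two monotonicities against each other to obtain the sandwich $h^{0}(\mathcal{E}(-1)) = h^{k}(\mathcal{E}(-1)) \le h^{k}(\mathcal{E}(-k)) = h^{0}(\mathcal{E}(-k)) \le h^{0}(\mathcal{E}(-1))$, forcing all these dimensions to coincide. In particular (using $k \ge 2$) $h^{0}(\mathcal{E}(-1)) = h^{0}(\mathcal{E}(-2))$, so multiplication by each nonzero linear form $\ell$ is an isomorphism $H^{0}(\mathcal{E}(-2)) \xrightarrow{\sim} H^{0}(\mathcal{E}(-1))$; a nonzero section of $\mathcal{E}(-1)$ would then be divisible by every $\ell$ and so vanish along every hyperplane section, hence everywhere, a contradiction. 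Thus $H^{0}(\mathcal{E}(-1))=0$ and, by the equality of dimensions, $H^{k}(\mathcal{E}(-k))=0$, which is exactly what (v) requires. I expect the bookkeeping in the odd-dimensional sandwich and the Serre-duality justification that the relevant multiplication maps are injective, respectively surjective, to be the only delicate points; the rest is a direct application of Proposition \ref{cliffdef}.
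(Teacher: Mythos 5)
Your argument is correct, but it takes a genuinely different route from the paper. The paper's converse pushes $\mathcal{E}$ forward along a finite linear projection $\pi\colon X\to\mathbb{P}^{k}$, observes that $\pi_{\ast}\mathcal{E}$ is an ACM bundle on $\mathbb{P}^{k}$ and hence splits as $\bigoplus\mathcal{O}_{\mathbb{P}^{k}}(n_{i})$ by Horrocks' theorem, and then compares the coefficients of $t^{k-1}$ and $t^{k-2}$ in the Hilbert polynomial to get $\sum n_{i}=0$ and $\sum n_{i}^{2}=0$, forcing triviality of $\pi_{\ast}\mathcal{E}$ and concluding via parts (iii)--(iv) of Proposition \ref{cliffdef}. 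You instead stay on $X$ and verify the cohomological criterion (v) directly: ACM-ness reduces (v) to $H^{0}(\mathcal{E}(-1))=H^{k}(\mathcal{E}(-k))=0$, the roots of $\binom{t+k}{k}$ at $t=-1,\dots,-k$ give $\chi(\mathcal{E}(-p))=0$ there, and the two hyperplane-section monotonicities squeeze out the vanishing, with the divisibility-by-every-linear-form trick disposing of the odd-dimensional case. Your route avoids Horrocks' splitting criterion entirely and is elementary (long exact sequences and Euler characteristics only), at the cost of using all $k$ roots of the Hilbert polynomial and the somewhat delicate odd/even case split; the paper's route is shorter and needs only the top two coefficients, but imports a nontrivial theorem on $\mathbb{P}^{k}$. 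Two small points worth tightening: the surjectivity of $H^{k}(\mathcal{E}(-p))\to H^{k}(\mathcal{E}(-p+1))$ does not really need Serre duality, since the cokernel of multiplication by $\ell$ is supported on the $(k-1)$-dimensional divisor $X\cap H_{\ell}$ and so has no $H^{k}$; and in the divisibility step you should restrict to linear forms not vanishing identically on $X$ (for each $x\in X$ such a form vanishing at $x$ exists because $\dim X\geq 2$), which is all the argument requires.
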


\begin{proof}
($\Rightarrow$) This is Corollary 2.2 in \cite{ESW}. 

($\Leftarrow$) Let $\mathcal{E}$ be an ACM vector bundle of rank $r$ on $X$ with Hilbert polynomial $dr\binom{t+k}{k},$ and let $\pi:X \rightarrow \mathbb{P}^{k}$ be a linear projection.  Then $\pi_{\ast}\mathcal{E}$ is an ACM vector bundle of rank $dr$ on $\mathbb{P}^{k},$ and Horrocks' Theorem implies that $\pi_{\ast}\mathcal{E} \cong \bigoplus_{j=1}^{dr}\mathcal{O}_{\mathbb{P}^{k}}(n_i)$ for some $n_{1}, \cdots n_{dr} \in \mathbb{Z}.$  It follows from the projection formula that $h^0(\pi_{\ast}\mathcal{E}(t))=h^0(\mathcal{E}(t))$ for all $t \in \mathbb{Z},$ so we have the following equality of Hilbert polynomials:
\begin{equation}
\label{hilbequality}
dr\binom{t+k}{k}=\sum_{i=1}^{dr}\binom{t+n_{i}+k}{k}
\end{equation}
Equating the coefficents of $t^{k-1}$ on either side, we have that $\sum_{i=1}^{dr}n_{i}=0.$  We may use this to deduce, after equating the coefficients of $t^{k-2}$ on either side, that 
\begin{equation}
\sum_{1 \leq j < \ell \leq k}dr \cdot j{\ell} =\sum_{i=1}^{dr}\Biggl(\sum_{1 \leq j < \ell \leq k}(n_{i}+j)(n_{i}+\ell)\Biggr) = \binom{k}{2}\sum_{i=1}^{dr}n_{i}^{2} + \sum_{1 \leq j < \ell \leq k}dr \cdot j{\ell}.
\end{equation} 
We then have that $\sum_{i=1}^{dr}n_{i}^{2}=0,$ which implies that $n_{i}=0$ for all $i.$  By (iii) of Proposition \ref{cliffdef}, we have that $\mathcal{E}$ is Ulrich.
\end{proof}

\begin{rem}
\label{curverem}
This is false when $k=1.$  Indeed, the ACM condition is vacuous for vector bundles on curves, and as mentioned in the Introduction, there exist for each curve $X \subseteq \mathbb{P}^{n}$ of degree $d$ and genus $g$ line bundles with Hilbert polynomial $d(t+1)$ (i.e. degree $g-1+d$) which are not Ulrich, namely those which lie in the theta-divisor associated to $\mathcal{O}_{X}(-1).$ 
\end{rem}

The following is a straightforward consequence of Proposition \ref{cliffdef}.

\begin{cor}
\label{cliffbasics}
Let $\mathcal{E}$ be an Ulrich bundle of rank $r$ on $X.$  Then we have the following:
\begin{itemize}
\item[(i)]{$\mathcal{E}$ is globally generated.}
\item[(ii)]{$h^{0}(\mathcal{E})=\chi(\mathcal{E})=dr.$}
\item[(iii)]{$\mathcal{E}$ is normalized, i.e. $H^{0}(\mathcal{E}) \neq 0$ and $H^{0}(\mathcal{E}(-1))=0.$ \hfill \qedsymbol}
\end{itemize}
\end{cor}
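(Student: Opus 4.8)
The plan is to read off each of the three assertions directly from the equivalent formulations of the Ulrich condition collected in Proposition \ref{cliffdef}, since each property matches one of those formulations almost verbatim; this is why the corollary is labelled a straightforward consequence.

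For part (i) I would invoke the linear resolution in (ii) of Proposition \ref{cliffdef}. The leftmost term $\mathcal{O}_{\mathbb{P}^{n}}^{\beta_{0}}$ carries no twist, so the resolution begins with a surjection $\mathcal{O}_{\mathbb{P}^{n}}^{\beta_{0}} \twoheadrightarrow \iota_{\ast}\mathcal{E}$ of sheaves on $\mathbb{P}^{n}$. A quotient of a globally generated sheaf is globally generated, so $\iota_{\ast}\mathcal{E}$ is globally generated on $\mathbb{P}^{n}$; applying $\iota^{\ast}$ (which is right exact, with $\iota^{\ast}\iota_{\ast}\mathcal{E} \cong \mathcal{E}$ since $\iota$ is a closed immersion) and using $H^{0}(\mathbb{P}^{n},\iota_{\ast}\mathcal{E}) = H^{0}(X,\mathcal{E})$, one concludes that the evaluation map $H^{0}(X,\mathcal{E}) \otimes \mathcal{O}_{X} \rightarrow \mathcal{E}$ is surjective. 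Hence $\mathcal{E}$ is globally generated.

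For part (ii) I would use a finite linear projection $\pi : X \rightarrow \mathbb{P}^{k}$ supplied by (iii) of Proposition \ref{cliffdef}, for which $\pi_{\ast}\mathcal{E} \cong \mathcal{O}_{\mathbb{P}^{k}}^{dr}$. Because $\pi$ is finite we have $R^{i}\pi_{\ast}\mathcal{E} = 0$ for $i > 0$, so the Leray spectral sequence degenerates and yields $H^{i}(X,\mathcal{E}) \cong H^{i}(\mathbb{P}^{k}, \mathcal{O}_{\mathbb{P}^{k}}^{dr})$ for every $i$. The right-hand side has dimension $dr$ when $i = 0$ and vanishes for $i > 0$ (using $k \geq 1$, so that $\mathcal{O}_{\mathbb{P}^{k}}$ has no higher cohomology). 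Therefore $h^{0}(\mathcal{E}) = dr$ and all higher cohomology of $\mathcal{E}$ vanishes, giving $\chi(\mathcal{E}) = h^{0}(\mathcal{E}) = dr$.

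Part (iii) is then immediate: the equality $h^{0}(\mathcal{E}) = dr \geq 1$ from part (ii) gives $H^{0}(\mathcal{E}) \neq 0$, while $H^{0}(\mathcal{E}(-1)) = 0$ is exactly the $j = 0$ instance of the vanishings $H^{j}(\mathcal{E}(-j-1)) = 0$ for $j < k$ recorded in (v) of Proposition \ref{cliffdef}. There is no genuine obstacle in this argument; the only points that merit an explicit word are the finiteness of a general linear projection (which is what kills the higher direct images in part (ii)) and the standard vanishing of the higher cohomology of the trivial bundle on $\mathbb{P}^{k}$.
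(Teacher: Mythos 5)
Your proof is correct and follows exactly the route the paper intends: the paper offers no written argument, simply calling the corollary a straightforward consequence of Proposition \ref{cliffdef}, and your deductions from parts (ii), (iii)/(iv), and (v) of that proposition are precisely the details being left to the reader. The points you flag (finiteness of the projection killing higher direct images, and vanishing of higher cohomology of the trivial bundle on $\mathbb{P}^{k}$) are handled correctly.
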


\begin{prop}
\label{giesstab}
Let $\mathcal{E}$ be an Ulrich bundle of rank $r \geq 1$ on $X$. Then $\mathcal{E}$ is semistable.
\end{prop}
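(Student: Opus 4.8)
The plan is to prove semistability by the standard method of bounding subsheaves via their reduced Hilbert polynomials, exploiting the very strong cohomological normalization that Ulrich bundles enjoy. Recall that for Gieseker semistability we must show that for every nonzero proper subsheaf $\mathcal{F} \subseteq \mathcal{E}$ we have $p_{\mathcal{F}}(t) \leq p_{\mathcal{E}}(t)$ for $t \gg 0$, where $p$ denotes the reduced Hilbert polynomial (Hilbert polynomial divided by leading coefficient times $(\operatorname{rk})^{-1}$, normalized appropriately). Since $\mathcal{E}$ is Ulrich, Corollary \ref{cliffbasics} tells us $\mathcal{E}$ is globally generated with $h^0(\mathcal{E})=dr$, and crucially the resolution \eqref{linres} forces all intermediate cohomology to vanish in the relevant range; by (v) of Proposition \ref{cliffdef} we have $H^i(\mathcal{E}(-i))=0$ for $i>0$ and $H^j(\mathcal{E}(-j-1))=0$ for $j<k$, so the graded module $\Gamma_*(\mathcal{E})$ is generated in degree $0$ and its Hilbert function agrees with its Hilbert polynomial $dr\binom{t+k}{k}$ for all $t \geq 0$.

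The key step is to translate the linearity of the Ulrich resolution into a statement that $\mathcal{E}$ admits \emph{no} twist-down sections: that is, $H^0(\mathcal{E}(-1))=0$ (this is exactly normalization, Corollary \ref{cliffbasics}(iii)). I would then argue that any subsheaf $\mathcal{F}$ of positive rank $r'$ destabilizing $\mathcal{E}$ would, after saturating, produce a subbundle whose pushforward under a general linear projection $\pi:X \to \mathbb{P}^k$ embeds as a subsheaf of $\pi_*\mathcal{E} \cong \mathcal{O}_{\mathbb{P}^k}^{dr}$ by Proposition \ref{cliffdef}(iii). Since $\pi$ is finite of degree $d$, the pushforward is exact and preserves Hilbert polynomials up to the known relation $h^0(\pi_*\mathcal{E}(t))=h^0(\mathcal{E}(t))$ used in the proof of Proposition \ref{ulrichchar}. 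A nonzero subsheaf of the trivial bundle $\mathcal{O}_{\mathbb{P}^k}^{dr}$ has reduced Hilbert polynomial at most that of $\mathcal{O}_{\mathbb{P}^k}$, with equality forcing it to be a trivial summand; comparing against $p_{\mathcal{E}}$, which equals the reduced Hilbert polynomial of $\mathcal{O}_{\mathbb{P}^k}^{dr}$, yields the desired inequality $p_{\mathcal{F}} \leq p_{\mathcal{E}}$.

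The main obstacle I anticipate is controlling the behavior of the subsheaf under the projection: one must ensure that a destabilizing $\mathcal{F} \subseteq \mathcal{E}$ really does push forward to a genuinely destabilizing subsheaf of $\mathcal{O}_{\mathbb{P}^k}^{dr}$, and that no reduction in rank or jump in the normalized leading coefficients occurs under $\pi_*$. Because $\pi$ is finite and flat of degree $d$, pushforward is exact and multiplies ranks by $d$ while the Hilbert polynomials transform in a controlled way, so the reduced Hilbert polynomials are preserved; the delicate point is verifying that the inclusion $\pi_*\mathcal{F} \hookrightarrow \pi_*\mathcal{E}$ stays injective as a sheaf map and that $\pi_*\mathcal{F}$ is torsion-free, which holds for general $\pi$ since $\mathcal{F}$ may be taken saturated. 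Once this reduction to subsheaves of a trivial bundle on projective space is in place, the semistability of $\mathcal{O}_{\mathbb{P}^k}^{dr}$ is elementary, and the result follows. An alternative route, which may sidestep the projection entirely, is to use the fact that Ulrich bundles have the \emph{minimal} possible regularity and initial degree among bundles with their Hilbert polynomial, so that any subsheaf with larger reduced Hilbert polynomial would violate the vanishing $H^0(\mathcal{E}(-1))=0$; I would pursue whichever of these two arguments proves cleaner to make rigorous.
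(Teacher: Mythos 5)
Your proposal is correct and follows essentially the same route as the paper: push a torsion-free (saturated) subsheaf $\mathcal{F} \subseteq \mathcal{E}$ forward under a finite linear projection $\pi$, use Proposition \ref{cliffdef}(iii) to identify $\pi_{\ast}\mathcal{E}$ with a trivial bundle, invoke the semistability of the trivial bundle, and transfer the inequality of reduced Hilbert polynomials back via the exactness and cohomology-preservation of finite pushforward. The paper's proof is exactly this argument, stated in a few lines; your worries about injectivity and torsion-freeness of $\pi_{\ast}\mathcal{F}$ are handled automatically since $\pi$ is finite (hence affine and exact on coherent sheaves) and $\pi_{\ast}$ of a torsion-free sheaf on $X$ is torsion-free on $\mathbb{P}^{k}$.
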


\begin{proof}
Let $\mathcal{F}$ be a rank-$s$ torsion-free coherent subsheaf of $\mathcal{E}$.  Then $\pi_{\ast}\mathcal{F}$ is a rank-$ds$ torsion-free coherent subsheaf of $\pi_{\ast}\mathcal{E}=\mathcal{O}_{\mathbb{P}^{n-1}}^{dr},$ and since $\mathcal{O}_{\mathbb{P}^{n-1}}^{dr}$ is semistable, we have that $p(\pi_{\ast}\mathcal{F}) \leq p(\pi_{\ast}\mathcal{E})$.  Since cohomology is preserved under finite pushforward, we have that $d \cdot p(\pi_{\ast}\mathcal{F})=p(\mathcal{F})$ and $d \cdot p(\pi_{\ast}\mathcal{E})=p(\mathcal{E})$.  It follows immediately that $p(\mathcal{F}) \leq p(\mathcal{E})$.
\end{proof}

It will be important to know that the Ulrich property is well behaved in short exact sequences (Proposition \ref{extcliff}). First, we need a lemma.

\begin{lem}
\label{locfree}
Let $g:Y \rightarrow Z$ be a finite flat surjective morphism of smooth projective varieties, and let $\mathcal{G}$ be a coherent sheaf on $Y$ such that $g_{\ast}\mathcal{G}$ is locally free.  Then $\mathcal{G}$ is locally free.
\end{lem}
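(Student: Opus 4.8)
The plan is to reduce the statement to a purely local question in commutative algebra and then to combine the transfer of a regular sequence across the finite map $g$ with the Auslander--Buchsbaum formula. Since $g$ is finite it is affine, so local freeness of $\mathcal{G}$ may be checked stalkwise and I may localize freely on $Z$. First I would fix a point $z \in Z$ and set $A = \mathcal{O}_{Z,z}$, a regular local ring of dimension $n := \dim Z$. Because $g$ is finite, the fiber $g^{-1}(z) = \{y_1, \dots, y_k\}$ is finite and $g^{-1}(\operatorname{Spec} A) = \operatorname{Spec} B$ with $B$ module-finite over $A$, semilocal, with maximal ideals $\mathfrak{n}_1, \dots, \mathfrak{n}_k$ corresponding to the $y_i$; each localization $B_{\mathfrak{n}_i} = \mathcal{O}_{Y,y_i}$ is a regular local ring of dimension $n$, the equality $\dim Y = \dim Z$ coming from the fact that $g$ is finite and surjective. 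Writing $M$ for the finite $B$-module with $\widetilde{M} = \mathcal{G}|_{\operatorname{Spec} B}$, one has $(g_{\ast}\mathcal{G})_{z} = M$ regarded as an $A$-module, so the hypothesis says precisely that $M$ is a free $A$-module. It then suffices to prove that each $\mathcal{G}_{y_i} = M_{\mathfrak{n}_i}$ is free over $B_{\mathfrak{n}_i}$.

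The key step is a depth comparison. Since $A$ is regular local of dimension $n$ and $M$ is a nonzero free $A$-module, $\operatorname{depth}_{A} M = \operatorname{depth} A = n$, so there is an $M$-regular sequence $a_1, \dots, a_n \in \mathfrak{m}_{A}$. Viewing these elements in $B$ via $A \to B$, they lie in $\mathfrak{m}_{A}B \subseteq \mathfrak{n}_i$ for every $i$. The conditions defining an $M$-regular sequence, namely that multiplication by $a_j$ is injective on $M/(a_1, \dots, a_{j-1})M$ and that $(a_1, \dots, a_n)M \neq M$, are intrinsic to $M$ together with the self-maps given by the $a_j$, and so are insensitive to whether $M$ is regarded over $A$ or over $B$. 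Localizing at $\mathfrak{n}_i$ preserves injectivity of these multiplication maps (localization is exact), and $(a_1, \dots, a_n)M_{\mathfrak{n}_i} \subseteq \mathfrak{n}_i M_{\mathfrak{n}_i} \neq M_{\mathfrak{n}_i}$ by Nakayama whenever $M_{\mathfrak{n}_i} \neq 0$. Hence $a_1, \dots, a_n$ is an $M_{\mathfrak{n}_i}$-regular sequence, which gives $\operatorname{depth}_{B_{\mathfrak{n}_i}} M_{\mathfrak{n}_i} \geq n$.

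To finish I would invoke Auslander--Buchsbaum over each regular local ring $B_{\mathfrak{n}_i}$. For each $i$ with $M_{\mathfrak{n}_i} \neq 0$ (the vanishing case being trivially free), the depth is at most $\dim B_{\mathfrak{n}_i} = n$, so the previous step forces $\operatorname{depth}_{B_{\mathfrak{n}_i}} M_{\mathfrak{n}_i} = n$, i.e.\ $M_{\mathfrak{n}_i}$ is maximal Cohen--Macaulay. Since $B_{\mathfrak{n}_i}$ is regular, every finite module has finite projective dimension, and the Auslander--Buchsbaum formula yields $\operatorname{pd}_{B_{\mathfrak{n}_i}} M_{\mathfrak{n}_i} = \operatorname{depth} B_{\mathfrak{n}_i} - \operatorname{depth}_{B_{\mathfrak{n}_i}} M_{\mathfrak{n}_i} = n - n = 0$. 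Thus $M_{\mathfrak{n}_i}$ is free, i.e.\ $\mathcal{G}$ is locally free at each $y_i$; as $z \in Z$ was arbitrary, $\mathcal{G}$ is locally free on all of $Y$.

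I expect the main obstacle to be exactly the depth comparison in the second paragraph, that is, checking carefully that an $M$-regular sequence in the base ring $A$ remains $M_{\mathfrak{n}_i}$-regular after passing to $B$ and localizing at each point of the fiber; everything surrounding it is formal bookkeeping. The essential content is the inequality $\operatorname{depth}_{A} M \leq \operatorname{depth}_{B_{\mathfrak{n}_i}} M_{\mathfrak{n}_i}$, which together with the dimension bound pins the module down as maximal Cohen--Macaulay, hence free, over the regular local rings upstairs. I would note that flatness of $g$ is not actually needed for this argument: finiteness of $g$ (for the reduction and for $\dim Y = \dim Z$) and the regularity of both $Y$ and $Z$ carry the entire proof.
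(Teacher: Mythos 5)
Your proof is correct, and it reaches the same local statement as the paper by a genuinely different mechanism. Both arguments reduce to a finite extension $A \to B$ with $A = \mathcal{O}_{Z,z}$ regular local and $M$ a finite $B$-module that is free over $A$, and both ultimately pin $M$ down as maximal Cohen--Macaulay (equivalently, of projective dimension zero) over the regular local rings $\mathcal{O}_{Y,y_i}$. The paper gets there homologically: from freeness of $M$ over $A$ it has $\operatorname{Ext}^i_A(M,A)=0$ for $i>0$, it uses finite flatness to make $B$ free over $A$ with $\operatorname{Hom}_A(B,A)\cong B$ (triviality of the relative dualizing module for these Gorenstein rings), and then the change-of-rings spectral sequence $\operatorname{Ext}^i_B(M,\operatorname{Ext}^j_A(B,A))\Rightarrow \operatorname{Ext}^{i+j}_A(M,A)$ degenerates to give $\operatorname{Ext}^i_B(M,B)=0$ for $i>0$, which over a regular local ring forces $M$ free. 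You instead transfer a maximal $M$-regular sequence from $\mathfrak{m}_A$ into each $\mathfrak{n}_i$ and finish with Auslander--Buchsbaum. Your route is more elementary (no spectral sequence, no dualizing module) and, as you observe, does not use flatness of $g$ at all, whereas the paper's does (through the freeness of $B$ over $A$); this is only a cosmetic gain, since a finite surjective morphism of smooth varieties is automatically flat, but it does make the logical dependencies cleaner. The one step that needed care --- that the regular sequence survives localization at each $\mathfrak{n}_i$, including the nondegeneracy $(a_1,\dots,a_n)M_{\mathfrak{n}_i}\neq M_{\mathfrak{n}_i}$ via Nakayama --- is handled correctly in your write-up.
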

\begin{proof}
To show that $\mathcal{G}$ is locally free, we show that the stalks this sheaf are free modules over the local ring at any point. So translating the hypotheses into the local situation, we have a finite flat morphism of regular local rings $R \rightarrow S$ and a  finite $S$-module $M$ such that $M$ as an $R$-module is locally free of finite rank.  Thus $\displaystyle {\rm Ext}^i_R(M, R) = 0 \ \textnormal{for any} \  i > 0$ and $\displaystyle {\rm Hom}_R(M, R) \cong S$ as an $S$-module. We also have the change of rings spectral sequence:
\[
\displaystyle {\rm Ext}^i_S(M, {\rm Ext}^j_R(S, R)) \Rightarrow {\rm Ext}^{i + j}_R(M, R).
\]
\noindent The degeneration of this spectral sequence gives the isomorphism
\[
\displaystyle {\rm Ext}^i_S(M, S) \cong  {\rm Ext}^{i }_R(M, R) = 0
\]
\noindent for any $i >0$. So $M$ is a free $R$ module. 
\end{proof}

\begin{prop}
\label{extcliff}
Consider the following short exact sequence of coherent sheaves on $X$:
\begin{equation}
0 \rightarrow \mathcal{F} \rightarrow \mathcal{E} \rightarrow \mathcal{G} \rightarrow 0
\end{equation}
If any two of $\mathcal{F}$, $\mathcal{E}$, and $\mathcal{G}$ are Ulrich bundles, then so is the third.
\end{prop}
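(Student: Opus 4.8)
The plan is to exploit the characterization of Ulrich bundles via linear projections, namely criteria (iii) and (iv) of Proposition \ref{cliffdef}, together with the local-freeness criterion of Lemma \ref{locfree}. Fix a linear projection $\pi : X \rightarrow \mathbb{P}^{k}$, which is finite and flat since $X$ has dimension $k$ and the projection is induced by a generic linear subspace. The functor $\pi_{\ast}$ is exact on the short exact sequence because $\pi$ is finite (hence affine), so from $0 \rightarrow \mathcal{F} \rightarrow \mathcal{E} \rightarrow \mathcal{G} \rightarrow 0$ we obtain the short exact sequence $0 \rightarrow \pi_{\ast}\mathcal{F} \rightarrow \pi_{\ast}\mathcal{E} \rightarrow \pi_{\ast}\mathcal{G} \rightarrow 0$ of sheaves on $\mathbb{P}^{k}$. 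By Proposition \ref{cliffdef}, a sheaf on $X$ is Ulrich precisely when its pushforward under every (equivalently, some) linear projection is a trivial bundle $\mathcal{O}_{\mathbb{P}^{k}}^{\oplus N}$.

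First I would handle the two cases in which the conclusion is purely a statement about trivial bundles on $\mathbb{P}^{k}$. If $\mathcal{F}$ and $\mathcal{E}$ are Ulrich, then $\pi_{\ast}\mathcal{F}$ and $\pi_{\ast}\mathcal{E}$ are trivial, and $\pi_{\ast}\mathcal{G}$ is their quotient; I would show $\pi_{\ast}\mathcal{G} \cong \mathcal{O}_{\mathbb{P}^{k}}^{\oplus N'}$ by checking that the quotient of trivial bundles appearing here splits. Concretely, the inclusion $\mathcal{O}_{\mathbb{P}^{k}}^{a} \hookrightarrow \mathcal{O}_{\mathbb{P}^{k}}^{b}$ is given by a constant matrix (since $\mathrm{Hom}(\mathcal{O},\mathcal{O})$ consists of scalars on each factor and $H^0(\mathcal{O}(1))$-entries would force a nontrivial twist), so after a change of basis it is a standard coordinate inclusion and the cokernel is trivial. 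Dually, if $\mathcal{E}$ and $\mathcal{G}$ are Ulrich, then $\pi_{\ast}\mathcal{F}$ is the kernel of a surjection of trivial bundles given by a constant matrix, hence again trivial. In both cases the pushforward of the third sheaf is trivial, and then Lemma \ref{locfree} applies to the finite flat surjection $\pi$ to conclude that the third sheaf is itself locally free; combined with triviality of its pushforward and criterion (iv) of Proposition \ref{cliffdef}, it is Ulrich.

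The remaining case, where $\mathcal{F}$ and $\mathcal{G}$ are assumed Ulrich and one must show $\mathcal{E}$ is Ulrich, is the one I expect to be the main obstacle, because here $\pi_{\ast}\mathcal{E}$ is an extension of two trivial bundles rather than a sub- or quotient bundle, and such extensions on $\mathbb{P}^{k}$ need not be trivial a priori. The cleanest route is to argue that the extension class lives in $\mathrm{Ext}^{1}_{\mathbb{P}^{k}}(\mathcal{O}_{\mathbb{P}^{k}}^{b}, \mathcal{O}_{\mathbb{P}^{k}}^{a}) \cong H^{1}(\mathcal{O}_{\mathbb{P}^{k}})^{\oplus ab} = 0$, since $H^{1}(\mathbb{P}^{k}, \mathcal{O}_{\mathbb{P}^{k}})$ vanishes for all $k \geq 1$. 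Thus the extension $\pi_{\ast}\mathcal{E}$ splits as $\mathcal{O}_{\mathbb{P}^{k}}^{a} \oplus \mathcal{O}_{\mathbb{P}^{k}}^{b}$ and is trivial. The local-freeness of $\mathcal{E}$ then either follows from Lemma \ref{locfree}, or more directly from the fact that $\mathcal{E}$ sits in a short exact sequence between two vector bundles on the smooth variety $X$ (so it is reflexive of the expected rank, hence locally free on a surface-or-higher-dimensional smooth base after checking it has no torsion and the right rank). Invoking criterion (iv) of Proposition \ref{cliffdef} once more finishes the argument.
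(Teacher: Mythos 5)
Your proposal is correct and follows essentially the same route as the paper: push the short exact sequence forward along a finite linear projection, show that the pushforward of the third sheaf is a trivial bundle on $\mathbb{P}^{k}$, and conclude via Lemma \ref{locfree} and criterion (iv) of Proposition \ref{cliffdef}. The only cosmetic difference is in the sub- and quotient cases, where you observe directly that maps of trivial bundles are constant matrices of full rank, while the paper instead dualizes the pushed-forward sequence and counts global sections; your splitting of the extension via $\mathrm{Ext}^{1}(\mathcal{O}^{b},\mathcal{O}^{a})\cong H^{1}(\mathcal{O}_{\mathbb{P}^{k}})^{\oplus ab}=0$ makes explicit what the paper merely asserts.
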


\begin{proof}
Let $f, e$, and $e-f$ be the respective ranks of $\mathcal{F},$ $\mathcal{E},$ and $\mathcal{G}.$  Since $\pi$ is a finite morphism, we have the following exact sequence of sheaves on $\mathbb{P}^{n-1}$:
\begin{equation}
\label{cliffpush}
0 \rightarrow \pi_{\ast}\mathcal{F} \rightarrow \pi_{\ast}\mathcal{E} \rightarrow \pi_{\ast}\mathcal{G} \rightarrow 0.
\end{equation}
If $\mathcal{F}$ and $\mathcal{G}$ are Ulrich bundles, then $\pi_{\ast}\mathcal{F}$ and $\pi_{\ast}\mathcal{G}$ are trivial vector bundles on $\mathbb{P}^{n-1}.$  Therefore $\pi_{\ast}\mathcal{E}$, being an extension of trivial vector bundles on $\mathbb{P}^{n-1}$, is also trivial, so that $\mathcal{E}$ is Ulrich.

If $\mathcal{E}$ and $\mathcal{G}$ are Ulrich bundles, then $\mathcal{F}$ is locally free.  By definition $\pi_{\ast}\mathcal{E}$ and $\pi_{\ast}\mathcal{G}$ are trivial, so dualizing (\ref{cliffpush}) yields the exact sequence
\begin{equation}
0 \rightarrow \mathcal{O}_{\mathbb{P}^{n-1}}^{d(e-f)} \rightarrow \mathcal{O}_{\mathbb{P}^{n-1}}^{de} \rightarrow (\pi_{\ast}\mathcal{F})^{\vee}\rightarrow 0
\end{equation}
It follows from taking cohomology that $(\pi_{\ast}\mathcal{F})^{\vee}$ is a globally generated vector bundle of rank $df$ on $\mathbb{P}^{n-1}$ with exactly $df$ global sections, so it must be trivial.  In particular, $\pi_{\ast}\mathcal{F} \cong \mathcal{O}_{\mathbb{P}^{n-1}}^{dr}$, i.e. $\mathcal{F}$ is Ulrich.

Finally, if $\mathcal{F}$ and $\mathcal{E}$ are Ulrich bundles, then applying the functor $\mathcal{H}om({\cdot},\mathcal{O}_{\mathbb{P}^{n-1}})$ to (\ref{cliffpush}) shows that the globally generated coherent sheaf $\pi_{\ast}\mathcal{G}$ is torsion-free of rank $d(e-f).$  Since $\pi_{\ast}\mathcal{G}$ has exactly $d(e-f)$ global sections, it follows that $\pi_{\ast}\mathcal{G}$ is trivial.  Lemma \ref{locfree} then implies that $\mathcal{G}$ is locally free, hence an Ulrich bundle.
\end{proof}


\subsection{The case of Arithmetically Gorenstein surfaces}
\label{gorenstein}
In order to prepare for the proofs of our main results and place them in a proper context, we embark on a study of Ulrich bundles on a smooth AG (Arithmetically Gorenstein) surface.  Recall that a smooth projective variety $X \subseteq \mathbb{P}^{n}$ is AG if it is ACM and its canonical bundle is isomorphic to $\mathcal{O}_{X}(m)$ for some $m \in \mathbb{Z}.$

\begin{lem}
\label{mbound}
If $X \subseteq \mathbb{P}^{n}$ is an AG surface with $K_{X}=mH,$ then $m \geq -2,$ with equality if and only if $X$ is a smooth quadric surface.
\end{lem}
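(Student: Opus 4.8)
The plan is to translate the arithmetic Gorenstein condition into a statement about the adjunction formula on a surface and then extract the inequality $m \geq -2$ from positivity of the hyperplane class. Since $X \subseteq \mathbb{P}^n$ is a smooth AG surface with $K_X = mH$, I would first fix a smooth hyperplane section $C \in |H|$, which is a smooth projective curve of some genus $g$. The key numerical input is the adjunction formula $2g - 2 = (K_X + H) \cdot H = (m+1)H^2 = (m+1)d$, where $d = H^2 = \deg X$ is the degree of the embedded surface. Since $C$ is a curve we have $g \geq 0$, hence $2g - 2 \geq -2$, which yields $(m+1)d \geq -2$.

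From here I would argue that $m \geq -2$. Because $d \geq 2$ (this is the standing hypothesis on $X$ in this subsection) and $m$ is an integer, the inequality $(m+1)d \geq -2$ forces $m + 1 \geq -1$, i.e. $m \geq -2$: indeed if $m + 1 \leq -2$ then $(m+1)d \leq -2d \leq -4 < -2$, a contradiction. So the lower bound drops out immediately from adjunction together with $d \geq 2$.

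For the equality case, I would analyze when $m = -2$. This forces $(m+1)d = -d$, so $2g - 2 = -d$, i.e. $g = 1 - d/2$. Since $g \geq 0$ we need $d \leq 2$, and combined with $d \geq 2$ this pins down $d = 2$ and $g = 0$. Thus $X$ is a degree-two surface in $\mathbb{P}^n$ whose general hyperplane section is a smooth rational curve; moreover $K_X = -2H$ means $X$ is anticanonically something like a quadric. I would then identify $X$: a smooth nondegenerate surface of degree $2$ in projective space is necessarily a smooth quadric surface in $\mathbb{P}^3$ (a degree-two surface of minimal degree), and conversely the smooth quadric $Q \cong \mathbb{P}^1 \times \mathbb{P}^1 \subseteq \mathbb{P}^3$ has $K_Q = -2H$, so $m = -2$ holds for it. This gives the ``if and only if.''

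The main obstacle I anticipate is the identification step in the equality case, namely showing that a smooth AG surface of degree $2$ with $K_X = -2H$ must be \emph{the} smooth quadric rather than some other degree-two surface. The genus and degree computation constrains things strongly, but I would want to invoke the classification of varieties of minimal degree (a nondegenerate irreducible surface in $\mathbb{P}^n$ has degree $\geq n - 1$, with degree-two surfaces forced to be quadric hypersurfaces or otherwise classified), and check that the AG hypothesis together with smoothness rules out the degenerate alternatives. The inequality itself is routine once adjunction is in hand; the delicacy lies entirely in converting the numerical data $d = 2$, $g = 0$, $K_X = -2H$ into the precise geometric conclusion that $X$ is a smooth quadric.
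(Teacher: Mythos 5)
Your proof is correct, and the inequality half is essentially identical to the paper's: both compute $2g-2=(m+1)H^2$ for a hyperplane section and combine $g\geq 0$ with the standing hypothesis $\deg X=H^2\geq 2$ to force $m\geq -2$ (and, in the equality case, $H^2=2$). The only genuine divergence is in the final identification step. The paper observes that $-K_X=2H$ is ample and uses Riemann--Roch to compute $h^0(\mathcal{O}_X(H))=4$, so that $X$ sits as a degree-$2$ surface in $\mathbb{P}^3$, i.e.\ a quadric; you instead invoke the classification of varieties of minimal degree to conclude that a smooth surface of degree $2$ spans at most a $\mathbb{P}^3$ and is a quadric hypersurface there. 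Both routes work; the Riemann--Roch computation is self-contained and avoids appealing to the minimal-degree classification, while your version sidesteps the ampleness/cohomology bookkeeping at the cost of citing that classification and of a small point you should make explicit, namely that you may replace $\mathbb{P}^n$ by the linear span of $X$ so that the nondegeneracy hypothesis in the classification is satisfied.
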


\begin{proof}
Suppose that $K_{X}=mH$ for $m \leq -3,$ and let $g$ be the (nonnegative) arithmetic genus of a hyperplane section of $X.$  The adjunction formula implies that $2g-2 \leq -2H^{2}.$  Since $g$ is nonnegative, we have $H^{2} \leq 1,$ which is impossible if $H$ is very ample.

It is clear that $m=-2$ if $X$ is a smooth quadric in $\mathbb{P}^{3}.$  Conversely, if $K_X=-2H,$ then the adjunction formula implies that $2g-2=-H^{2},$ which is only possible if $H^{2}=2.$  Since $-K_{X}$ is ample, Riemann-Roch tells us that the linear system $|H|$ is 3-dimensional, so that $X$ is a quadric surface.
\end{proof}

\begin{prop}
\label{agchern}
Suppose $X \subseteq \mathbb{P}^{n}$ is an AG surface of degree $d \geq 2$ with hyperplane class $H$ and $K_X=mH.$  Then a vector bundle $\mathcal{E}$ of rank $r$ on $X$ is Ulrich if and only if the following conditions hold:
\begin{itemize}
\item[(i)]{$\mathcal{E}$ is ACM.}
\item[(ii)]{$c_1(\mathcal{E}) \cdot H = \bigl(\frac{m+3}{2}\bigr) \cdot dr.$}
\item[(iii)]{$c_2(\mathcal{E})=\frac{c_1(\mathcal{E})^{2}}{2}-dr \cdot \bigl(\frac{m^{2}+3m+4}{4}\bigr) +r \cdot (1+h^0(K_X)).$}
\end{itemize}
\end{prop}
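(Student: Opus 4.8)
The plan is to derive both implications from Proposition \ref{ulrichchar} together with a single application of Riemann--Roch. Since $X$ is a surface, Proposition \ref{ulrichchar} asserts that $\mathcal{E}$ is Ulrich if and only if it is ACM and its Hilbert polynomial is $dr\binom{t+2}{2}=\frac{dr}{2}(t^{2}+3t+2)$. Condition (i) is precisely the ACM requirement, so the entire statement reduces to showing that the polynomial identity $\chi(\mathcal{E}(t))=\frac{dr}{2}(t^{2}+3t+2)$ is equivalent to the conjunction of (ii) and (iii).

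First I would compute the Hilbert polynomial by Hirzebruch--Riemann--Roch, writing $\chi(\mathcal{E}(t))=\int_{X}\mathrm{ch}(\mathcal{E})\,e^{tH}\,\mathrm{td}(X)$ and reading off the degree-two component. Using $K_{X}=mH$, $H^{2}=d$, and Noether's formula $\chi(\mathcal{O}_{X})=\frac{1}{12}\bigl(K_{X}^{2}+c_{2}(X)\bigr)$, this expresses $\chi(\mathcal{E}(t))$ as a quadratic in $t$ whose coefficient of $t^{2}$ is $\frac{dr}{2}$, whose coefficient of $t^{1}$ is $(c_{1}(\mathcal{E})\cdot H)-\frac{rmd}{2}$, and whose constant term is $\frac{1}{2}c_{1}(\mathcal{E})^{2}-c_{2}(\mathcal{E})-\frac{m}{2}(c_{1}(\mathcal{E})\cdot H)+r\chi(\mathcal{O}_{X})$. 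The leading coefficient $\frac{dr}{2}$ depends only on $r$ and $\deg X=d$ and therefore matches automatically, so the Hilbert-polynomial identity is equivalent to matching the remaining two coefficients against $\frac{3dr}{2}$ and $dr$.

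Matching the coefficients of $t^{1}$ gives $c_{1}(\mathcal{E})\cdot H=\frac{m+3}{2}\,dr$, which is exactly (ii). Substituting this relation into the constant-term equation and solving for $c_{2}(\mathcal{E})$ then yields (iii), once $\chi(\mathcal{O}_{X})$ is rewritten: because $X$ is AG and hence ACM, one has $H^{1}(\mathcal{O}_{X})=0$, so Serre duality gives $\chi(\mathcal{O}_{X})=1-h^{1}(\mathcal{O}_{X})+h^{2}(\mathcal{O}_{X})=1+h^{0}(K_{X})$. With this substitution the constant $\frac{m^{2}+3m+4}{4}$ appears naturally as $1+\frac{m(m+3)}{4}$ after using (ii) to eliminate $c_{1}(\mathcal{E})\cdot H$, producing precisely the formula in (iii). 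Reversing every step shows that (ii) and (iii) together force the Hilbert polynomial to equal $\frac{dr}{2}(t^{2}+3t+2)$, which gives the converse.

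I expect the computation to be essentially routine; the points that require care are the Chern-character bookkeeping---in particular the cross term $-\frac{1}{2}c_{1}(\mathcal{E})\cdot K_{X}=-\frac{m}{2}\bigl(c_{1}(\mathcal{E})\cdot H\bigr)$, which is responsible for the exact shape of (iii)---and the fact that (ii) must be invoked when simplifying the constant term, so that (ii) and (iii) are genuinely being used in tandem. The one non-formal ingredient is the vanishing $H^{1}(\mathcal{O}_{X})=0$ supplied by the AG hypothesis via Serre duality; with that in hand, the result is pure bookkeeping.
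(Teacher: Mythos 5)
Your proposal is correct and follows essentially the same route as the paper: both reduce the statement to matching the Hilbert polynomial $dr\binom{t+2}{2}$ via Proposition \ref{ulrichchar}, expand $\chi(\mathcal{E}(t))$ by Riemann--Roch using $h^{1}(\mathcal{O}_{X})=0$ and $\chi(\mathcal{O}_{X})=1+h^{0}(K_{X})$, and equate the $t^{1}$ and constant coefficients to obtain (ii) and (iii) respectively, with (ii) feeding into the simplification of the constant term. The coefficient bookkeeping you describe, including the cross term $-\tfrac{m}{2}\,c_{1}(\mathcal{E})\cdot H$ and the emergence of $\tfrac{m^{2}+3m+4}{4}$, checks out.
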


\begin{proof}
($\Rightarrow$) If $\mathcal{E}$ is Ulrich, it satisfies (i), so we need only compute the Chern classes of $\mathcal{E}.$  Assume $t \in \mathbb{Z}$ is sufficiently large so that $h^0(\mathcal{E}(t))$ coincides with the Hilbert polynomial of $\mathcal{E}$ evaluated at $t.$  Since $X$ is AG, we have $h^1(\mathcal{O}_{X})=0,$ so Riemann-Roch implies that
\begin{equation}
\label{agrr}
h^0(\mathcal{E}(t))=\frac{c_1(\mathcal{E}(t))^{2}}{2}-c_2(\mathcal{E}(t))-\frac{c_1(\mathcal{E}(t)) \cdot K_X}{2}+r(1+h^0(K_X))
\end{equation}
$$=\biggl(\frac{dr}{2}\biggr)t^2 + \biggl(c_1(\mathcal{E}) \cdot H-\frac{mdr}{2}\biggr)t + \chi(\mathcal{E})$$
It follows from (ii) of Corollary \ref{cliffbasics} and (vi) of Proposition \ref{cliffdef}, respectively, that  $\chi(\mathcal{E})=h^0(\mathcal{E})=dr$ and that the Hilbert polynomial of $\mathcal{E}$ is $dr\binom{t+2}{2}=\bigl(\frac{dr}{2}\bigr)t^2+\bigl(\frac{3dr}{2}\bigr)t+dr.$  Expanding out $\chi(\mathcal{E})$ by Riemann-Roch and equating coefficients, we obtain the desired values of $c_1(\mathcal{E}) \cdot H$ and $c_2(\mathcal{E}).$

($\Leftarrow$)  Assume that $\mathcal{E}$ satisfies conditions (i),(ii), and (iii).  Combining Riemann-Roch with (ii) and (iii), we see that $\chi(\mathcal{E})=dr.$  The computation (\ref{agrr}) is still valid under our hypothesis, so we may set $\chi(\mathcal{E})=dr$ and $c_1(\mathcal{E}) \cdot H = \bigl(\frac{m+3}{2}\bigr) \cdot dr$ in (\ref{agrr}) and deduce that the Hilbert polynomial of $\mathcal{E}$ is $dr\binom{t+2}{2},$ i.e. that $\mathcal{E}$ is an Ulrich bundle.
\end{proof} 

\begin{prop}
\label{agdual}
Let $\mathcal{E}$ be an Ulrich bundle of rank $r$ on an AG surface $X.$  If $K_{X}=mH,$ then $\mathcal{E}^{\vee}(m+3)$ is also Ulrich.
\end{prop}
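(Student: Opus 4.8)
The plan is to verify that $\mathcal{E}^{\vee}(m+3)$ satisfies the three conditions of Proposition \ref{agchern}, since that proposition gives a checkable Chern-class characterization of the Ulrich property on an AG surface. First I would verify condition (i), the ACM property, for $\mathcal{E}^{\vee}(m+3)$. The ACM condition is self-dual up to twist by the canonical bundle: for a smooth $k$-dimensional ACM variety, Serre duality on $X$ combined with the ACM-ness of $X$ itself gives $H^{i}(\mathcal{E}^{\vee}(t)) \cong H^{k-i}(\mathcal{E}(-t) \otimes K_{X})^{\vee}$ for the intermediate cohomology. Since $K_{X}=mH$ and $\mathcal{E}$ is ACM, the relevant intermediate cohomology of $\mathcal{E}$ vanishes, so the intermediate cohomology of $\mathcal{E}^{\vee}$ (hence of any twist of it) vanishes as well. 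Thus $\mathcal{E}^{\vee}(m+3)$ is ACM.

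Next I would check the numerical conditions (ii) and (iii) by a direct Chern-class computation. Writing $\mathcal{F}:=\mathcal{E}^{\vee}(m+3)$, one has $c_{1}(\mathcal{F}) = -c_{1}(\mathcal{E}) + r(m+3)H$ and a standard formula for $c_{2}(\mathcal{F})$ in terms of $c_{1}(\mathcal{E})$, $c_{2}(\mathcal{E})$, $r$, and the twist. For condition (ii), I would compute
\begin{equation}
c_{1}(\mathcal{F}) \cdot H = -c_{1}(\mathcal{E}) \cdot H + r(m+3)H^{2} = -\Bigl(\tfrac{m+3}{2}\Bigr)dr + r(m+3)d,
\end{equation}
using condition (ii) for $\mathcal{E}$ and $H^{2}=d$; this simplifies to $\bigl(\tfrac{m+3}{2}\bigr)dr$, exactly the required value. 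For condition (iii), I would expand $c_{1}(\mathcal{F})^{2}$ and $c_{2}(\mathcal{F})$ and substitute the known value of $c_{2}(\mathcal{E})$ from condition (iii) for $\mathcal{E}$; the terms should reorganize into the required expression for $c_{2}(\mathcal{F})$.

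I expect the ACM verification to be conceptually the cleanest step once the duality statement is set up correctly, while the genuine bookkeeping obstacle will be condition (iii): the Chern-class arithmetic for $c_{2}$ under dualizing-and-twisting involves several cross terms, and one must carefully track the intersection numbers $c_{1}(\mathcal{E}) \cdot H = \bigl(\tfrac{m+3}{2}\bigr)dr$ and $H^{2}=d$ to see the cancellations. The key to keeping this manageable is to observe that the quantity $h^{0}(K_{X})$ appearing in condition (iii) is unchanged (it depends only on $X$, not on $\mathcal{E}$), so the only moving parts are the $\mathcal{E}$-dependent terms, which should match after substitution. Alternatively, one could sidestep the heaviest computation by noting that $\chi$ and the Hilbert polynomial transform predictably under the operation $\mathcal{E} \mapsto \mathcal{E}^{\vee} \otimes K_{X}(3)$, which is essentially the duality realizing the symmetry of the linear resolution (\ref{linres}); but the most self-contained route within the tools already established is the direct check against Proposition \ref{agchern}.
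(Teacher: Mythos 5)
Your proposal is correct and follows essentially the same route as the paper: the paper also verifies the three conditions of Proposition \ref{agchern}, using Serre duality ($h^1(\mathcal{E}^{\vee}(t+m+3))=h^1(\mathcal{E}(-t-3))=0$) for the ACM condition and then checking that $c_1(\mathcal{E}^{\vee}(m+3))\cdot H=\bigl(\tfrac{m+3}{2}\bigr)dr$ and $c_1(\mathcal{E}^{\vee}(m+3))^{2}=c_1(\mathcal{E})^{2}$, from which condition (iii) follows since $c_2$ is likewise unchanged. The only difference is one of completeness: you should actually carry out the $c_2$ bookkeeping you defer (it reduces to $c_2(\mathcal{E}^{\vee}(m+3))=c_2(\mathcal{E})$, with the cross terms cancelling exactly as you anticipate).
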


\begin{proof}
Given that $h^1(\mathcal{E}^{\vee}(t+m+3))=h^1(\mathcal{E}(-t-3))=0$ for all $t \in \mathbb{Z}$ by Serre duality and the hypothesis that $\mathcal{E}$ is Ulrich, we have that $\mathcal{E}^{\vee}(m+3)$ satisfies condition (i) of Proposition \ref{agchern}.  The latter result implies that $c_1(\mathcal{E}) \cdot H = \bigl(\frac{m+3}{2}\bigr)dr;$ consequently, we have 
\begin{equation}
c_1(\mathcal{E}^{\vee}(m+3)) \cdot H = dr(m+3)-c_1(\mathcal{E}) \cdot H = \biggl(\frac{m+3}{2}\biggr)dr
\end{equation}
\begin{equation}
\frac{c_1(\mathcal{E}^{\vee}(m+3))^{2}}{2}=\frac{c_1(\mathcal{E})^{2}}{2}-(m+3)rc_1(\mathcal{E}) \cdot H+\biggl(\frac{(m+3)^2}{2}\biggr)dr = \frac{c_1(\mathcal{E})^{2}}{2}.
\end{equation}
It follows at once that $\mathcal{E}^{\vee}(m+3)$ satisfies conditions (ii) and (iii) in Proposition \ref{agchern}.  This concludes the proof.
\end{proof}

\begin{prop}
\label{repcurve}
If $\mathcal{E}$ is an Ulrich bundle of rank $r$ on an AG surface $X,$ then $c_1(\mathcal{E})$ is represented by a smooth curve on $X.$
\end{prop}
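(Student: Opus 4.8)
The plan is to exhibit the required curve as a general member of the complete linear system $|c_1(\mathcal{E})| = |\det\mathcal{E}|$ and to invoke Bertini's theorem, which is available because we work over an algebraically closed field of characteristic zero. The two things that actually need checking are that this linear system is base-point-free and that its general member is a genuine (nonempty) curve rather than the zero divisor.

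First I would recall from (i) of Corollary \ref{cliffbasics} that an Ulrich bundle $\mathcal{E}$ is globally generated, i.e.\ the evaluation map $H^0(\mathcal{E}) \otimes \mathcal{O}_X \to \mathcal{E}$ is surjective. Since forming top exterior powers preserves surjectivity of sheaf maps, $\det\mathcal{E} = \wedge^{r}\mathcal{E}$ is a quotient of the (free, hence globally generated) sheaf $\wedge^{r}\bigl(H^0(\mathcal{E}) \otimes \mathcal{O}_X\bigr)$, and is therefore itself globally generated. Concretely, at any point $x \in X$ one chooses $r$ sections of $\mathcal{E}$ whose values form a basis of the fibre $\mathcal{E}(x)$; their wedge is then a section of $\det\mathcal{E}$ that is nonzero at $x$. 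Hence $|c_1(\mathcal{E})| = |\det\mathcal{E}|$ has empty base locus.

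Next I would apply Bertini's theorem: the general member of a base-point-free linear system on a smooth variety in characteristic zero is smooth. Thus a general $C \in |c_1(\mathcal{E})|$ is a smooth divisor on the surface $X$, i.e.\ a smooth curve, and by construction it represents the class $c_1(\mathcal{E})$. To confirm that $C$ is a nonzero divisor, I would use (ii) of Proposition \ref{agchern} to compute $c_1(\mathcal{E}) \cdot H = \bigl(\frac{m+3}{2}\bigr)dr$, which is strictly positive because $m \geq -2$ (Lemma \ref{mbound}), $d \geq 2$, and $r \geq 1$. A class with positive intersection against the ample $H$ cannot be trivial, so $\det\mathcal{E}$ is nontrivial and the general member of the base-point-free system $|c_1(\mathcal{E})|$ is a genuine smooth curve.

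The argument is essentially immediate once global generation is in hand, so there is no serious obstacle; the only points requiring minor care are the passage from global generation of $\mathcal{E}$ to that of $\det\mathcal{E}$ and the verification via $c_1(\mathcal{E}) \cdot H > 0$ that the general member is nonempty. (If one wanted the stronger conclusion that $c_1(\mathcal{E})$ is represented by a \emph{connected} smooth curve, one would need an additional connectedness input, but the stated proposition asks only for smoothness.)
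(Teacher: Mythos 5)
Your proof is correct and follows essentially the same route as the paper's: global generation of $\mathcal{E}$ from Corollary \ref{cliffbasics} gives global generation of $\det\mathcal{E}$, positivity of $c_1(\mathcal{E})\cdot H$ via Lemma \ref{mbound} and Proposition \ref{agchern} rules out the trivial bundle, and Bertini finishes. The extra details you supply (the wedge argument for global generation of $\det\mathcal{E}$, the remark on connectedness) are consistent with, and slightly more explicit than, the paper's version.
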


\begin{proof}
Since $\mathcal{E}$ is Ulrich, it is globally generated by (i) of Corollary \ref{cliffbasics}, and thus $\det{\mathcal{E}}$ is globally generated as well.  By Lemma \ref{mbound} and (ii) of Proposition \ref{agchern}, we have that $c_1(\mathcal{E}) \cdot H > 0,$ so $\det{\mathcal{E}}$ cannot be the trivial line bundle.  This implies that the linear system $|\det{\mathcal{E}}|$ is basepoint-free of dimension 1 or greater; the result then follows from Bertini's theorem.  
\end{proof}

To motivate the next result, we give a brief account of the central construction in the proof of Theorem \ref{main}, which has appeared in the literature under the names \textit{elementary modification} (e.g. \cite{Fri}) and \textit{elementary transformation} (e.g. \cite{HL}).  If $C$ is a smooth irreducible curve of genus $g$ contained in a surface $X,$ and $\mathcal{L}$ is a globally generated line bundle on $C$ with $r \geq 1$ global sections, then the rank-$r$ vector bundle $\mathcal{E}$ which occurs in the exact sequence
\begin{equation}
\label{elm1}
0 \rightarrow \mathcal{E}^{\vee} \rightarrow H^0(\mathcal{L}) \otimes \mathcal{O}_{X} \rightarrow \mathcal{L} \rightarrow 0
\end{equation}
satisfies $c_1(\mathcal{E})=[C]$ and $c_2(\mathcal{E})=c_1(\mathcal{L}).$  One expects from Riemann-Roch that $c_1(\mathcal{L})=g-1+r,$ and this is reasonable since the general line bundle of degree $g-1+r$ is globally generated (Lemma \ref{bpfln}).  Given that we are interested in the case where $\mathcal{E}$ is Ulrich, it makes sense to determine when the second Chern class computed in Proposition \ref{agchern} is equal to $g-1+r.$

Observe that the hypothesis below on the first Chern class is satisfied whenever $X$ admits an Ulrich bundle, thanks to Proposition \ref{repcurve}, and that the smooth curve representing this first Chern class is not assumed to be connected.
\begin{prop}
\label{onlydelpezzo}
Let $X \subseteq \mathbb{P}^{n}$ be an AG surface of degree $d \geq 2,$ and suppose that $X$ admits a Ulrich bundle $\mathcal{E}$ of rank $r$ on $X$ whose first Chern class $c_1(\mathcal{E})$ is represented by a smooth curve $C$ of arithmetic genus $g.$ 

If $c_2(\mathcal{E})= g-1+r,$ then $X$ is either a smooth quadric surface in $\mathbb{P}^{3}$ or a del Pezzo surface.
\end{prop}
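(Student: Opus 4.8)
The plan is to convert the hypothesis $c_2(\mathcal{E}) = g - 1 + r$ into a single numerical identity constraining $m$, and then to show that this identity is incompatible with the arithmetic of an ACM surface unless $m \in \{-2,-1\}$. First I would invoke Proposition \ref{agchern}: since $\mathcal{E}$ is Ulrich, $c_1(\mathcal{E}) \cdot H = \left(\frac{m+3}{2}\right)dr$ and $c_2(\mathcal{E}) = \frac{c_1(\mathcal{E})^2}{2} - dr\left(\frac{m^2+3m+4}{4}\right) + r(1 + h^0(K_X))$. Writing $C$ for the smooth curve representing $c_1(\mathcal{E})$ and applying adjunction on $X$, one has $2g - 2 = C^2 + C\cdot K_X = c_1(\mathcal{E})^2 + m\left(\frac{m+3}{2}\right)dr$, so that $g - 1 + r = \frac{c_1(\mathcal{E})^2}{2} + \frac{m(m+3)}{4}dr + r$. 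Substituting both expressions into the hypothesis $c_2(\mathcal{E}) = g-1+r$, the terms $\frac{c_1(\mathcal{E})^2}{2}$ and $r$ cancel, and after collecting the remaining terms and dividing by $r \geq 1$ I expect to arrive at the clean identity
\[
h^0(K_X) = \frac{d}{2}(m+1)(m+2).
\]

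Next I would pin down $m$. By Lemma \ref{mbound} we already know $m \geq -2$, so it suffices to rule out $m \geq 0$. The key point is that $h^0(K_X) = h^0(\mathcal{O}_X(m))$ is the value at $m$ of the Hilbert function of $X$, which is projectively normal since it is ACM. Passing to the Artinian reduction $A$ of its homogeneous coordinate ring by a regular sequence of three general linear forms, the Cohen-Macaulay property gives, for $m \geq 0$, the expression $h^0(\mathcal{O}_X(m)) = \sum_{j\geq0} h_A(j)\binom{m-j+2}{2}$, where the $h_A(j)$ are nonnegative, $h_A(0) = 1$, and $\sum_{j} h_A(j) = \deg X = d$. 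Since $\binom{m-j+2}{2} \leq \binom{m+2}{2}$ for every $j \geq 0$, this yields
\[
h^0(\mathcal{O}_X(m)) = \sum_{j\geq0} h_A(j)\binom{m-j+2}{2} \leq \binom{m+2}{2}\sum_{j\geq0} h_A(j) = \frac{d}{2}(m+1)(m+2),
\]
and the inequality is strict whenever some $h_A(j)$ with $j \geq 1$ is positive. As $d \geq 2$ forces $\sum_{j\geq1}h_A(j) = d - 1 > 0$, the bound is strict for all $m \geq 0$, contradicting the identity above. Hence $m \in \{-2, -1\}$.

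Finally, I would read off the geometry. If $m = -2$, then Lemma \ref{mbound} identifies $X$ as a smooth quadric surface in $\mathbb{P}^3$. If $m = -1$, then $-K_X = H$ is very ample, hence ample, so $X$ is a del Pezzo surface. I expect the only real obstacle to be the middle step: one must know that the ACM hypothesis forces the Hilbert function to obey the Macaulay-type bound $h^0(\mathcal{O}_X(m)) \leq d\binom{m+2}{2}$ \emph{strictly}. The derivation of the numerical identity and the final case analysis are routine by comparison; it is the use of the Cohen-Macaulay structure, through the Artinian reduction, to control $h^0(K_X) = h^0(\mathcal{O}_X(m))$ that does the essential work. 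Without it, a subcanonical surface of general type such as a quintic in $\mathbb{P}^3$ (where $m = 1$) would not be excluded on purely numerical grounds, even though its actual $h^0(K_X)$ is far smaller than $\frac{d}{2}(m+1)(m+2)$.
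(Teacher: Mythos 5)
Your proposal is correct, and the first and last steps coincide with the paper's: both derive the identity $h^0(K_X)=\tfrac{d(m+1)(m+2)}{2}$ from Proposition \ref{agchern} and adjunction, and both conclude via Lemma \ref{mbound} once $m\in\{-2,-1\}$ is established. Where you genuinely diverge is in ruling out $m\ge 0$. The paper splits into two cases: for $m=0$ the identity collapses to $d=1$; for $m\ge 1$ it observes that $K_X$ ample forces $X$ to be a minimal surface of general type and invokes Noether's inequality $h^0(K_X)\le \tfrac{K_X^2}{2}+2=\tfrac{dm^2}{2}+2$, which contradicts the identity since $d\ge 2$. You instead exploit the ACM hypothesis directly: projective normality identifies $h^0(\mathcal{O}_X(m))$ with the degree-$m$ Hilbert function of the coordinate ring, and the Cohen--Macaulay property gives the $h$-vector expression $\sum_j h_A(j)\binom{m-j+2}{2}$ with $h_A(j)\ge 0$, $h_A(0)=1$, $\sum_j h_A(j)=d$, whence $h^0(\mathcal{O}_X(m))<d\binom{m+2}{2}$ strictly for $m\ge 0$ because $d\ge 2$ forces some $h_A(j)>0$ with $j\ge 1$. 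This is a legitimate and arguably more self-contained route: it treats $m=0$ and $m\ge 1$ uniformly and uses only the graded structure of an ACM ring rather than the classification-theoretic input behind Noether's inequality; the paper's route, by contrast, needs nothing about Artinian reductions. One small point of hygiene: your sum should really run over $0\le j\le m$ (terms with $j>m$ contribute zero to the Hilbert function, and the polynomial extension of $\binom{m-j+2}{2}$ to very negative arguments is positive, so the blanket inequality $\binom{m-j+2}{2}\le\binom{m+2}{2}$ ``for every $j\ge 0$'' is only true with the combinatorial vanishing convention); this does not affect the validity of the bound or its strictness.
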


\begin{proof}
Let $m$ be the integer for which $K_{X}=mH.$  Since $2g-2=c_1(\mathcal{E})^{2}+c_1(\mathcal{E}) \cdot K_X=c_1(\mathcal{E})^{2}+\bigl(\frac{m^{2}+3m}{2}\bigr)dr,$ we have from (iii) of Proposition \ref{agchern} that $c_2(\mathcal{E})=g-1+r$ if and only if
\begin{equation}
\label{canonicalsections}
h^0(K_{X})=\frac{d(m+1)(m+2)}{2} 
\end{equation}  
By Lemma \ref{mbound}, we need only consider $m \geq -2.$  The condition (\ref{canonicalsections}) clearly holds for $m=-2$ and $m=-1.$  In the latter case, $X$ is a del Pezzo surface, and in the former case, Lemma \ref{mbound} implies that $X$ is a smooth quadric surface in $\mathbb{P}^{3}.$  It remains to show that (\ref{canonicalsections}) cannot hold for $m \geq 0.$  

Assume (\ref{canonicalsections}) holds for some $m \geq 0.$  If $m=0,$ then $K_X$ is the trivial bundle, and (\ref{canonicalsections}) reduces to $d=1,$ which is impossible.  If $m \geq 1,$ then $K_X$ is ample; this implies $X$ is a minimal surface of general type.  We then have from Noether's inequality that 
\begin{equation}
\frac{d(m+1)(m+2)}{2} = h^0(K_X) \leq \frac{K_X^{2}}{2}+2 = \frac{dm^{2}}{2}+2.
\end{equation}
After subtracting $\frac{dm^2}{2}$ from both sides, this reduces to $d\bigl(\frac{3m}{2}+1\bigr) \leq 2.$  Since $d \geq 2$ and $m \geq 1,$ we have a contradiction. 
\end{proof}

The following result, together with Proposition \ref{agchern}, will help us classify Ulrich line bundles on both quadric surfaces and del Pezzo surfaces.

\begin{prop}
\label{acmcurve}
Let $X \subseteq \mathbb{P}^{n}$ be an AG surface.  If $C \subseteq X$ is an ACM curve, then $\mathcal{O}_{X}(-C)$ and $\mathcal{O}_{X}(C)$ are ACM line bundles on $X.$
\end{prop}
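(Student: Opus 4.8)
The plan is to reinterpret every instance of ``ACM'' cohomologically and then reduce the two claims to a single surjectivity statement. For a line bundle on the surface $X$, being ACM means that intermediate cohomology vanishes, i.e. $H^1(X,\mathcal{O}_X(\pm C + tH))=0$ for all $t \in \mathbb{Z}$. For the curve $C \subseteq \mathbb{P}^n$, being ACM is equivalent to $H^1_{\ast}(\mathcal{I}_{C/\mathbb{P}^n})=0$, which via the ideal-sheaf sequence $0 \to \mathcal{I}_C \to \mathcal{O}_{\mathbb{P}^n} \to \mathcal{O}_C \to 0$ (and $H^1(\mathcal{O}_{\mathbb{P}^n}(t))=0$) is the same as the surjectivity of the restriction $H^0(\mathbb{P}^n,\mathcal{O}(t)) \to H^0(C,\mathcal{O}_C(t))$ for every $t$. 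Finally, $X$ being AG forces it to be ACM, so $H^1(X,\mathcal{O}_X(tH))=0$ for all $t$.

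The first step is to handle $\mathcal{O}_X(-C)$. Twisting the structure sequence $0 \to \mathcal{O}_X(-C) \to \mathcal{O}_X \to \mathcal{O}_C \to 0$ by $\mathcal{O}_X(tH)$ and taking cohomology gives, after invoking $H^1(\mathcal{O}_X(tH))=0$, an exact sequence
\begin{equation}
H^0(\mathcal{O}_X(tH)) \xrightarrow{\rho_t} H^0(\mathcal{O}_C(tH)) \to H^1(\mathcal{O}_X(-C+tH)) \to 0.
\end{equation}
Thus $\mathcal{O}_X(-C)$ is ACM if and only if $\rho_t$ is surjective for every $t$. To see this, I would observe that because $C \subseteq X \subseteq \mathbb{P}^n$, the full restriction map $H^0(\mathbb{P}^n,\mathcal{O}(t)) \to H^0(C,\mathcal{O}_C(t))$ factors as $H^0(\mathbb{P}^n,\mathcal{O}(t)) \to H^0(X,\mathcal{O}_X(tH)) \xrightarrow{\rho_t} H^0(C,\mathcal{O}_C(t))$. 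Since $C$ is ACM the composite is surjective, and hence so is its last factor $\rho_t$. This yields $H^1(\mathcal{O}_X(-C+tH))=0$ for all $t$, proving that $\mathcal{O}_X(-C)$ is ACM.

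The second step deduces the statement for $\mathcal{O}_X(C)$ from the one just proved, using the AG hypothesis $K_X=mH$. By Serre duality on the surface,
\begin{equation}
H^1(\mathcal{O}_X(C+tH)) \cong H^1\bigl(\mathcal{O}_X(-C+(m-t)H)\bigr)^{\ast}.
\end{equation}
As $t$ ranges over $\mathbb{Z}$ so does $s:=m-t$, and the first step gives $H^1(\mathcal{O}_X(-C+sH))=0$ for every $s$; hence $H^1(\mathcal{O}_X(C+tH))=0$ for all $t$, so $\mathcal{O}_X(C)$ is ACM as well. I expect the only genuinely delicate point to be the factorization argument in the first step: one must be careful that the surjectivity furnished by the ACM hypothesis on $C$ passes through $X$ rather than requiring any projective-normality input on $X$ itself. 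The Serre-duality reduction for $\mathcal{O}_X(C)$ is then essentially a bookkeeping step, crucially relying on $K_X$ being a multiple of $H$ so that twisting by $K_X$ stays within the family $\{\mathcal{O}_X(-C+sH)\}$.
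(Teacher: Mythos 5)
Your proof is correct and follows essentially the same route as the paper's: both reduce the claim for $\mathcal{O}_X(C)$ to that for $\mathcal{O}_X(-C)$ via Serre duality and the hypothesis $K_X=mH$, and both derive $H^1(\mathcal{O}_X(-C+tH))=0$ from the two inputs $H^1_{\ast}(\mathcal{I}_{C/\mathbb{P}^n})=0$ and $H^1_{\ast}(\mathcal{O}_X)=0$. The paper packages this through the exact sequence $0 \to \mathcal{I}_{X/\mathbb{P}^n} \to \mathcal{I}_{C/\mathbb{P}^n} \to \mathcal{O}_X(-C) \to 0$ rather than your factorization of restriction maps, but the content is identical.
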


\begin{proof}
By Serre duality and our hypothesis on $X$ in $\mathbb{P}^{n},$ it suffices to check that $\mathcal{O}_{X}(-C)$ is ACM.  Consider the exact sequence
\begin{equation}
0 \rightarrow \mathcal{I}_{X|\mathbb{P}^{n}} \rightarrow \mathcal{I}_{C|\mathbb{P}^{n}} \rightarrow \mathcal{O}_{X}(-C) \rightarrow 0
\end{equation}
We are now reduced to showing that $H^{1}(\mathcal{I}_{C|\mathbb{P}^{n}}(t))=0$ and $H^{2}(\mathcal{I}_{X|\mathbb{P}^{n}}(t))=0$ for all $t \in \mathbb{Z}.$  The first set of vanishings follows immediately from the fact that $C$ is ACM.  To verify the second set of vanishings, note that for all $t \in \mathbb{Z}$ the cohomology group $H^2(\mathcal{I}_{X|\mathbb{P}^{n}}(t))$ is a quotient of $H^{1}(\mathcal{O}_{X}(t)),$ and the latter is always zero since $X$ is ACM.
\end{proof}

We now characterize curves on a quadric surface which represent first Chern classes of line bundles. 

\begin{prop}
\label{everycurve}
If $X \subseteq \mathbb{P}^{3}$ is a smooth quadric surface, then every curve on $X$ represents the first Chern class of an Ulrich bundle on $X.$
\end{prop}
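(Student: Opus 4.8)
The plan is to realize $X$ concretely as $\mathbb{P}^1 \times \mathbb{P}^1$ embedded in $\mathbb{P}^3$ by the Segre embedding, identify the two rulings as Ulrich line bundles, and then build an Ulrich bundle with any prescribed first Chern class by taking an appropriate direct sum. Writing $f_1, f_2$ for the classes of the two rulings, we have ${\rm Pic}(X) = \mathbb{Z}f_1 \oplus \mathbb{Z}f_2$ with $f_1^2 = f_2^2 = 0$ and $f_1 \cdot f_2 = 1$; the hyperplane class is $H = f_1 + f_2$, so that $d = H^2 = 2$ and $K_X = -2H$, i.e. $m = -2$ and $h^0(K_X) = 0$ in the notation of Lemma \ref{mbound} and Proposition \ref{agchern}.

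First I would check that each ruling line bundle $\mathcal{O}_X(f_i)$ is Ulrich using Proposition \ref{agchern}. For the ACM condition (i), I would observe that under the Segre embedding each ruling is carried to a line in $\mathbb{P}^3$, which is ACM; hence $\mathcal{O}_X(f_i) = \mathcal{O}_X(L_i)$ is ACM by Proposition \ref{acmcurve}. (Alternatively one checks directly, via the K\"unneth formula, that $H^1(\mathcal{O}_X(f_i)(t)) = 0$ for all $t \in \mathbb{Z}$.) The numerical conditions are then immediate: with $r=1$ we have $c_1(\mathcal{O}_X(f_i)) \cdot H = f_i \cdot (f_1 + f_2) = 1 = \bigl(\frac{m+3}{2}\bigr)dr$, and since $f_i^2 = 0$ condition (iii) reduces to $c_2 = 0$, which holds for a line bundle. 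Thus both $\mathcal{O}_X(f_1)$ and $\mathcal{O}_X(f_2)$ are Ulrich.

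The key observation is that the classes $f_1$ and $f_2$ generate the effective cone of $X$: any curve $C$ on $X$ is an effective divisor, so $[C] = a f_1 + b f_2$ with $a, b \geq 0$. I would then set
\[
\mathcal{E} = \mathcal{O}_X(f_1)^{\oplus a} \oplus \mathcal{O}_X(f_2)^{\oplus b}.
\]
Since a direct sum is a (split) extension, repeated application of Proposition \ref{extcliff} shows that $\mathcal{E}$ is an Ulrich bundle, and additivity of the first Chern class gives $c_1(\mathcal{E}) = a f_1 + b f_2 = [C]$, as required.

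I do not anticipate a genuine obstacle here; the only points requiring care are the bookkeeping that makes the two rulings Ulrich line bundles and the elementary fact that every effective class on the quadric is a nonnegative integer combination of the two ruling classes. The construction also illustrates the contrast with the del Pezzo case: on the quadric the rank $r = a + b$ is forced by condition (ii) of Proposition \ref{agchern}, and the decomposable bundle above realizes every admissible pair $(c_1, r)$ without any theta-divisorial constraint, which is precisely why there is no analogue of Theorem \ref{main} in the quadric surface case.
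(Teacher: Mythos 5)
Your proof is correct and follows essentially the same route as the paper: both identify the two ruling classes as Ulrich line bundles via Proposition \ref{acmcurve} and the numerical criteria of Proposition \ref{agchern}, and then realize an arbitrary effective class $af_1+bf_2$ as $c_1$ of the direct sum $\mathcal{O}_X(f_1)^{\oplus a}\oplus\mathcal{O}_X(f_2)^{\oplus b}$, which is Ulrich by Proposition \ref{extcliff}. Your numerical checks ($c_1\cdot H=1$ and $c_2=0$ with $r=1$, $d=2$, $m=-2$, $h^0(K_X)=0$) match the paper's.
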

 
\begin{proof}
Let $F_1$ and $F_2$ be lines on $X$ whose classes generate ${\rm Pic}(X).$  Since $F_1$ and $F_2$ are complete intersections in $\mathbb{P}^{3},$ they are ACM curves, so Proposition \ref{acmcurve} implies that $\mathcal{O}_{X}(F_1)$ and $\mathcal{O}_{X}(F_2)$ are both ACM line bundles on $X.$  Moreover, for $i=1,2,$ the degree and second Chern class of $\mathcal{O}_{X}(F_{i})$ coincide with (ii) and (iii) in Proposition \ref{agchern}, respectively (after setting $r=1, d=2,m=-2,$ and $h^0(K_{X})=0$), so the latter result implies that $\mathcal{O}_{X}(F_{1})$ and $\mathcal{O}_{X}(F_{2})$ are Ulrich line bundles on $X.$

If $C$ is a curve on $X,$ then $C$ is linearly equivalent to $m_{1}F_{1}+m_{2}F_{2}$ for nonnegative integers $m_{1},m_{2}$ which are not both zero, so it represents the first Chern class of the vector bundle $\mathcal{O}_{X}(F_{1})^{\oplus m_{1}} \oplus \mathcal{O}_{X}(F_{2})^{\oplus m_{2}},$ which is Ulrich by Proposition \ref{extcliff}.
\end{proof}

\begin{rem}
The line bundles $\mathcal{O}_{X}(F_{1}-H)$ and $\mathcal{O}_{X}(F_{2}-H)$ are the spinor bundles on $X.$  More generally, if $X \subset \mathbb{P}^{n}$ is a smooth quadric hypersurface and $\mathcal{E}$ is a spinor bundle on $X,$ then $\mathcal{E}(1)$ is a stable Ulrich bundle on $X$; see \cite{Ott} for details. 
\end{rem}

\subsection{The case of del Pezzo surfaces}
In this subsection, we specialize the results of the previous section to the del Pezzo case.  From this section on, $X_{d}$ denotes a del Pezzo surface of degree $d \leq 9,$ and $H$ denotes the hyperplane class on $X.$  

The following result is a straightforward consequence of Proposition \ref{agchern}.

\begin{prop}
\label{chern}
A vector bundle $\mathcal{E}$ of rank $r$ on $X_{d}$ is Ulrich if and only if the following conditions are satisfied:
\begin{itemize}
\item[(i)]{$\mathcal{E}$ is ACM, and $h^0(\mathcal{E}(-1))=0.$}
\item[(ii)]{$c_{1}(\mathcal{E}) \cdot H=dr.$}
\item[(iii)]{$c_{2}(\mathcal{E})=\frac{c_1(\mathcal{E})^2-(d-2)r}{2}.$}
\end{itemize} \hfill \qedsymbol
\label{prop:Ulrichcharacterization}
\end{prop}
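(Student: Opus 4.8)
The plan is to derive this as a direct specialization of Proposition \ref{agchern}, so the first step is to record the two invariants of $X_d$ that the substitution requires. By the conventions of the paper, $X_d$ is embedded by its anticanonical series, so $H=-K_X$; that is, $K_X=mH$ with $m=-1$. Moreover $K_X=-H$ has negative degree against the ample class $H$, whence $h^0(K_X)=h^0(\mathcal{O}_{X_d}(-1))=0$. These two facts, $m=-1$ and $h^0(K_X)=0$, are all the input needed to convert Proposition \ref{agchern} into the present statement.

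Next I would substitute $m=-1$ into conditions (ii) and (iii) of Proposition \ref{agchern}. For (ii) one has $\tfrac{m+3}{2}=1$, so $c_1(\mathcal{E})\cdot H=\bigl(\tfrac{m+3}{2}\bigr)dr$ becomes $c_1(\mathcal{E})\cdot H=dr$, which is (ii) here. For (iii) one computes $\tfrac{m^2+3m+4}{4}=\tfrac12$ and $1+h^0(K_X)=1$, so that
\[
c_2(\mathcal{E})=\frac{c_1(\mathcal{E})^2}{2}-\frac{dr}{2}+r=\frac{c_1(\mathcal{E})^2-(d-2)r}{2},
\]
which is (iii) here. This is a routine algebraic reduction and presents no difficulty.

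The only point requiring care, and hence the nearest thing to an obstacle, is reconciling condition (i): in Proposition \ref{agchern} it asks only that $\mathcal{E}$ be ACM, whereas here it additionally asks $h^0(\mathcal{E}(-1))=0$. I would handle the two directions separately. For the forward implication, an Ulrich bundle is ACM and, by (iii) of Corollary \ref{cliffbasics}, normalized, so $h^0(\mathcal{E}(-1))=0$ holds automatically; combined with the substituted (ii) and (iii), this yields (i)--(iii). For the converse, conditions (i)--(iii) include in particular that $\mathcal{E}$ is ACM together with the two Chern-class identities, which are exactly the hypotheses of the backward direction of Proposition \ref{agchern} after the substitution; that result then forces $\mathcal{E}$ to be Ulrich. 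I would note explicitly that the extra vanishing $h^0(\mathcal{E}(-1))=0$ is not used in this direction, so it is forced for Ulrich bundles and harmless for the converse, and the stated equivalence is precisely the specialization of Proposition \ref{agchern}.
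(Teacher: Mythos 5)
Your proposal is correct and matches the paper's intent exactly: the paper states Proposition \ref{chern} as a direct consequence of Proposition \ref{agchern}, obtained by substituting $m=-1$ and $h^0(K_X)=0$ for the anticanonically embedded del Pezzo surface, which is precisely the computation you carry out. Your additional observation that the vanishing $h^0(\mathcal{E}(-1))=0$ in (i) follows from Corollary \ref{cliffbasics}(iii) in the forward direction and is not needed in the converse is an accurate account of why the extra clause is harmless.
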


Note that statement (ii) below is false in the quadric surface case.

\begin{lem}
\label{numprop}
Let $\mathcal{E}$ be an Ulrich bundle of rank $r$ on $X_{d}.$  Then we have the following:
\begin{itemize}
\item[(i)]{$\det{\mathcal{E}}$ is globally generated.  In particular, $c_1(\mathcal{E})$ is nef.}
\item[(ii)]{The general member of the linear system $|\det{\mathcal{E}}|$ is smooth and irreducible.}
\end{itemize}
\end{lem}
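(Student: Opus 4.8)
=== BEGIN LaTeX PROOF PROPOSAL ===

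\textbf{Plan.} The statement has two parts, and I would prove (i) first, then use it to bootstrap (ii). For part (i), the natural route is to invoke Corollary \ref{cliffbasics}(i): an Ulrich bundle $\mathcal{E}$ is globally generated. Global generation of $\mathcal{E}$ passes to its top exterior power, so $\det{\mathcal{E}}$ is globally generated as well; this is exactly the argument already used in the proof of Proposition \ref{repcurve}. A globally generated line bundle has nef first Chern class, since its class is represented by an effective base-point-free divisor and hence meets every curve nonnegatively. So $c_1(\mathcal{E})$ is nef, and (i) follows with essentially no new input beyond Corollary \ref{cliffbasics}.

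\textbf{Irreducibility via positivity.} For part (ii), the strategy is to show that the complete linear system $|\det{\mathcal{E}}|$ is base-point-free of dimension at least $1$, and then to upgrade Bertini from ``smooth'' to ``smooth and irreducible'' by establishing that $\det{\mathcal{E}}$ is not composed with a pencil and that a general member is connected. Base-point-freeness comes from (i). For the dimensionality, I would note that $c_1(\mathcal{E}) \cdot H = dr > 0$ by Proposition \ref{chern}(ii), so $\det{\mathcal{E}}$ is nontrivial; combined with base-point-freeness this forces $\dim|\det{\mathcal{E}}| \geq 1$, exactly as in Proposition \ref{repcurve}. Bertini's theorem then gives a smooth general member. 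The content of (ii) beyond Proposition \ref{repcurve} is the \emph{irreducibility}, which is where I expect to do real work.

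\textbf{The main obstacle: irreducibility/connectedness.} The hard part is ruling out that a general member of $|\det{\mathcal{E}}|$ splits into several components. On a del Pezzo surface one has strong tools for this. The cleanest approach is to show that $\det{\mathcal{E}}$ is \emph{ample}, or at least that $-K_X \cdot c_1(\mathcal{E}) = c_1(\mathcal{E}) \cdot H > 0$ together with nefness forces connectedness of the general member. Concretely, since $X_d$ is del Pezzo we have $-K_X = H$ ample, and a nef divisor $L$ with $L \cdot (-K_X) > 0$ and $L^2 > 0$ is big and nef; a big and nef base-point-free divisor on a surface has connected general member by the following standard reasoning: if $|L|$ were composed with a pencil, the general member would be a sum of $\geq 2$ members of a base-point-free pencil $|P|$, forcing $L^2 = 0$ if $P^2=0$, contradicting $L^2 > 0$. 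Thus I would first verify $c_1(\mathcal{E})^2 > 0$ — which should follow from $c_1(\mathcal{E}) \cdot H = dr$ and the Hodge index theorem, since a nef nonzero class meeting the ample $H$ positively cannot have negative self-intersection, and self-intersection zero would contradict positivity of the Hilbert polynomial's leading behavior. Once $c_1(\mathcal{E})$ is shown big and nef, connectedness of the general smooth member follows, and Bertini gives smoothness, yielding a smooth irreducible curve.

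\textbf{Summary of the order of steps.} I would (1) invoke Corollary \ref{cliffbasics}(i) to get $\det{\mathcal{E}}$ globally generated, hence $c_1(\mathcal{E})$ nef, proving (i); (2) use Proposition \ref{chern}(ii) to get $c_1(\mathcal{E}) \cdot H = dr > 0$, so $\det{\mathcal{E}}$ is nontrivial and base-point-free of dimension $\geq 1$; (3) apply the Hodge index theorem on $X_d$ to conclude $c_1(\mathcal{E})^2 > 0$, so $c_1(\mathcal{E})$ is big and nef; (4) conclude the linear system is not composed with a pencil and a general member is connected, then apply Bertini for smoothness, giving (ii). The delicate step is (3)–(4), i.e. extracting bigness and ruling out a pencil decomposition; everything else is a direct citation of earlier results in the excerpt.

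=== END LaTeX PROOF PROPOSAL ===
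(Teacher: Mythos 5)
Your part (i) and the overall architecture of part (ii) — global generation, positivity of $c_1(\mathcal{E})^2$, then Bertini plus a Stein-factorization/pencil argument for irreducibility — match the paper's proof. The one genuine gap is your step (3), the justification that $c_1(\mathcal{E})^2 > 0$. The Hodge index theorem only gives the \emph{upper} bound $c_1(\mathcal{E})^2 \leq (c_1(\mathcal{E}) \cdot H)^2/H^2 = dr^2$; it cannot rule out $c_1(\mathcal{E})^2 = 0$. Nefness gives $c_1(\mathcal{E})^2 \geq 0$, but your claim that ``self-intersection zero would contradict positivity of the Hilbert polynomial's leading behavior'' does not hold up: the leading coefficient of the Hilbert polynomial of $\mathcal{E}$ is $\frac{dr}{2} = \frac{r H^2}{2}$, which involves $H^2$, not $c_1(\mathcal{E})^2$, so it carries no information about the latter. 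Moreover, nef classes of positive degree with self-intersection zero genuinely exist on del Pezzo surfaces — e.g.\ a conic class such as $\ell - e_1$ on a blown-up plane satisfies $(\ell - e_1)^2 = 0$ and $(\ell - e_1) \cdot (-K) > 0$ — so no argument using only nefness and positive degree can succeed here.

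The input you are missing is that \emph{the bundle $\mathcal{E}$ itself}, not merely its determinant, is globally generated, which forces $c_2(\mathcal{E}) \geq 0$. Combining this with the Chern class identity of Proposition \ref{chern}(iii), namely $c_1(\mathcal{E})^2 = 2c_2(\mathcal{E}) + (d-2)r$, yields $c_1(\mathcal{E})^2 \geq (d-2)r > 0$ since $d \geq 3$. This is exactly how the paper closes the gap; once $c_1(\mathcal{E})^2 > 0$ is in hand, the paper (like you) concludes by applying Bertini and Lefschetz to the finite part of the Stein factorization of the morphism defined by $|\det{\mathcal{E}}|$. With your step (3) replaced by this Chern class computation, your proof goes through.
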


\begin{proof}
(i) follows immediately from (i) of Corollary \ref{cliffbasics}.  If the degree $c_1(\mathcal{E})^2$ of the morphism $f:X_d \rightarrow |\det{\mathcal{E}}|^{\ast}$ associated to $\det{\mathcal{E}}$ is positive, then (ii) follows from applying the Bertini and Lefschetz theorems to the finite part of the Stein factorization of $f.$  The fact that $\mathcal{E}$ is globally generated implies that $c_2(\mathcal{E}) \geq 0,$ which implies in turn (via Corollary \ref{chern}) that $c_1(\mathcal{E})^{2} \geq (d-2)r > 0;$ thus (ii) is proved.
\end{proof}

\begin{prop}
Let $\mathcal{L}$ be a line bundle on a degree-$d$ del Pezzo surface $X_{d}.$  Then $\mathcal{L}$ is an Ulrich line bundle on $X_{d}$ if and only if $\mathcal{L} \cong \mathcal{O}_{X_{d}}(Q),$ where $Q$ is the class of a rational normal curve on $X_{d}.$
\end{prop}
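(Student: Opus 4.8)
The plan is to prove both implications using the characterization of Ulrich line bundles provided by Proposition \ref{chern} together with the ACM criterion in Proposition \ref{acmcurve}. For a line bundle $\mathcal{L} = \mathcal{O}_{X_d}(Q)$ on $X_d$, I note that an Ulrich \emph{line} bundle has rank $r=1$, so the numerical conditions (ii) and (iii) of Proposition \ref{chern} become $Q \cdot H = d$ and $Q^2 = d-2$. The first task is to show that these two numerical equations, taken together with the requirement that $|Q|$ be represented by an irreducible curve, force $Q$ to be a rational normal curve of degree $d$ in $\mathbb{P}^d$.

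For the forward direction, suppose $\mathcal{L}$ is an Ulrich line bundle. By Proposition \ref{repcurve} the class $c_1(\mathcal{L}) = Q$ is represented by a smooth curve, and by Lemma \ref{numprop} the general member of $|Q|$ is smooth and irreducible. The numerical constraints $Q \cdot H = d$ and $Q^2 = d-2$ then pin down the genus via the adjunction formula on a del Pezzo surface, where $K_{X_d} = -H$: computing $2g-2 = Q^2 + Q \cdot K_{X_d} = (d-2) + (-d) = -2$, so $g = 0$. Thus a general smooth $C \in |Q|$ is a rational curve of degree $Q \cdot H = d$ in $\mathbb{P}^d$. A rational curve of degree $d$ spanning $\mathbb{P}^d$ is precisely a rational normal curve, provided we verify that $C$ is nondegenerate (does not lie in a hyperplane); this should follow from the Ulrich condition $h^0(\mathcal{L}(-1)) = 0$ in part (i) of Proposition \ref{chern}, which is exactly the statement that $C$ spans its ambient space.

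For the converse, suppose $Q$ is the class of a rational normal curve of degree $d$. A rational normal curve in $\mathbb{P}^d$ is a complete intersection, hence ACM, so by Proposition \ref{acmcurve} the line bundle $\mathcal{O}_{X_d}(Q)$ is ACM, giving condition (i) of Proposition \ref{chern} (the vanishing $h^0(\mathcal{L}(-1))=0$ following from nondegeneracy of the curve as above). It then remains to check the two numerical conditions (ii) and (iii): the degree condition $Q \cdot H = d$ is immediate since $C$ has degree $d$ in $\mathbb{P}^d$, and the second Chern class condition $Q^2 = d-2$ follows by reversing the adjunction computation using $g=0$. With (i), (ii), (iii) all verified, Proposition \ref{chern} gives that $\mathcal{O}_{X_d}(Q)$ is Ulrich.

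The main obstacle I anticipate is the ACM verification for the rational normal curve and, more subtly, confirming that the numerical class $Q$ with $Q \cdot H = d$, $Q^2 = d-2$ is \emph{genuinely} represented by a rational normal curve rather than merely a rational curve of the right degree and self-intersection — in particular that a general smooth member of $|Q|$ is irreducible and nondegenerate. The irreducibility is supplied by Lemma \ref{numprop}(ii), and nondegeneracy should be extracted from the normalization condition $h^0(\mathcal{L}(-1)) = 0$ of Corollary \ref{cliffbasics}(iii). Care will be needed to ensure these two conditions are compatible with a curve of degree $d$ and genus $0$ being forced to span all of $\mathbb{P}^d$; this is where the specific geometry of the anticanonical embedding of the del Pezzo surface enters, since it guarantees that such low-degree rational curves embed as rational normal curves.
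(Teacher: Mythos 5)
Your overall strategy coincides with the paper's: in the forward direction the numerical conditions of Proposition \ref{chern} (equivalently, $\chi(\mathcal{L})=h^0(\mathcal{L})=d$ together with Riemann--Roch) give $Q^2=d-2$, adjunction gives $g=0$, and a nondegenerate rational curve of degree $d$ in $\mathbb{P}^d$ is then a rational normal curve; in the converse direction Proposition \ref{acmcurve} supplies the ACM condition and the two Chern class conditions are immediate. However, two of your supporting justifications fail as stated. First, a rational normal curve of degree $d\geq 3$ in $\mathbb{P}^d$ is \emph{not} a complete intersection (the twisted cubic in $\mathbb{P}^3$ is the classical counterexample: it has codimension $2$ but its homogeneous ideal requires three generators). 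The curve $Q$ is nevertheless ACM because it is projectively normal --- the maps $\textnormal{Sym}^k H^0(\mathcal{O}_{\mathbb{P}^d}(1)) \rightarrow H^0(\mathcal{O}_{\mathbb{P}^1}(kd))$ are surjective and $H^1(\mathcal{O}_{\mathbb{P}^1}(kd))=0$ for $k\geq 0$ --- so the hypothesis of Proposition \ref{acmcurve} does hold, but not for the reason you give, and since the paper's standing convention is $d\geq 3$ this affects every case under consideration.

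Second, the vanishing $h^0(\mathcal{L}(-1))=h^0(\mathcal{O}_{X_d}(Q-H))=0$ is \emph{not} the statement that $Q$ spans $\mathbb{P}^d$: it says that $Q-H$ is not effective, whereas nondegeneracy of $Q$ is the statement that $H-Q$ is not effective (a hyperplane containing $Q$ cuts out a member of $|H|$ having $Q$ as a component). The vanishing you actually need, $h^0(\mathcal{O}_{X_d}(H-Q))=0$, can be obtained either from Proposition \ref{agdual} (which shows $\mathcal{L}^{\vee}(2)$ is Ulrich, hence normalized, so $h^0(\mathcal{L}^{\vee}(1))=0$), or more directly by observing that an effective divisor $R$ with $R\cdot H=0$ on a surface with $H$ ample must be zero, so that $Q$ contained in a hyperplane section would force $Q\in |H|$, contradicting $Q^2=d-2\neq d=H^2$. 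With these two repairs your argument goes through and is essentially the proof given in the paper.
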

\begin{proof}
($\Rightarrow$) Let $\mathcal{L}$ be an Ulrich line bundle on $X_d.$   Since ${\chi}(\mathcal{L})=h^{0}(\mathcal{L})=d,$ we have from Riemann-Roch that $c_{1}(\mathcal{L})^{2}=d-2.$  If $Q$ is a smooth irreducible member of $|\mathcal{L}|$ (which exists by Lemma \ref{numprop}), then the adjunction formula implies that the genus of $Q$ is equal to 0.  Since the degree of $Q$ is $d,$ it follows that $Q$ is a rational normal curve.

($\Leftarrow$) Fix a rational normal curve $Q \subseteq X_{d}.$  Since $Q$ is of degree $d$ and $c_{2}(\mathcal{O}_{X_{d}}(Q))=0,$ it suffices by Corollary \ref{chern} to check that $\mathcal{O}_{X_{d}}(Q)$ is ACM.  But this follows from Proposition \ref{acmcurve}.
\end{proof}

$X_d$ does not contain any rational normal curves of degree $d$ whenever $d=8$ or $d=9,$ so the following is immediate.

\begin{cor}
$X_d$ does not admit an Ulrich line bundle when $d=8$ or $d=9.$ \hfill \qedsymbol
\end{cor}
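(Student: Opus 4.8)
The plan is to deduce the statement immediately from the preceding proposition, which identifies the Ulrich line bundles on $X_d$ with the line bundles $\mathcal{O}_{X_d}(Q)$ attached to rational normal curves $Q$ of degree $d$. Thus the entire content reduces to the purely geometric claim that neither $X_8$ nor $X_9$ contains a rational normal curve of degree $d$, and I would isolate this as the only thing to prove.

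First I would record the numerical signature of such a curve. A rational normal curve $Q \subseteq X_d \subseteq \mathbb{P}^d$ is smooth, irreducible, of genus $0$, and of degree $Q \cdot H = d$. Since $K_{X_d} = -H$, the adjunction formula gives $-2 = Q^2 + Q \cdot K_{X_d} = Q^2 - d$, so that $Q^2 = d-2$. Hence it suffices to show that for $d = 8, 9$ there is no class $Q \in {\rm Pic}(X_d)$ simultaneously satisfying $Q \cdot H = d$ and $Q^2 = d-2$; such a nonexistence statement is visibly incompatible with $Q$ being represented by any smooth rational curve, rational normal or otherwise.

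The case $d = 9$ is a one-line check: here $X_9 = \mathbb{P}^2$ with ${\rm Pic}(X_9) = \mathbb{Z}\ell$ and $H = 3\ell$, so $Q = a\ell$ with $Q \cdot H = 3a = 9$ forces $a = 3$ and hence $Q^2 = 9 \neq 7 = d-2$. For $d = 8$ I would work in the Picard lattice of $X_8 = {\rm Bl}_1 \mathbb{P}^2$, namely $\langle \ell, e \rangle$ with $\ell^2 = 1$, $e^2 = -1$, $\ell \cdot e = 0$, and $H = -K_{X_8} = 3\ell - e$. Writing $Q = a\ell - be$, the two conditions become $3a - b = 8$ and $a^2 - b^2 = 6$; eliminating $b = 3a-8$ yields $4a^2 - 24a + 35 = 0$, whose discriminant $16$ forces $a \in \{5/2, 7/2\}$, so there is no integral solution.

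The main obstacle is the $d = 8$ case, and the genuine subtlety there is not the Diophantine computation itself but the choice of surface: there are two deformation types of degree-$8$ del Pezzo surface, and the conclusion depends on which is meant. One must use the model $X_8 = {\rm Bl}_1\mathbb{P}^2 = \mathbb{F}_1$ consistent with the convention of this paper, under which the quadric $\mathbb{P}^1 \times \mathbb{P}^1$ is treated separately as an arithmetically Gorenstein surface. Indeed, the anticanonically embedded $\mathbb{P}^1 \times \mathbb{P}^1 \subseteq \mathbb{P}^8$ \emph{does} carry a rational normal curve of degree $8$ --- the $(3,1)$-curve restricts $\mathcal{O}(2,2)$ to $\mathcal{O}_{\mathbb{P}^1}(8)$ embedded by its complete linear system --- so pinning down the intended surface is essential for the statement to hold.
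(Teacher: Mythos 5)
Your proof is correct and takes the same route as the paper, which simply asserts that $X_d$ contains no rational normal curve of degree $d$ for $d=8,9$ and declares the corollary immediate from the preceding proposition. You go further by actually supplying the lattice verification (the computations for $d=9$ and for $X_8=\mathbb{F}_1$ both check out) and by correctly flagging that the anticanonical $\mathbb{P}^1\times\mathbb{P}^1\subseteq\mathbb{P}^8$ \emph{does} carry degree-$8$ rational normal curves, a caveat the paper leaves implicit.
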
 

\textit{Remark:}  There exist rank-2 Ulrich bundles on cubic surfaces whose first Chern class cannot be represented by a smooth ACM curve.  Indeed, any smooth curve $C$ representing the first Chern class of a bundle of type (A.3) in Theorem 1 of \cite{Fae} is of genus 2 and degree 6 in $\mathbb{P}^3.$  Riemann-Roch implies that $C$ is not linearly normal, and therefore not ACM.      

\begin{prop}
\label{csquared}
Let $X \subseteq \mathbb{P}^{d}$ be a del Pezzo surface of degree $d \geq 3,$ and let $\mathcal{E}$ be an Ulrich bundle on $X$ of rank $r \geq 1.$  Then
\begin{equation}
\label{ineq}
(d-2)r^{2} \leq c_{1}(\mathcal{E})^{2} \leq dr^{2}
\end{equation}
\end{prop}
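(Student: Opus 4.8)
The plan is to obtain the two bounds from two classical inequalities on a smooth projective surface, using as input the numerical data recorded in Proposition \ref{chern} together with the semistability established in Proposition \ref{giesstab}. Throughout I write $c_i = c_i(\mathcal{E})$ and recall that $H^2 = d$ for a degree-$d$ del Pezzo surface, while Proposition \ref{chern} supplies $c_1 \cdot H = dr$ and $c_2 = \tfrac{1}{2}\bigl(c_1^2 - (d-2)r\bigr)$. The point is that neither bound requires any computation specific to $X_d$ beyond these two facts.

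For the upper bound I would invoke the Hodge Index Theorem. Since $H$ is ample, so that $H^2 = d > 0$, the theorem yields $(c_1 \cdot H)^2 \ge c_1^2 \cdot H^2$ for the divisor class $c_1$. Substituting $c_1 \cdot H = dr$ and $H^2 = d$ gives $d^2 r^2 \ge d\, c_1^2$, and dividing through by $d > 0$ produces $c_1^2 \le d r^2$, which is the right-hand inequality. (Equality will hold precisely when $c_1$ is numerically proportional to $H$.)

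For the lower bound I would appeal to the Bogomolov inequality. By Proposition \ref{giesstab}, $\mathcal{E}$ is Gieseker-semistable, hence slope-semistable with respect to $H$; since we work in characteristic zero, Bogomolov's theorem then applies and gives nonnegativity of the discriminant, $\Delta(\mathcal{E}) := 2r\, c_2 - (r-1)c_1^2 \ge 0$. Substituting the value of $c_2$ from Proposition \ref{chern}, one computes $2r\, c_2 = r c_1^2 - (d-2)r^2$, whence $\Delta(\mathcal{E}) = c_1^2 - (d-2)r^2$. The inequality $\Delta(\mathcal{E}) \ge 0$ therefore reads $c_1^2 \ge (d-2)r^2$, which is the left-hand inequality. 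Note that the naive estimate $c_2 \ge 0$ coming from global generation (Corollary \ref{cliffbasics}) yields only the weaker bound $c_1^2 \ge (d-2)r$, so semistability is genuinely needed here.

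The only delicate point is the passage from Gieseker-semistability to the slope-semistability required by Bogomolov's theorem, i.e. the implication Gieseker-semistable $\Rightarrow$ $\mu$-semistable, which is standard in characteristic zero. Once that is in hand, both inequalities reduce to the two substitutions above, and I expect the write-up to be short.
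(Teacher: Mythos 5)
Your proposal is correct and matches the paper's own argument essentially verbatim: the upper bound via the Hodge Index Theorem applied to $c_1(\mathcal{E})$ and $H$, and the lower bound via Bogomolov's inequality (using Proposition \ref{giesstab} and the standard implication from Gieseker- to slope-semistability in characteristic zero) combined with the identity $c_1(\mathcal{E})^2 = 2c_2(\mathcal{E}) + (d-2)r$ from Proposition \ref{chern}. No gaps.
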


\begin{proof}
Let $\mathcal{E}$ be an Ulrich bundle on $X_d$ of rank $r.$  Then the Hodge Index Theorem implies that $c_1(\mathcal{E})^{2} \leq \frac{(c_1(\mathcal{E}) \cdot H)^2}{H^2}=dr^{2}.$ 

By Proposition \ref{giesstab}, $\mathcal{E}$ is semistable in the sense of Gieseker, so it is also semistable in the sense of Mumford.  We may then use Bogomolov's inequality (e.g. Theorem 3.4.1 in \cite{HL}) to deduce that $\frac{r-1}{2r}{c_1(\mathcal{E})}^{2} \leq c_2(\mathcal{E}).$  Combining this with the identity $c_1(\mathcal{E})^{2}=2c_2(\mathcal{E})+(d-2)r$ gives the desired inequality $c_1(\mathcal{E})^{2} \geq (d-2)r^{2}.$  
\end{proof}

The lower bound in (\ref{ineq}) is sharp for $3 \leq d \leq 7.$  Indeed, if $Q$ is any rational normal curve on $X_d$ for $d$ in this range, then the rank-$r$ vector bundle $\mathcal{E}:=\mathcal{O}_{X_d}(Q)^{\oplus r}$ is Ulrich with first Chern class $rQ,$ so that $c_1(\mathcal{E})^{2}=(d-2)r^2.$  The sharpness of the upper bound will be discussed in Proposition \ref{hypclass}.  

\section{Generalized Theta-Divisors and Ulrich Bundles}

This section contains the proof of Theorem \ref{main}, which is an immediate consequence of Theorems \ref{mainprop1} and \ref{mainprop2}.  We begin with a brief review of the generalized theta-divisors defined in the Introduction.  

\subsection{Generalized Theta-Divisors}
One of the pillars of the theory of algebraic curves is the theta-divisor associated to a smooth irreducible projective curve of genus $g,$ i.e. the locus of line bundles of degree $g-1$ on $C$ which admit global sections (cf. Chapter I of \cite{ACGH}).  The following  generalization has proven to be very fruitful (cf. \cite{Bea2}, \cite{Po} and the references therein).
\begin{defn}
Let $F$ be a vector bundle on $C$ with rank $r$ and degree $r(g-1-d).$  We say that $F$ \textit{admits a theta-divisor in }$\textnormal{Pic}^{d}(C)$ if for some $L \in \textnormal{Pic}^{d}(C)$ we have that $h^{0}(F \otimes L)=0.$
\end{defn}
Since the vanishing of $h^0(F \otimes L)$ is an open condition on $\textnormal{Pic}^{d}(C),$ this condition amounts to saying that the locus
\begin{equation}
\Theta_{F}:=\{L \in \textnormal{Pic}^{d}(C): h^{0}(F \otimes L) \neq 0\}
\end{equation}
is a proper Zariski-closed subset of $\textnormal{Pic}^{d}(C).$  Moreover, the expected codimension of $\Theta_{F}$ is equal to 1, so $F$ admits a theta-divisor in $\textnormal{Pic}^{d}(C)$ precisely when $\Theta_{F}$ is a divisor.  The reason that $\Theta_{F}$ is nonempty is that it is the inverse image of the (ample and effective) theta-divisor on $U_{C}(r,r(g-1))$ under the natural morphism $t_{F}: \textnormal{Pic}^{d}(C) \rightarrow U_{C}(r,r(g-1))$ defined by $L \mapsto F \otimes L.$

While every line bundle of degree $g-1-d$ admits a theta-divisor in $\textnormal{Pic}^{d}(C),$ there are examples of higher-rank vector bundles of slope $g-1-d$ which do not; see Section 6.2 of \cite{Po} for details.

\subsection{Proof of Theorem \ref{main}}
We begin by proving the implication (i)$\Rightarrow$(ii).  The first result in this section implies that to verify (ii), we need only produce a single smooth curve $C$ in the relevant linear system for which $M_C$ admits a theta-divisor.  It will also be used in the proof of Theorem \ref{mrchyp}. 

\begin{prop}
\label{thetaopen}
Let $Y$ be a smooth projective surface and let $|V|$ be a basepoint-free linear system on $Y.$  Denote by $\mathfrak{U}$ the open subset of $|V|$ parametrizing smooth members of $|V|,$ and let $$\mathcal{V}:=\{([C],p) \in \mathfrak{U} \times Y : p \in C \}$$ be the associated incidence variety.

Let $\mathfrak{M}$ be a vector bundle of rank $s$ on $Y$ such that for some $[C_{0}] \in \mathfrak{U},$ the restriction $\mathfrak{M}|_{C_{0}}$ admits a theta-divisor in $\textnormal{Pic}^{d}(C_{0}).$  Then there exists a Zariski-open subset $\widetilde{\mathfrak{U}} \subseteq \mathfrak{U}$ containing $[C_{0}]$ such that for all $[C] \in \widetilde{\mathfrak{U}}$ the vector bundle $\mathfrak{M}|_{C}$ admits a theta-divisor in $\textnormal{Pic}^{d}(C).$
\end{prop}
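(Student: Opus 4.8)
The statement is an openness (semicontinuity) result: the condition ``$\mathfrak{M}|_C$ admits a theta-divisor in $\textnormal{Pic}^d(C)$'' is open as $[C]$ varies in $\mathfrak{U}$. The natural strategy is to assemble all the relevant data into a single family over a base and invoke the semicontinuity theorem for cohomology. Concretely, the plan is to build a family of vector bundles whose fiber over $[C] \in \mathfrak{U}$ is $\mathfrak{M}|_C$, and simultaneously a family of Picard varieties $\textnormal{Pic}^d(C)$, so that the locus where $h^0(\mathfrak{M}|_C \otimes L) \neq 0$ forms a closed subset of a total space; the hypothesis at $[C_0]$ says this closed subset does not surject onto $\textnormal{Pic}^d(C_0)$, and I want to propagate that non-surjectivity to nearby $[C]$.

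\textbf{Key steps.} First I would use the incidence variety $\mathcal{V}$ with its projection $q: \mathcal{V} \to \mathfrak{U}$; this is a smooth family of curves since $\mathfrak{U}$ parametrizes smooth members and $|V|$ is basepoint-free. Pulling back $\mathfrak{M}$ along the other projection $\mathcal{V} \to Y$ gives a vector bundle $\widetilde{\mathfrak{M}}$ on $\mathcal{V}$ restricting to $\mathfrak{M}|_C$ on each fiber $q^{-1}([C]) \cong C$. Second, I would construct the relative Picard scheme $\mathcal{P} := \textnormal{Pic}^d_{\mathcal{V}/\mathfrak{U}} \to \mathfrak{U}$, whose fiber over $[C]$ is $\textnormal{Pic}^d(C)$; after an \'etale base change or restriction to a suitable open, there is a Poincar\'e bundle $\mathcal{P}\textnormal{oin}$ on the fiber product $\mathcal{V} \times_{\mathfrak{U}} \mathcal{P}$. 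Third, I form the sheaf $\mathcal{F} := \widetilde{\mathfrak{M}}^* \otimes \mathcal{P}\textnormal{oin}$ (suitably interpreted via the two projections) on $\mathcal{V} \times_{\mathfrak{U}} \mathcal{P}$ and push it forward along $\rho: \mathcal{V} \times_{\mathfrak{U}} \mathcal{P} \to \mathcal{P}$. The fiber of this relative direct image over a point $([C], L) \in \mathcal{P}$ computes $H^0(\mathfrak{M}|_C \otimes L)$. By the semicontinuity theorem, the locus
\begin{equation}
Z := \{ ([C],L) \in \mathcal{P} : h^0(\mathfrak{M}|_C \otimes L) \neq 0\}
\end{equation}
is Zariski-closed in $\mathcal{P}$. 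Fourth, I consider the composition $\pi: Z \hookrightarrow \mathcal{P} \to \mathfrak{U}$. The hypothesis that $\mathfrak{M}|_{C_0}$ admits a theta-divisor means precisely that $Z \cap \pi^{-1}([C_0]) = \Theta_{\mathfrak{M}|_{C_0}}$ is a proper subset of the fiber $\textnormal{Pic}^d(C_0)$, i.e. $[C_0] \notin \pi(Z)$ viewed through the ``entire fiber is bad'' lens. More precisely, I set $W := \{ [C] \in \mathfrak{U} : \pi^{-1}([C]) \subseteq Z \}$, the locus where the \emph{whole} fiber lies in $Z$; since $\mathcal{P} \to \mathfrak{U}$ is proper and $Z$ is closed, $W$ is closed in $\mathfrak{U}$, and the hypothesis says $[C_0] \notin W$. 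Taking $\widetilde{\mathfrak{U}} := \mathfrak{U} \setminus W$ gives the desired open neighborhood.

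\textbf{Main obstacle.} The chief technical hurdle is the existence of the Poincar\'e bundle on $\mathcal{V} \times_{\mathfrak{U}} \mathcal{P}$; a universal line bundle need not exist globally on the relative Picard scheme without a section of $q$, and it is only well-defined up to twisting by a line bundle pulled back from $\mathcal{P}$. This is surmountable because $q$ does admit sections (the base $|V|$ carries plenty of curves through a fixed point, or one works \'etale-locally on $\mathfrak{U}$), and because semicontinuity of $h^0$ is insensitive to the choice of Poincar\'e bundle---twisting $\mathfrak{M}|_C \otimes L$ by a line bundle constant along each Picard fiber merely reparametrizes, leaving $Z$ and hence $W$ intact. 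A secondary point requiring care is verifying that $W$, defined via the ``whole fiber contained in $Z$'' condition, is genuinely closed; this follows from properness of $\mathcal{P} \to \mathfrak{U}$ applied to the closed set $Z$, since the image $\pi(\mathcal{P} \setminus Z)$ of the open complement is the set of $[C]$ whose fiber meets the good locus, and its complement $W$ is therefore closed. Once these families are set up correctly, the result is a formal consequence of properness and semicontinuity.
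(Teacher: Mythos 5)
Your proposal is correct in substance but takes a more streamlined route than the paper, so a comparison is worthwhile. Both arguments share the same skeleton: pass to a finite (or \'etale) base change of $\mathfrak{U}$ so that the family of curves acquires a section and hence a Poincar\'e bundle on $\mathcal{V}\times_{\mathfrak{U}}\textnormal{Pic}^{d}_{\mathfrak{U}}(\mathcal{V})$, then detect the locus where $h^{0}(\mathfrak{M}|_{C}\otimes L)\neq 0$ inside the relative Picard scheme. Where you diverge is in how openness is extracted. The paper first twists $\mathfrak{M}$ by a sufficiently ample divisor $D$ to kill all $H^{1}$'s, which makes $h^{0}(\mathfrak{M}(D)|_{C}\otimes L)$ constant; the theta-locus is then realized as the degeneracy divisor of a map $\rho\colon\mathcal{F}\to\mathcal{G}$ between two vector bundles of equal rank on the relative Picard scheme, so it is locally cut out by one equation, hence of pure codimension $1$, and the conclusion follows from upper semicontinuity of the fiber dimension of this divisor over $\mathfrak{U}'$. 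You instead apply upper semicontinuity of $h^{0}$ directly to get a closed locus $Z$ and then push the open complement down to $\mathfrak{U}$. Your route is more elementary and avoids the auxiliary divisor $D$ entirely; the paper's determinantal detour buys the extra information that $\Theta_{\mathfrak{M}|_{C}}$ is genuinely a divisor of pure codimension $1$ in each nearby fiber, not merely a proper closed subset, though only the latter is needed for the stated conclusion. One point in your writeup needs repair: you justify the closedness of $W$ (equivalently, the openness of $\pi(\mathcal{P}\setminus Z)$) by the properness of $\mathcal{P}\to\mathfrak{U}$, but properness only gives that images of \emph{closed} sets are closed, which is useless here since $\pi(Z)$ is typically all of $\mathfrak{U}$. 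What you actually need is that $\pi$ is an \emph{open} map, which holds because the relative Picard scheme of a smooth projective family of curves is smooth (a torsor under the relative Jacobian), hence flat and open over the base. With that substitution your argument closes up; indeed you do not even need $W$ at all, since $\pi(\mathcal{P}\setminus Z)$ is already the desired open neighborhood of $[C_{0}]$. (The dual in your definition $\mathcal{F}:=\widetilde{\mathfrak{M}}^{*}\otimes\mathcal{P}\textnormal{oin}$ appears to be a typo, and your parenthetical suggestion that $q$ admits a global section by fixing a point of $Y$ does not work --- the curves through a fixed point form only a hyperplane in $|V|$ --- but your fallback of working \'etale-locally, together with the observation that the condition is insensitive to twisting the Poincar\'e bundle by pullbacks from the Picard scheme, is the correct fix and matches the paper's finite base change.)
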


\begin{proof}
Let $g$ be the arithmetic genus of an element of $|V|.$  Assume without loss of generality that $d \geq g,$ so that for each $[C] \in \mathfrak{U}$ every line bundle of degree $d$ on $C$ has a section.  Fix a divisor $D$ on $Y$ which is sufficiently ample to guarantee the vanishings $H^{1}(\mathfrak{M}(D))=0$ and $H^{2}(\mathfrak{M}(D-C))=0$ for all $[C] \in \mathfrak{U}.$  (Note that we have listed only two vanishings, since the elements of $\mathfrak{U}$ are linearly equivalent to one another.)  This implies that
\begin{equation}
\label{allvanish}
H^{1}(\mathfrak{M}(D)|_{C})=0\textnormal{ for all }[C] \in \mathfrak{U}.
\end{equation}
Fix $C \in \mathfrak{U}$ and $\mathcal{L} \in \textnormal{Pic}^{d}(C).$  Then $H^{0}(\mathcal{L}) \neq 0$ by our assumption on $d,$ so for some nonzero $\mathfrak{s} \in H^{0}(\mathcal{L})$ we have an exact sequence
\begin{equation}
\label{minisequence}
0 \rightarrow \mathcal{O}_{C} \overset{\mathfrak{s}}{\rightarrow} \mathcal{L} \rightarrow \mathcal{O}_{\mathfrak{s}^{-1}(0)} \rightarrow 0
\end{equation}
Twisting (\ref{minisequence}) by $\mathfrak{M}(D)|_{C}$ and taking cohomology, we now have from (\ref{allvanish}) that $H^{1}(\mathfrak{M}(D)|_{C} \otimes \mathcal{L})=0.$  Furthermore, our hypothesis that the restriction of $\mathfrak{M}$ to the genus-$g$ curve $C_{0}$ in $|V|$ admits a theta-divisor in $\textnormal{Pic}^{d}(C_{0})$ implies that $c_{1}(\mathfrak{M}|_{C} \otimes \mathcal{L})=s(g-1)$ for some positive integer $s.$  We may then conclude from Riemann-Roch that
\begin{equation}
\label{constrank}
H^{0}(\mathfrak{M}(D)|_{C} \otimes \mathcal{L})=s(C \cdot D)=s(C_{0} \cdot D) \hskip5pt {\forall}[C] \in \mathfrak{U},{\forall}\mathcal{L} \in \textnormal{Pic}^{d}(C).
\end{equation}
To construct our open set $\widetilde{\mathfrak{U}} \subseteq \mathfrak{U},$ we consider the Cartesian diagram
\begin{equation}
\label{cartdiag}
\begin{small}
\xymatrix{
{\mathcal{V}' \times_{\mathfrak{U}'} \textnormal{\textbf{Pic}}^{d}_{\mathfrak{U}'}(\mathcal{V}')} \ar[r]^-{h'} \ar[d]^-{{\pi}''} &{\mathcal{V'}:=\mathfrak{U}' \times_{\mathfrak{U}} \mathcal{V}} \ar[r]^-{h} \ar[d]^-{{\pi}'} &\mathcal{V} \ar[r]^-{\eta} \ar[d]^-{\pi} &Y\\
{\textnormal{\textbf{Pic}}^{d}_{\mathfrak{U}'}(\mathcal{V}')} \ar[r]^-{g'} &\mathfrak{U}' \ar[r]^-{g} &\mathfrak{U}
}
\end{small}
\end{equation}
where $g: \mathfrak{U}' \rightarrow \mathfrak{U}$ is a finite base change for which ${\pi}': \mathcal{V}' \rightarrow \mathfrak{U}'$ admits a section ${\sigma}':\mathfrak{U}' \rightarrow \mathcal{V}'$ and $\textnormal{\textbf{Pic}}^{d}_{\mathfrak{U}'}(\mathcal{V}')$ is the relative Picard variety of degree $d.$  The existence of ${\sigma}'$ implies the existence of a Poincar\'{e} line bundle on $\mathcal{V}' \times_{\mathfrak{U}'} \textnormal{\textbf{Pic}}^{d}_{\mathfrak{U}'}(\mathcal{V}')$, i.e a line bundle $\mathcal{P}'$ such that for all $u' \in \mathfrak{U}'$ and all line bundles $\mathcal{L}'$ of degree $d$ on the curve $C':=(\pi')^{-1}(u'),$ the restriction of $\mathcal{P}'$ to $C' \times [\mathcal{L}']$ is isomorphic to $\mathcal{L'}.$

We define two coherent sheaves on $\textnormal{\textbf{Pic}}^{d}_{\mathfrak{U}'}(\mathcal{V}')$ as follows:
$$\mathcal{F}:=({\pi}'')_{\ast}((\eta \circ h \circ h')^{\ast}(\mathfrak{M}(D)) \otimes \mathcal{P}')$$ $$\mathcal{G}:=({\pi}'')_{\ast}((\eta \circ h \circ h')^{\ast}(\mathfrak{M}(D)) \otimes \mathcal{P}' \otimes \mathcal{O}_{(\eta \circ h \circ h')^{-1}(D)})$$

The fiber of $\mathcal{F}$ (resp. $\mathcal{G}$) over $(C',\mathcal{L}') \in \textnormal{\textbf{Pic}}^{d}_{\mathfrak{U}'}(\mathcal{V}')$ is $H^{0}(\mathfrak{M}(D)|_{C'} \otimes \mathcal{L}')$ (resp. $H^{0}(\mathfrak{M}(D)|_{C' \cap D} \otimes \mathcal{L}|_{C' \cap D})$), where $C' \cap D$ denotes the scheme-theoretic intersection of $C'$ and $D.$  We then have from (\ref{constrank}) and Grauert's Theorem that $\mathcal{F}$ and $\mathcal{G}$ are both vector bundles of rank $s(C_{0} \cdot D)$ on $\textbf{Pic}_{\mathfrak{U}'}^{d}(\mathcal{V}').$  Moreover, there is a natural morphism of vector bundles $\rho:\mathcal{F} \rightarrow \mathcal{G}$ whose fiber at each $(C',\mathcal{L}')$ is the restriction map  $H^{0}(\mathfrak{M}(D)|_{C'} \otimes \mathcal{L}') \rightarrow H^{0}(\mathfrak{M}(D)|_{C' \cap D} \otimes \mathcal{L}|_{C' \cap D})$ with kernel $H^{0}(\mathfrak{M}|_{C'} \otimes \mathcal{L}').$  The zero locus of the determinant of $\rho$ will be denoted by $\mathcal{D}_{\rho},$ and it is supported on the set
\begin{equation}
\textnormal{supp}(\mathcal{D}_{\rho}):=\{(C',\mathcal{L}') \in \textbf{Pic}^{d}_{\mathfrak{U}'}(\mathcal{V}') : H^{0}(\mathfrak{M}|_{C'} \otimes \mathcal{L}') \neq 0\}.
\end{equation}
Our next task is to show that $\mathcal{D}_{\rho}$ is of pure codimension 1 in $\textbf{Pic}^{d}_{\mathfrak{U}'}(\mathcal{V}').$  Since $\mathcal{D}_{\rho}$ is locally the zero locus of a single function, it suffices to check that $\textnormal{supp}(\mathcal{D}_{\rho})$ is a nonempty proper subset of $\textbf{Pic}^{d}_{\mathfrak{U}'}(\mathcal{V}').$

Choose a point $u'_{0} \in \mathfrak{U}'$ for which $h(({\pi}')^{-1}(u'_{0}))=\pi^{-1}(g(u'_{0}))=C_{0}.$  By hypothesis, the intersection of $\mathfrak{D}_{\rho}$ with $(g')^{-1}(u'_{0}) \cong \textnormal{Pic}^{d}(C_{0})$ is an effective divisor in $\textnormal{Pic}^{d}(C_{0});$ therefore $\textnormal{supp}(\mathfrak{D}_{\rho})$ is a nonempty proper subset of $\textbf{Pic}^{d}_{\mathfrak{U}'}(\mathcal{V}')$ as claimed.

Since the fiber of the restriction $g'|_{\mathcal{D}_{\rho}}: \mathcal{D}_{\rho} \rightarrow \mathfrak{U}'$ over $u'_{0}$ is of dimension $g-1,$ and we have just seen that $\dim{\mathcal{D}}_{\rho}=g-1+\dim{\mathfrak{U}'},$ semicontinuity implies that there exists a nonempty Zariski-open subset $\widetilde{\mathfrak{U}'} \subseteq \mathfrak{U}'$ containing $u'_{0}$ over which $g'|_{\mathcal{D}_{\rho}}$ has relative dimension $g-1.$

Finally, we define $\widetilde{\mathfrak{U}}$ to be the Zariski interior of the image set $g(\widetilde{\mathfrak{U}'}).$  It is clear from the properties of $\widetilde{\mathfrak{U}'}$ that $\widetilde{\mathfrak{U}}$ contains $[C_{0}]$ and that the restriction of $\mathfrak{M}$ to $C$ admits a theta-divisor in $\textnormal{Pic}^{d}(C)$for all $[C] \in \widetilde{\mathfrak{U}}.$
\end{proof}

\begin{thm}
\label{mainprop1}
Let $\mathcal{E}$ be an Ulrich bundle of rank $r$ on $X_d.$  Then for a general smooth member $C$ of $|\det{\mathcal{E}}|,$ we have that $M_{C}$ admits a theta-divisor in $\textnormal{Pic}^{c_{2}(\mathcal{E})}(C).$
\end{thm}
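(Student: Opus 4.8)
The plan is to first reduce to producing a single good curve by invoking Proposition \ref{thetaopen}, and then to build that curve together with the requisite line bundle by an elementary modification of $\mathcal{E}$, checking the theta-vanishing against the normalization of the Ulrich bundle $\mathcal{E}^{\vee}(2)$. The starting observation is that the kernel bundle is a restriction from the surface: if $M_{X}:=\ker(H^{0}(\mathcal{O}_{X_d}(1))\otimes \mathcal{O}_{X_d} \rightarrow \mathcal{O}_{X_d}(1))$ denotes the kernel bundle on $X_d$ itself, then for every smooth $C\in|\det{\mathcal{E}}|$ one has $M_{C}=M_{X}|_{C}$, since $\mathcal{O}_{X_d}(1)$ is locally free and restricting the surjective evaluation map keeps it surjective. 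As $|\det{\mathcal{E}}|$ is basepoint-free (Lemma \ref{numprop}(i)), Proposition \ref{thetaopen} applies with $Y=X_d$, $\mathfrak{M}=M_{X}$ and Picard degree $c_{2}(\mathcal{E})$: it suffices to exhibit one smooth member $C_{0}\in|\det{\mathcal{E}}|$ for which $M_{C_{0}}$ admits a theta-divisor in $\textnormal{Pic}^{c_{2}(\mathcal{E})}(C_{0})$, after which the conclusion holds for all members in a Zariski-open set, hence for the general smooth (and, by Lemma \ref{numprop}, irreducible) member. The numerics are consistent: adjunction gives $2g-2=c_{1}(\mathcal{E})^{2}-dr$, so $c_{2}(\mathcal{E})=g-1+r$ by Proposition \ref{chern}(iii), and $M_{C}\otimes \mathcal{L}$ has Euler characteristic $0$ for $\mathcal{L}$ of degree $c_{2}(\mathcal{E})$.

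Next I would construct $C_{0}$ and the line bundle by an elementary modification. Since $\mathcal{E}$ is globally generated with $h^{0}(\mathcal{E})=dr$ (Corollary \ref{cliffbasics}), a general $r$-dimensional subspace $W\subseteq H^{0}(\mathcal{E})$ gives an evaluation map $W\otimes\mathcal{O}_{X_d}\rightarrow \mathcal{E}$ of bundles of equal rank $r$, injective with torsion cokernel supported on the zero locus $C_{0}$ of its determinant, a section of $\det{\mathcal{E}}$. Using the global generation of $W^{\vee}\otimes\mathcal{E}$ and Bertini, for general $W$ the locus where the corank is at least $2$ is empty, $C_{0}$ is a smooth member of $|\det{\mathcal{E}}|$, and the cokernel is $j_{\ast}\mathcal{Q}$ for a line bundle $\mathcal{Q}$ on $C_{0}$, where $j:C_{0}\hookrightarrow X_d$. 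Dualizing $0\rightarrow W\otimes\mathcal{O}_{X_d}\rightarrow \mathcal{E}\rightarrow j_{\ast}\mathcal{Q}\rightarrow 0$ and using $\mathcal{E}xt^{1}(j_{\ast}\mathcal{Q},\mathcal{O}_{X_d})\cong j_{\ast}(\mathcal{Q}^{\vee}\otimes \mathcal{O}_{C_{0}}(C_{0}))$ produces the elementary modification
\begin{equation}
\label{emod}
0 \rightarrow \mathcal{E}^{\vee} \rightarrow W^{\vee}\otimes \mathcal{O}_{X_d} \rightarrow j_{\ast}\mathcal{L}_{0} \rightarrow 0, \qquad \mathcal{L}_{0}:=\mathcal{Q}^{\vee}\otimes \mathcal{O}_{C_{0}}(C_{0}).
\end{equation}
The Chern class computation recalled in the discussion preceding Proposition \ref{onlydelpezzo} gives $\deg \mathcal{L}_{0}=c_{2}(\mathcal{E})=g-1+r$. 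Moreover $H^{0}(\mathcal{E}^{\vee})=0$ because $\mathcal{E}^{\vee}$ is semistable of negative slope, and $H^{1}(\mathcal{E}^{\vee})=0$ because $\mathcal{E}^{\vee}(2)$ is Ulrich (Proposition \ref{agdual}) hence ACM; so the long exact sequence of (\ref{emod}) identifies $H^{0}(\mathcal{L}_{0})$ with $W^{\vee}$.

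The heart of the matter is then the vanishing $H^{0}(M_{C_{0}}\otimes \mathcal{L}_{0})=0$, which I would obtain by comparing two multiplication maps. Tensoring the defining sequence of $M_{C_{0}}$ by $\mathcal{L}_{0}$ shows that $H^{0}(M_{C_{0}}\otimes \mathcal{L}_{0})$ is the kernel of the multiplication map $H^{0}(\mathcal{O}_{X_d}(1))\otimes H^{0}(\mathcal{L}_{0})\rightarrow H^{0}(\mathcal{O}_{C_{0}}(1)\otimes \mathcal{L}_{0})$. On the other hand, twisting (\ref{emod}) by $\mathcal{O}_{X_d}(1)$ and taking cohomology exhibits, after the identification $H^{0}(\mathcal{L}_{0})\cong W^{\vee}$, this very same map as $W^{\vee}\otimes H^{0}(\mathcal{O}_{X_d}(1))\rightarrow H^{0}(\mathcal{L}_{0}(1))$, whose kernel is $H^{0}(\mathcal{E}^{\vee}(1))$. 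Hence $H^{0}(M_{C_{0}}\otimes \mathcal{L}_{0})\cong H^{0}(\mathcal{E}^{\vee}(1))$, and the latter vanishes because $\mathcal{E}^{\vee}(2)$ is Ulrich, hence normalized (Proposition \ref{agdual} and Corollary \ref{cliffbasics}(iii)), so that $H^{0}(\mathcal{E}^{\vee}(2)(-1))=0$. Since $\chi(M_{C_{0}}\otimes \mathcal{L}_{0})=0$, this forces $H^{1}(M_{C_{0}}\otimes \mathcal{L}_{0})=0$ as well, so $\mathcal{L}_{0}$ witnesses a theta-divisor for $M_{C_{0}}$, and the first step finishes the proof.

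I expect the main obstacle to lie in the elementary-modification step: verifying that a general $W$ yields a genuinely smooth $C_{0}$ with line-bundle cokernel (a generic-determinantal/Bertini argument resting on the global generation of $\mathcal{E}$), and then matching the kernel-bundle multiplication map with the one coming from (\ref{emod}) precisely enough to read off its kernel as $H^{0}(\mathcal{E}^{\vee}(1))$. Once these identifications are secured, the degree bookkeeping and the terminal vanishing follow immediately from the properties of Ulrich bundles already established.
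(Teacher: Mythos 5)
Your proposal is correct and follows essentially the same route as the paper: reduce to a single curve via Proposition \ref{thetaopen}, produce the curve and the line bundle by the elementary modification $0 \to \mathcal{E}^{\vee} \to \mathcal{O}_{X_d}^{r} \to j_{\ast}\mathcal{L} \to 0$ coming from $r$ general sections of $\mathcal{E}$, and deduce $H^{0}(M_{C_0}\otimes\mathcal{L}_0)=0$ by identifying the kernel of the multiplication map with $H^{0}(\mathcal{E}^{\vee}(1))$, which vanishes by the Ulrich property. The only cosmetic differences are that you bypass the paper's $3\times 3$ diagram by comparing the two short exact sequences directly, and you get $H^{0}(\mathcal{E}^{\vee}(1))=0$ from the normalization of $\mathcal{E}^{\vee}(2)$ rather than from Serre duality and $H^{2}(\mathcal{E}(-2))=0$.
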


\begin{proof}
According to Proposition \ref{thetaopen}, it suffices to exhibit just one smooth member of $|\det{\mathcal{E}}|$ for which the conclusion holds.  Since $\mathcal{E}$ is Ulrich, it is globally generated with $dr$ global sections, so a general choice of $r$ global sections yields an injective morphism $\sigma: \mathcal{E}^{\vee} \rightarrow \mathcal{O}_X^{r}$ whose cokernel is a line bundle $\mathcal{L}$ on a smooth curve $C \subseteq X$ of degree $dr$ and genus $c_{2}(\mathcal{E})-r+1=\frac{C^{2}-dr}{2}+1.$  We therefore have an exact sequence
\begin{equation}
\label{propseq1}
0 \longrightarrow \mathcal{E}^{\vee} \overset{\sigma}{\longrightarrow} \mathcal{O}_X^{r} \longrightarrow j_{\ast}\mathcal{L} \longrightarrow 0
\end{equation}
where $j:C \hookrightarrow X$ is inclusion.  Since $\mathcal{L}$ has the desired degree $c_{2}(\mathcal{E})=\frac{C^{2}-(d-2)r}{2},$ it remains to check that $H^{0}(M_{C} \otimes \mathcal{L})=0.$

If we define $M_{X}:=\ker(H^{0}(\mathcal{O}_{X}(1)) \otimes \mathcal{O}_{X} \rightarrow \mathcal{O}_{X}(1)),$ then $M_{X} \otimes \mathcal{O}_{C} \cong M_{C},$ and tensoring (\ref{propseq1}) with the sequence
\begin{equation}
\label{propseq3}
0 \rightarrow M_{X} \rightarrow H^{0}(\mathcal{O}_{X}(1)) \otimes \mathcal{O}_{X} \rightarrow \mathcal{O}_{X}(1) \rightarrow 0
\end{equation}
yields the following commutative diagram with exact rows and columns:

\medskip
\begin{center}
\begin{tiny}
\label{propbig}
$\xymatrix{
            &0                                                                                  &0                                                               &0                                                         & \\
0 \ar[r] &{j_{\ast}(M_{C} \otimes \mathcal{L})} \ar[r] \ar[u] &{H^{0}(\mathcal{O}_{X}(1)) \otimes j_{\ast}\mathcal{L}} \ar[r] \ar[u] &{j_{\ast}\mathcal{L}(1)} \ar[r] \ar[u] &0 \\
0 \ar[r] &{M_{X}^{r}} \ar[r] \ar[u] &{H^{0}(\mathcal{O}_{X}(1)) \otimes \mathcal{O}_{X}^{r}} \ar[r] \ar[u] &{\mathcal{O}_{X}(1)^{r}} \ar[r] \ar[u] &0 \\
0 \ar[r] &{M_{X} \otimes \mathcal{E}^{\vee}} \ar[r] \ar[u] &{H^{0}(\mathcal{O}_{X}(1)) \otimes \mathcal{E}^{\vee}} \ar[r] \ar[u] &\mathcal{E}^{\vee}(1) \ar[r] \ar[u] &0 \\
          &0 \ar[u] &0 \ar[u] &0 \ar[u] & \\
}$
\end{tiny}
\end{center}
\medskip
It follows from taking cohomology in (\ref{propseq3}) that $h^{0}(M_{X})=h^{1}(M_{X})=0.$  The fact that Ulrich bundles are ACM and normalized yields the vanishings $h^i(\mathcal{E}^{\vee})=h^{2-i}(\mathcal{E}(-1))=0$ for $i=0,1$ and $h^1(\mathcal{E}^{\vee})=0.$  Furthermore, $H^0(\mathcal{E}^{\vee}(1)) \cong H^2(\mathcal{E}(-2))^{\ast}$ by Serre duality, and the Ulrich condition implies the vanishing of $H^2(\mathcal{E}(-2)),$ so we also have $h^0(\mathcal{E}(-1))=0.$  Applying cohomology to the previous diagram then yields the following commutative diagram with exact rows and columns:

\medskip
\begin{center}
\begin{tiny}
$\xymatrix{
            &                                                                      &0                                                                                                                &0                                                       &  \\
0 \ar[r] &H^{0}(M_{C} \otimes \mathcal{L}) \ar[r] &H^{0}(\mathcal{O}_{X}(1)) \otimes H^{0}(\mathcal{L}) \ar[r] \ar[u] &H^{0}(\mathcal{L}(1))  \ar[u] & \\
            &0 \ar[r]                                                          &H^{0}(\mathcal{O}_{X}(1)) \otimes H^{0}(\mathcal{O}_{X})^{r} \ar[r] \ar[u] &H^{0}(\mathcal{O}_{X}(1))^{r} \ar[r] \ar[u] &0 \\
            &                                                                       &0 \ar[u]                                                                                                     &0 \ar[u]                                             & \\
}$
\end{tiny}
\end{center}
\medskip
It is immediate from this diagram that the map $H^{0}(\mathcal{O}_{X}(1)) \otimes H^{0}(\mathcal{L}) \rightarrow H^{0}(\mathcal{L}(1))$ is an isomorphism, which implies in turn that $H^{0}(M_{C} \otimes \mathcal{L})=0.$
\end{proof}

We now start towards establishing the implication $(ii) \Rightarrow (i).$  First, we need a lemma.

\begin{lem}
\label{bpfln}
Let $C \subseteq \mathbb{P}^{n}$ be a smooth irreducible projective curve of genus $g$ and degree $dr$ for some $d,r \geq 2.$  Then the general line bundle $\mathcal{L}$ of degree $g-1+r$ on $C$ is nonspecial, basepoint-free, and satisfies $h^{0}(\mathcal{L}(-1))=0.$
\end{lem}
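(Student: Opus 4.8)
The plan is to prove the three assertions about a general line bundle $\mathcal{L}$ of degree $g-1+r$ on $C$ by a dimension count, using the fact that each condition fails only on a locus of codimension at least one (in fact strictly positive) in $\textnormal{Pic}^{g-1+r}(C)$. The key structural observation is that $\textnormal{Pic}^{g-1+r}(C)$ is irreducible of dimension $g$, so it suffices to show that each of the three bad loci is a proper closed subset; a finite union of proper closed subsets is proper, and its complement is the desired general locus.

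First I would handle nonspeciality. A line bundle $\mathcal{L}$ of degree $g-1+r$ is special precisely when $h^1(\mathcal{L}) = h^0(K_C \otimes \mathcal{L}^{\vee}) \neq 0$. Since $\deg(K_C \otimes \mathcal{L}^{\vee}) = (2g-2) - (g-1+r) = g-1-r < g-1$ (using $r \geq 2$), the Brill--Noether locus of effective line bundles of this degree is a proper closed subvariety of the relevant Picard variety; equivalently, the general $\mathcal{L}$ of degree $g-1+r$ is nonspecial. Concretely, the locus $\{ \mathcal{L} : h^1(\mathcal{L}) > 0\}$ is the translate of the theta-divisor type locus $W^0_{g-1-r}$ pulled back along $\mathcal{L} \mapsto K_C \otimes \mathcal{L}^{\vee}$, which is a proper subset since a general effective divisor class of degree below $g-1$ is not attained.

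Next I would treat basepoint-freeness and the vanishing $h^0(\mathcal{L}(-1))=0$ together, since both are failures of a cohomology group to vanish and each cuts out a proper closed subset. For basepoint-freeness, $\mathcal{L}$ fails to be basepoint-free exactly when $h^0(\mathcal{L}(-p)) = h^0(\mathcal{L})$ for some $p \in C$; the locus of such $\mathcal{L}$ is the image of an incidence correspondence fibered over $C$ whose fibers have dimension $g-1$ (the line bundles $\mathcal{L}(-p)$ range over a translate of $W_{g-2+r}$ type data), so this image has dimension at most $g-1 < g$ and is proper. For the last condition, $\mathcal{L}(-1)$ has degree $g-1+r-dr = g-1-(d-1)r < g-1$ (using $d \geq 2$, $r \geq 2$), so the same effective-locus argument as in the nonspeciality step shows $\{\mathcal{L} : h^0(\mathcal{L}(-1)) \neq 0\}$ is a proper closed subset of $\textnormal{Pic}^{g-1+r}(C)$.

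The main obstacle, such as it is, lies in the basepoint-freeness count: unlike the other two conditions, which reduce directly to the statement that a general effective-class locus of degree strictly below $g-1$ is a proper subset of a Picard variety, basepoint-freeness requires a fibered incidence argument over $C$ and a careful check that the total dimension of the bad locus stays strictly below $g$. I expect this to go through cleanly because the general line bundle of degree $g-1+r \geq g$ (note $r \geq 2$ forces the degree to be at least $g+1$) is already known to be globally generated by standard Brill--Noether theory, but the cleanest writeup is to invoke the dimension bound on the incidence variety and conclude properness. Once all three bad loci are shown to be proper closed subsets, their union is proper, and the general $\mathcal{L}$ avoids all three, completing the proof.
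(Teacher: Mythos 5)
Your treatment of nonspeciality and of the vanishing $h^{0}(\mathcal{L}(-1))=0$ is correct and matches the paper: both reduce to the fact that the effective locus $W_{e}(C)$ in a Picard variety of degree $e<g$ (here $e=g-1-r$ after Serre duality, and $e=g-1-(d-1)r$) is a proper closed subset, which is exactly the paper's use of geometric Riemann--Roch and Abel's theorem. The problem is in the basepoint-freeness step, which you yourself identify as the main obstacle: the dimension count you sketch there does not cohere. You describe the bad locus as the image of an incidence correspondence over $C$ whose fibers "have dimension $g-1$" and conclude the image has dimension at most $g-1$; but a correspondence fibered over the curve $C$ with $(g-1)$-dimensional fibers has total dimension $g$, so this bound gives nothing. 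Worse, the parenthetical identification of the fiber with "a translate of $W_{g-2+r}$ type data" is wrong: since $g-2+r\geq g$ for $r\geq 2$, the locus $W_{g-2+r}(C)$ of \emph{effective} classes is all of $\mathrm{Pic}^{g-2+r}(C)$, of dimension $g$, so mere effectivity of $\mathcal{L}(-p)$ is not the relevant condition.

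The correct condition is that a base point of $\mathcal{L}$ at $p$ forces $h^{0}(\mathcal{L}(-p))=h^{0}(\mathcal{L})\geq r$, i.e.\ $\mathcal{L}(-p)$ lies in the Brill--Noether locus $W^{r-1}_{g-2+r}(C)$, and the missing input is a bound on the dimension of that locus. This is precisely what the paper supplies: by Martens' theorem, $\dim W^{r-1}_{g-2+r}(C)\leq g-r$, so the image of the map $\phi\colon C\times W^{r-1}_{g-2+r}(C)\to \mathrm{Pic}^{g-1+r}(C)$, $(p,\mathcal{L}')\mapsto\mathcal{L}'(p)$, which contains every $\mathcal{L}$ with a base point, has dimension at most $g-r+1\leq g-1<g$. (An alternative repair in the spirit of your first step: for nonspecial $\mathcal{L}$, a base point at $p$ is equivalent to $h^{1}(\mathcal{L}(-p))>0$, i.e.\ to $K_{C}\otimes\mathcal{L}^{\vee}(p)$ being effective, and the resulting locus has dimension at most $1+(g-r)=g-r+1$.) The standard fact you invoke --- that a general line bundle of degree at least $g+1$ is globally generated --- is true and would suffice if cited, but the argument you actually wrote down for it is not a proof.
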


\begin{proof}
By geometric Riemann-Roch, the nonspecial line bundles of degree $g-1+r$ form a nonempty Zariski-open subset of $\textnormal{Pic}^{g-1+r}(C).$  We now show that the same is true for basepoint-free line bundles of degree $g-1+r.$  Consider the Brill-Noether locus
\begin{equation}
\label{bnloc}
W^{r-1}_{g-2+r}(C)=\{\mathcal{L'} \in \textnormal{Pic}^{g-2+r}(C): h^{0}(\mathcal{L}') \geq r\}
\end{equation}
and the natural map
\begin{equation}
\phi: C \times W^{r-1}_{g-2+r}(C) \rightarrow \textnormal{Pic}^{g-1+r}(C), \hskip10pt (p,\mathcal{L}') \mapsto \mathcal{L}'(p)
\end{equation}
It is clear that the image of $\phi$ contains the set of all line bundles of degree $g-1+r$ which possess a base point.  By Martens' Theorem (p.191-192 of \cite{ACGH}) the dimension of $W^{r-1}_{g-2+r}(C)$ is at most $g-r,$ so that the image of $\phi$ has dimension at most $g-r+1.$  Since $r \geq 2,$ the complement of the image of $\phi$ is nonempty and Zariski-open.

To see that the vanishing of $h^{0}(\mathcal{L}(-1))$ is a nonempty and Zariski-open condition on $\mathcal{L}\in \textnormal{Pic}^{g-1+r}(C),$ recall that the locus in $\textnormal{Pic}^{g-1-(d-1)r}(C)$ parametrizing line bundles with a nonzero section has dimension equal to $g-1-(d-1)r$ by Abel's Theorem.
\end{proof}

The next result concludes the proof of Theorem \ref{main}.

\begin{thm}
\label{mainprop2}
Let $C \subseteq X_d$ be a smooth irreducible curve of degree $dr \geq d$ and genus $g,$ and assume that $M_{C}$ admits a theta-divisor in $\textnormal{Pic}^{g-1+r}(C).$  Then there exists an Ulrich bundle $\mathcal{E}$ of rank $r$ on $X$ with $c_{1}(\mathcal{E})=C$ and $c_{2}(\mathcal{E})=g-1+r.$
\end{thm}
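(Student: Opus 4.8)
The plan is to realize $\mathcal{E}$ as an elementary modification of a trivial bundle along $C$, driven by a line bundle that lies off the theta-divisor $\Theta_{M_{C}}$, and then to read off the Ulrich property from the cohomological characterization in Proposition \ref{cliffdef}(v). The guiding principle is that the construction in the proof of Theorem \ref{mainprop1} should be reversible: there the Ulrich hypothesis forced the vanishing of $H^{0}(M_{C}\otimes\mathcal{L})$, and here I would run that implication backwards, feeding in the theta-divisor hypothesis to recover the Ulrich property.

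First I would choose the line bundle $\mathcal{L}$ on $C$ driving the construction. By hypothesis $\Theta_{M_{C}}$ is a proper Zariski-closed subset of $\textnormal{Pic}^{g-1+r}(C)$, so its complement is a nonempty open set; intersecting it with the nonempty open loci supplied by Lemma \ref{bpfln}, I may select $\mathcal{L}\in\textnormal{Pic}^{g-1+r}(C)$ which is simultaneously nonspecial, globally generated, and satisfies $H^{0}(M_{C}\otimes\mathcal{L})=0$. Since $\chi(M_{C}\otimes\mathcal{L})=0$, this last vanishing automatically forces $H^{1}(M_{C}\otimes\mathcal{L})=0$ as well. Nonspeciality gives $h^{0}(\mathcal{L})=r$, so global generation produces a surjection $H^{0}(\mathcal{L})\otimes\mathcal{O}_{X}\to j_{\ast}\mathcal{L}$ (with $j:C\hookrightarrow X$ the inclusion) whose kernel $\mathcal{E}^{\vee}$ is locally free of rank $r$, the point being that $j_{\ast}\mathcal{L}$ has projective dimension one along the smooth divisor $C$. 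Setting $\mathcal{E}:=(\mathcal{E}^{\vee})^{\vee}$, the standard Chern class computation for this elementary modification gives $c_{1}(\mathcal{E})=[C]$ and $c_{2}(\mathcal{E})=\deg\mathcal{L}=g-1+r$, which, via the adjunction identity $2g-2=C^{2}-C\cdot H$ on $X_{d}$, are exactly the invariants claimed.

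It then remains to check that $\mathcal{E}$ is Ulrich, which I would do by verifying the four vanishings of Proposition \ref{cliffdef}(v). Using Serre duality on $X_{d}$ (where $K_{X_{d}}=-H$), these translate into the conditions $H^{1}(\mathcal{E}^{\vee})=H^{2}(\mathcal{E}^{\vee})=0$ and $H^{0}(\mathcal{E}^{\vee}(1))=H^{1}(\mathcal{E}^{\vee}(1))=0$. I would extract all four from the long exact cohomology sequences of the defining sequence twisted by $\mathcal{O}_{X}(t)$ for $t=0,1$. These sequences express $H^{i}(\mathcal{E}^{\vee}(t))$ in terms of the multiplication map $H^{0}(\mathcal{O}_{X}(t))\otimes H^{0}(\mathcal{L})\to H^{0}(\mathcal{L}(t))$, together with terms $H^{i}(\mathcal{O}_{X}(t))$ that vanish because $X$ is ACM with $H^{2}(\mathcal{O}_{X})=0$, and a term $H^{1}(\mathcal{L})=0$ coming from nonspeciality. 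For $t=0$ the tautological evaluation is an isomorphism on $H^{0}$, which yields $H^{1}(\mathcal{E}^{\vee})=0$ at once and $H^{2}(\mathcal{E}^{\vee})=0$ from nonspeciality.

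The heart of the argument is the case $t=1$, where the relevant map is precisely the multiplication $\mu:H^{0}(\mathcal{O}_{X}(1))\otimes H^{0}(\mathcal{L})\to H^{0}(\mathcal{L}(1))$. Tensoring the defining sequence of $M_{C}$ by $\mathcal{L}$ and using $H^{1}(\mathcal{L})=0$ identifies $\ker\mu$ with $H^{0}(M_{C}\otimes\mathcal{L})$ and $\operatorname{coker}\mu$ with $H^{1}(M_{C}\otimes\mathcal{L})$. Since both vanish by the choice of $\mathcal{L}$, the map $\mu$ is an isomorphism, giving $H^{0}(\mathcal{E}^{\vee}(1))=\ker\mu=0$ and, together with $H^{1}(\mathcal{O}_{X}(1))=0$, also $H^{1}(\mathcal{E}^{\vee}(1))=0$. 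This is the step where the theta-divisor hypothesis does all the work, and it is exactly the converse of the computation in Theorem \ref{mainprop1}; everything else is formal cohomology of line bundles on an ACM del Pezzo surface. With the four vanishings established, Proposition \ref{cliffdef}(v) concludes that $\mathcal{E}$ is an Ulrich bundle with the required Chern classes.
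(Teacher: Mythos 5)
Your construction is exactly the paper's: pick $\mathcal{L}\in\textnormal{Pic}^{g-1+r}(C)$ nonspecial, basepoint-free and off $\Theta_{M_C}$ via Lemma \ref{bpfln}, take $\mathcal{E}^{\vee}=\ker\bigl(H^{0}(\mathcal{L})\otimes\mathcal{O}_{X}\to j_{\ast}\mathcal{L}\bigr)$, check local freeness and the Chern classes, and observe that $\ker\mu_1\cong H^{0}(M_{C}\otimes\mathcal{L})$ and $\operatorname{coker}\mu_1\cong H^{1}(M_{C}\otimes\mathcal{L})$, so the theta-divisor hypothesis makes $\mu_1$ an isomorphism. Where you diverge is the endgame: the paper proves that $\mathcal{E}^{\vee}$ is ACM in full, i.e. $H^{1}(\mathcal{E}^{\vee}(t))=0$ for every $t\in\mathbb{Z}$, which forces an induction on $t\geq 2$ (surjectivity of $\xi_t$ via $H^{1}(M_{C}\otimes\mathcal{L}(t-1))$ being a quotient of $H^{1}(M_{C}\otimes\mathcal{L})$) and a separate argument for $t\leq -1$ using $h^{0}(\mathcal{L}(-1))=0$, and then invokes the Chern-class characterization of Proposition \ref{chern}. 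You instead appeal directly to the finite cohomological criterion of Proposition \ref{cliffdef}(v), whose four vanishings translate under Serre duality (with $K_{X_d}=-H$) into $H^{1}(\mathcal{E}^{\vee})=H^{2}(\mathcal{E}^{\vee})=H^{0}(\mathcal{E}^{\vee}(1))=H^{1}(\mathcal{E}^{\vee}(1))=0$, all of which live in the twists $t=0,1$ of the defining sequence; I checked the translation and each vanishing, and they are correct (the $t=0$ ones use only $H^{1}(\mathcal{O}_X)=H^{2}(\mathcal{O}_X)=0$ and $H^{1}(\mathcal{L})=0$, the $t=1$ ones are exactly the theta-divisor input). Your route buys a genuinely shorter verification — no induction, and in fact no need for the condition $h^{0}(\mathcal{L}(-1))=0$ from Lemma \ref{bpfln} — at the cost of leaning on the unproved equivalence (i)$\Leftrightarrow$(v) imported from \cite{ESW}; the paper's longer route yields the ACM property of $\mathcal{E}$ as an explicit byproduct rather than as a consequence of that equivalence. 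Both are valid proofs.
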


\begin{proof}
By the adjunction formula, we have that $g-1+r=\frac{C^{2}-(d-2)r}{2}.$  Since the vanishing of $H^{0}(M_{C} \otimes \mathcal{L})$ is an open condition on $\mathcal{L},$ it follows from our hypothesis and Lemma \ref{bpfln} that there exists a nonspecial and basepoint-free line bundle $\mathcal{L}$ of degree $\frac{C^{2}-(d-2)r}{2}$ on $C$ for which $h^{0}(\mathcal{L}(-1))=0$ and $h^{0}(M_{C} \otimes \mathcal{L})=0.$  We fix such a line bundle for the rest of the proof.

We compose the evaluation map $H^{0}(\mathcal{L}) \otimes \mathcal{O}_{C} \rightarrow \mathcal{L}$ with the restriction map $H^{0}(\mathcal{L}) \otimes \mathcal{O}_{X} \rightarrow H^{0}(\mathcal{L}) \otimes \mathcal{O}_{C}$ to produce a surjection $$\rho: H^{0}(\mathcal{L}) \otimes \mathcal{O}_{X} \rightarrow j_{\ast}\mathcal{L}.$$  (As before, $j$ denotes the inclusion $C \hookrightarrow X.$)

Since $\mathcal{L}$ is nonspecial, we have that $h^{0}(\mathcal{L})=r,$ and it follows that $\ker(\rho)$ is a torsion-free sheaf of rank $r$ on $X.$  Observe that dualizing the sequence
\begin{equation}
\label{deff}
0 \longrightarrow \ker(\rho) \longrightarrow H^{0}(\mathcal{L}) \otimes \mathcal{O}_{X} \overset{\rho}{\longrightarrow} j_{\ast}\mathcal{L} \longrightarrow 0
\end{equation}
yields an isomorphism $\mathcal{E}xt^{1}_{\mathcal{O}_{X}}(\ker(\rho),\mathcal{O}_{X}) \cong \mathcal{E}xt^{2}_{\mathcal{O}_{X}}(j_{\ast}\mathcal{L},\mathcal{O}_{X}).$  Since $j_{\ast}\mathcal{L}$ is supported on a codimension-1 subvariety of $X,$ it follows that $\mathcal{E}xt^{1}_{\mathcal{O}_{X}}(\ker(\rho),\mathcal{O}_{X})=0;$ consequently the torsion-free sheaf $\ker(\rho)$ is locally free.  We define $\mathcal{E}:=\ker(\rho)^{\vee}.$  

We will show that $\mathcal{E}$ is Ulrich.  A straightforward Chern class calculation (e.g. Proposition 5.2.2 in \cite{HL}) applied to (\ref{deff}) shows that $c_1(\mathcal{E})=C$ and $c_2(\mathcal{E})=\frac{C^2-(d-2)r}{2},$ so by Corollary \ref{chern} it suffices to verify that $\mathcal{E}$ is ACM.  Given that $H^1(\mathcal{E}^{\vee}(t)) \cong H^1(\mathcal{E}(-t-1))^{\ast}$ for all $t \in \mathbb{Z}$ by Serre duality, it suffices in turn to show that $\mathcal{E}^{\vee}$ is ACM. 

Twisting (\ref{deff}) by $t \in \mathbb{Z}$ yields the sequence
\begin{equation}
\label{defftwist}
0 \longrightarrow \mathcal{E}^{\vee}(t) \longrightarrow H^{0}(\mathcal{L}) \otimes \mathcal{O}_{X}(t) {\longrightarrow} j_{\ast}\mathcal{L}(t) \longrightarrow 0
\end{equation}


Taking cohomology in (\ref{defftwist}) yields the exact sequence

\begin{small}
\begin{equation*}
H^{0}(\mathcal{L}) \otimes H^{0}(\mathcal{O}_{X}(t)) \overset{\mu_{t}}{\longrightarrow} H^{0}(\mathcal{L}(t)) \longrightarrow H^{1}(\mathcal{E}^{\vee}(t)) \longrightarrow H^{0}(\mathcal{L}) \otimes H^{1}(\mathcal{O}_{X}(t))
\end{equation*}
\end{small}

where $\mu_{t}$ is multiplication of sections.  Since $\mathcal{O}_{X}$ is ACM, $H^{1}(\mathcal{O}_{X}(t))=0,$ and it follows that
\begin{equation}
\label{cokerchar}
H^{1}(\mathcal{E}^{\vee}(t))=\textnormal{coker}(H^{0}(\mathcal{L}) \otimes H^{0}(\mathcal{O}_{X}(t)) \overset{\mu_{t}}{\longrightarrow} H^{0}(\mathcal{L}(t))).
\end{equation}
We will use this characterization of $H^{1}(\mathcal{E}^{\vee}(t))$ to show that $\mathcal{E}^{\vee}$ is ACM.  There are three cases to consider.

\medskip

\textbf{Case I:  $(t \geq 1)$} We proceed by induction on $t.$  Since $H^{0}(M_{C} \otimes \mathcal{L})=0$ by hypothesis and $\chi(M_{C} \otimes \mathcal{L})=0$ by Riemann-Roch, we have $H^{1}(M_{C} \otimes \mathcal{L})=0;$ this will play an important role in what follows.

The base case $t=1$ is established by observing that the vanishing of $H^{i}(M_{C} \otimes \mathcal{L})$ for $i=0,1$ is equivalent to $\mu_{1}$ being an isomorphism.

Assume now that $t \geq 2$ and that $H^{1}(\mathcal{E}^{\vee}(t-1))=0.$  We have the following commutative diagram consisting of multiplication maps:

\medskip
\begin{center}
\begin{tiny}
$\xymatrix{
H^{0}(\mathcal{O}_{X}(t-1)) \otimes H^{0}(\mathcal{O}_{X}(1)) \otimes H^{0}(\mathcal{L}) \ar[r]^-{\mu_{t-1} \otimes \textnormal{id}} \ar[d]^-{\nu_{t} \otimes \textnormal{id}} &H^{0}(\mathcal{L}(t-1)) \otimes H^{0}(\mathcal{O}_{X}(1)) \ar[d]^-{\xi_{t}} \\
H^{0}(\mathcal{O}_{X}(t)) \otimes H^{0}(\mathcal{L}) \ar[r]^-{\mu_{t}} &H^{0}(\mathcal{L}(t)) \\
}$
\end{tiny}
\end{center}
\medskip

Since $\mu_{t-1} \otimes \textnormal{id}$ is surjective by our inductive hypothesis, we will know that $H^{1}(\mathcal{E}^{\vee}(t))=0$ once we check that $\xi_{t}$ is surjective.  Given that $\mathcal{L}$ is nonspecial and $t \geq 2,$ it follows that $\mathcal{L}(t-1)$ is nonspecial, and one easily sees that
\begin{equation}
\textnormal{coker}(\xi_{t}) \cong H^{1}(M_{C} \otimes \mathcal{L}(t-1)).
\end{equation}
If we fix an effective divisor $D \in |\mathcal{O}_{C}(t-1)|,$ we obtain an exact sequence
\begin{equation}
0 \rightarrow M_{C} \otimes \mathcal{L} \rightarrow M_{C} \otimes \mathcal{L}(t-1) \rightarrow (M_{C} \otimes \mathcal{L}(t-1))|_{D} \rightarrow 0
\end{equation}
Taking cohomology then shows that $H^{1}(M_{C} \otimes \mathcal{L}(t-1))$ is a quotient of $H^{1}(M_{C} \otimes \mathcal{L}).$  Since the latter is assumed to be 0, we are done.

\medskip

\textbf{Case II:  $(t=0)$} Immediate.

\medskip

\textbf{Case III: $(t \leq -1)$} Since $H^{0}(\mathcal{L}(-1))=0,$ it follows that $H^{0}(\mathcal{L}(t))=0,$ which implies via (\ref{cokerchar}) that $H^{1}(\mathcal{E}^{\vee}(t))=0.$
\end{proof}

\begin{rem}
If $C$ is a smooth irreducible curve on a smooth AG surface $X \subseteq \mathbb{P}^{n}$ of degree $d$ satisfying $K_{X}=mH,$ and the degree of $C$ is $(\frac{m+3}{2})dr,$ then the bundle $M_{C}$ has slope $-(\frac{m+3}{2n})dr.$  In the case where $X$ is a quadric surface, this slope is $-\frac{r}{3},$ so it is impossible for $M_{C}$ to admit a theta-divisor in ${\rm Pic}^{g-1+r}(C);$ this gives an alternate explanation of why Theorem \ref{main} does not extend to the quadric surface case.
\end{rem}

The remainder of this section is devoted to the construction of Ulrich bundles on $X_d$ of rank $rH$ for $3 \leq d \leq 7$ and $r \geq 2.$  While this has already been done in the papers \cite{CH},\cite{MP2} and \cite{CH1}, we present an alternate construction based on Theorem \ref{main}.  

\begin{prop}
\label{hplusrnc}
Suppose $3 \leq d \leq 7.$  If $Q$ is the class of a rational normal curve of degree $d$ on $X_d,$ then there exists a rank-2 Ulrich bundle $\mathcal{E}$ on $X_d$ with $c_{1}(\mathcal{E})=H+Q.$
\end{prop}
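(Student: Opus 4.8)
The plan is to produce the desired rank-2 Ulrich bundle by invoking Theorem \ref{main} in the direction $(ii) \Rightarrow (i).$ That is, it suffices to exhibit a single smooth irreducible curve $C$ in the linear system $|H+Q|$ whose kernel bundle $M_C$ admits a theta-divisor in $\textnormal{Pic}^{g-1+2}(C).$ First I would verify the numerical setup: since $Q$ is a rational normal curve we have $Q^2 = d-2,$ $Q \cdot H = d,$ and $H^2 = d,$ so $(H+Q) \cdot H = 2d$ and $(H+Q)^2 = d + 2d + (d-2) = 4d-2.$ This is consistent with a rank-2 Ulrich bundle: degree $(H+Q)\cdot H = 2d = d \cdot r$ with $r=2,$ matching (ii) of Theorem \ref{main}. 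By the adjunction formula the genus of a smooth $C \in |H+Q|$ is $g = \frac{C^2 + C\cdot K_X}{2} + 1 = \frac{(4d-2) - 2d}{2} + 1 = d.$

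Next I would check that the general member of $|H+Q|$ is smooth and irreducible so that the construction makes sense. Since $H$ is very ample and $Q$ is globally generated (being the first Chern class of the Ulrich line bundle $\mathcal{O}_{X_d}(Q)$, hence nef and basepoint-free), the class $H+Q$ is very ample, and Bertini's theorem guarantees smooth irreducible members. The heart of the argument is then to show that $M_C$ admits a theta-divisor in $\textnormal{Pic}^{g-1+2}(C) = \textnormal{Pic}^{d+1}(C).$ Here I expect the main obstacle to lie: one must produce a line bundle $\mathcal{L}$ of degree $d+1$ on $C$ with $H^0(M_C \otimes \mathcal{L}) = 0.$

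The approach I would take to this key step exploits the decomposition $H+Q$ and the fact that both $\mathcal{O}_{X_d}(Q)$ and a suitable twist are Ulrich line bundles, or more directly, builds a candidate bundle $\mathcal{E}$ as an extension. Since for $3 \le d \le 7$ the class $Q$ is realized by an Ulrich line bundle $\mathcal{O}_{X_d}(Q),$ and since extensions of Ulrich bundles are Ulrich by Proposition \ref{extcliff}, the natural candidate is a nonsplit extension of $\mathcal{O}_{X_d}(Q)$ by an Ulrich line bundle with first Chern class $H$ — but no such Ulrich line bundle exists unless $H$ itself is a rational normal curve class, which fails in general. Thus the cleaner route is purely via Theorem \ref{main}: I would restrict $M_{X_d}$ (the kernel bundle of the evaluation $H^0(\mathcal{O}_X(1)) \otimes \mathcal{O}_X \to \mathcal{O}_X(1)$) to a smooth $C \in |H+Q|,$ giving $M_C,$ and then find the desired $\mathcal{L}$ by a dimension count on the Brill-Noether-type locus $\Theta_{M_C} \subseteq \textnormal{Pic}^{d+1}(C).$

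The final step is to confirm that $\Theta_{M_C}$ is a proper subset of $\textnormal{Pic}^{d+1}(C),$ equivalently that its expected codimension $1$ is achieved. I would compute $\textnormal{rk}(M_C) = d$ and $\deg(M_C) = -\deg \mathcal{O}_C(1) = -2d,$ so that $M_C \otimes \mathcal{L}$ has slope $\frac{-2d + d(d+1)}{d} = d-1 = g-1,$ placing it exactly in the theta-divisorial range. The expected codimension of $\Theta_{M_C}$ is $1$ by the general theory recalled in the Introduction, and it fails to be all of $\textnormal{Pic}^{d+1}(C)$ precisely when the semistability of $M_C$ holds with the appropriate nondegeneracy. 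The main obstacle is therefore establishing this nonvanishing $H^0(M_C \otimes \mathcal{L}) = 0$ for some $\mathcal{L};$ I anticipate resolving it either by an explicit construction of $\mathcal{E}$ as a small modification of $\mathcal{O}_{X_d}(Q)^{\oplus 2}$ twisted appropriately, or by reducing to the known existence of rank-2 Ulrich bundles with first Chern class $H+Q$ coming from the literature (\cite{CH}, \cite{MP2}) and using Theorem \ref{mainprop1} in reverse to guarantee the theta-divisor. In short, the proof hinges on translating the geometric existence problem into the single cohomological vanishing $H^0(M_C \otimes \mathcal{L}) = 0,$ which Theorem \ref{main} then promotes to the existence of the sought Ulrich bundle.
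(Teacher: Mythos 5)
Your reduction of the problem is the right one, and the numerics ($C \in |H+Q|$ has genus $g=d$, degree $2d$, and one needs a line bundle $\mathcal{L}$ of degree $d+1$ with $H^{0}(M_{C}\otimes\mathcal{L})=0$) all check out. But the proposal stops exactly where the paper's proof begins: you never actually establish the vanishing $H^{0}(M_{C}\otimes\mathcal{L})=0$. The two escape routes you offer do not close the gap. Appealing to the existence of rank-2 Ulrich bundles with first Chern class $H+Q$ from \cite{CH} or \cite{MP2} and running Theorem \ref{mainprop1} backwards is circular in context --- the whole point of this proposition in the paper is to give an alternate construction of those bundles via Theorem \ref{main}, not to rederive them from the literature. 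And the remark that $\Theta_{M_{C}}$ "fails to be all of $\textnormal{Pic}^{d+1}(C)$ precisely when the semistability of $M_{C}$ holds with the appropriate nondegeneracy" is not an argument: as the paper itself notes (citing Section 6.2 of \cite{Po}), semistable bundles of the correct slope need not admit theta-divisors, and the expected codimension of a determinantal locus is only an upper bound for nothing --- it gives no a priori reason the locus is proper.

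The missing idea is the basepoint-free pencil trick. Since $\deg\mathcal{L}=d+1=g+1$ and the Brill--Noether loci $W^{2}_{d+1}(C)$ and $W^{1}_{d}(C)$ are proper (by Martens' theorem), a general $\mathcal{L}$ of degree $d+1$ satisfies $h^{0}(\mathcal{L})=2$ and $|\mathcal{L}|$ is a basepoint-free pencil. The trick gives the exact sequence $0\rightarrow\mathcal{L}^{-1}\rightarrow H^{0}(\mathcal{L})\otimes\mathcal{O}_{C}\rightarrow\mathcal{L}\rightarrow 0$; twisting by $\mathcal{O}_{C}(1)$ and taking cohomology identifies the kernel of the multiplication map $H^{0}(\mathcal{L})\otimes H^{0}(\mathcal{O}_{C}(1))\rightarrow H^{0}(\mathcal{L}(1))$ --- which is $H^{0}(M_{C}\otimes\mathcal{L})$ --- with $H^{0}(\mathcal{L}^{-1}(1))$. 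Now $\mathcal{L}^{-1}(1)$ is a general line bundle of degree $2d-(d+1)=g-1$, so it lies off the theta-divisor $W_{g-1}(C)$ and has no sections. That single computation is the entire content of the proof, and without it (or some substitute) your argument does not establish the proposition.
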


\begin{proof}
Let $C \in |H+Q|$ be a smooth irreducible curve.  It is straightforward to check that $C$ is of genus $d$ and is embedded in $\mathbb{P}^{d}$ as a curve of degree $2d$ by the complete linear series $|\mathcal{O}_{C}(1)|.$  By Theorem \ref{mainprop2}, it suffices to show that for the general line bundle $\mathcal{L}$ of degree $d+1$ on $C,$ the multiplication map $\mu:H^{0}(\mathcal{L}) \otimes H^{0}(\mathcal{O}_{C}(1)) \rightarrow H^{0}(\mathcal{L}(1))$ is an isomorphism.

Since the Brill-Noether loci $W^{2}_{d+1}(C)$ and $W^{1}_{d}(C)$ are proper subvarieties of $\textnormal{Pic}^{d+1}(C)$ and $\textnormal{Pic}^{d}(C),$ respectively, the complete linear series determined by a general line bundle of degree $d+1$ on $C$ is a basepoint-free pencil.  Let $\mathcal{L}$ be such a line bundle.  By the basepoint-free pencil trick, we have
\begin{equation}
0 \rightarrow \mathcal{L}^{-1} \rightarrow H^{0}(\mathcal{L}) \otimes \mathcal{O}_{C} \rightarrow \mathcal{L} \rightarrow 0
\end{equation}
Twisting by 1 and taking cohomology, we see that the kernel of $\mu$ is isomorphic to $H^{0}(\mathcal{L}^{-1}(1)).$  Given that $\mathcal{L}$ is general of degree $d+1,$ the twist $\mathcal{L}^{-1}(1)$ is a general line bundle of degree $d-1.$  Since $W_{d-1}(C)$ is a proper subvariety (indeed, an effective divisor) of $\textnormal{Pic}^{d-1}(C),$ we have that $H^{0}(\mathcal{L}^{-1}(1))=0.$
\end{proof}

\begin{prop}
\label{hypclass}
Suppose $3 \leq d \leq 7.$  For each $r \geq 2$ there exists an Ulrich bundle of rank $r$ on $X_d$ with first Chern class $rH.$
\end{prop}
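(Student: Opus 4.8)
The plan is to induct on $r$, building Ulrich bundles of first Chern class $rH$ by extending lower-rank ones, using the fact (Proposition \ref{extcliff}) that an extension of Ulrich bundles is Ulrich. The base cases are $r=2$ and $r=3$. For $r=2$ I would produce a rank-$2$ Ulrich bundle with $c_1 = 2H$ directly; the cleanest route is to choose a smooth irreducible $C \in |2H|$ and apply Theorem \ref{mainprop2}, which reduces the problem to checking that $M_C$ admits a theta-divisor in $\textnormal{Pic}^{g-1+2}(C)$. By the adjunction formula $C$ has genus $g = d+1$ and degree $2d$, and the theta-divisor condition is equivalent to the surjectivity (indeed isomorphism) of a multiplication map $\mu : H^0(\mathcal{L}) \otimes H^0(\mathcal{O}_C(1)) \to H^0(\mathcal{L}(1))$ for the general line bundle $\mathcal{L}$ of degree $g-1+2$, exactly as in Proposition \ref{hplusrnc}. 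I expect to verify this via a basepoint-free pencil trick and Brill--Noether dimension estimates, identifying the kernel of $\mu$ with $H^0$ of a general line bundle of negative-enough degree.

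For the inductive step, suppose rank-$r$ and rank-$2$ Ulrich bundles with first Chern classes $rH$ and $2H$ exist. To obtain rank $r+2$ I would form an extension
\begin{equation}
0 \rightarrow \mathcal{E}_{r} \rightarrow \mathcal{E}_{r+2} \rightarrow \mathcal{E}_{2} \rightarrow 0,
\end{equation}
where $\mathcal{E}_r$ and $\mathcal{E}_2$ are the known bundles; the simplest choice is the split extension $\mathcal{E}_{r} \oplus \mathcal{E}_2$, which is Ulrich by Proposition \ref{extcliff} and has first Chern class $rH + 2H = (r+2)H$. Since $r=2$ and $r=3$ give both parities, this produces Ulrich bundles of first Chern class $rH$ for every $r \geq 2$. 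In fact, once a rank-$2$ Ulrich bundle $\mathcal{E}_2$ with $c_1 = 2H$ and a rank-$3$ one with $c_1 = 3H$ are in hand, any $rH$ with $r \geq 2$ is a nonnegative combination $aH \cdot 2 + bH \cdot 3$, so the corresponding direct sum $\mathcal{E}_2^{\oplus a} \oplus \mathcal{E}_3^{\oplus b}$ works.

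This reduces everything to the two base cases, and the rank-$3$ case $c_1 = 3H$ is handled exactly as rank $2$: take a smooth irreducible $C \in |3H|$, compute its genus and degree by adjunction, and invoke Theorem \ref{mainprop2} after checking that $M_C$ admits a theta-divisor, i.e. that the relevant multiplication map is an isomorphism for a general line bundle of the appropriate degree. Alternatively, the rank-$3$ bundle can be obtained by a twist of the construction in Proposition \ref{hplusrnc} together with an Ulrich line bundle $\mathcal{O}_{X_d}(Q)$ coming from a rational normal curve $Q$, since $H + Q$ and $Q$ can be combined; but the Theorem \ref{mainprop2} route is more uniform.

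The main obstacle will be the base-case verification that $M_C$ admits a theta-divisor, i.e. that the multiplication map $\mu$ is an isomorphism for the general line bundle $\mathcal{L}$ of degree $g-1+r$ ($r = 2, 3$). The delicate point is controlling the cohomology of $M_C \otimes \mathcal{L}$: via the defining sequence for $M_C$ one reduces the injectivity or surjectivity of $\mu$ to the vanishing of $H^0$ and $H^1$ of a line bundle obtained by twisting $\mathcal{L}^{-1}$ (or $\mathcal{L}$) by $\mathcal{O}_C(1)$, and one must ensure that the resulting degrees land strictly outside the ranges forced by the Brill--Noether loci $W^k_n(C)$ on the relevant Picard varieties. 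This is precisely where the numerical constraints $3 \leq d \leq 7$ and the genus of $C$ enter, so the estimate must be carried out carefully rather than quoted, and it is the step where the argument could fail for $d$ outside this range.
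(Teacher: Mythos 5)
Your reduction to the base cases $r=2,3$ followed by direct sums is exactly the paper's first step, and your $r=2$ argument (a smooth $C\in|2H|$ is a canonical curve of genus $d+1$, and the basepoint-free pencil trick identifies $\ker\mu$ with $H^0(\omega_C\otimes\mathcal{L}^{-1})$, a general line bundle of degree $g-3$) is a correct, if more laborious, alternative to what the paper does. The genuine gap is in the rank-$3$ base case. You assert it is ``handled exactly as rank $2$'' via a smooth $C\in|3H|$, but the mechanism does not transfer: for such a $C$ (genus $3d+1$, degree $3d$) the general $\mathcal{L}$ of degree $g-1+3$ has $h^0(\mathcal{L})=3$, so the syzygy bundle of $\mathcal{L}$ has rank $2$ and $\ker\mu\cong H^0(M_{\mathcal{L}}\otimes\mathcal{O}_C(1))$ is no longer the $H^0$ of a single general line bundle. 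The basepoint-free pencil trick and Brill--Noether dimension counts for line bundles, which are the entire content of your $r=2$ verification, say nothing about this rank-$2$ kernel, and you give no substitute argument. Your fallback is also wrong as stated: combining the bundle of Proposition \ref{hplusrnc} with $\mathcal{O}_{X_d}(Q)$ yields first Chern class $(H+Q)+Q=H+2Q$, which is not $3H$ since $Q^2=d-2\neq d=H^2$.

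The paper sidesteps both issues with one observation you are missing: if $Q$ is a rational normal curve class on $X_d$ ($3\leq d\leq 7$), then so is $2H-Q$, hence $\mathcal{O}_{X_d}(2H-Q)$ is an Ulrich line bundle. This gives $r=2$ immediately as $\mathcal{O}_{X_d}(Q)\oplus\mathcal{O}_{X_d}(2H-Q)$ (no theta-divisor verification needed), and gives $r=3$ as $\mathcal{O}_{X_d}(2H-Q)\oplus\mathcal{F}$ where $\mathcal{F}$ is the rank-$2$ bundle of Proposition \ref{hplusrnc} with $c_1(\mathcal{F})=H+Q$, so that $(2H-Q)+(H+Q)=3H$. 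If you want to keep your framework, replace your rank-$3$ step with this decomposition; otherwise you must prove directly that $M_C\otimes\mathcal{L}$ has no sections for general $\mathcal{L}$ on a general $C\in|3H|$, which is a substantially harder statement than anything in your $r=2$ computation.
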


\begin{proof}
It suffices to check the cases $r=2$ and $r=3,$ since the remaining cases can be treated by taking direct sums.  Fix a rational normal curve class $Q.$  Then $2H-Q$ is also a rational normal curve class; consequently $\mathcal{O}_{X}(Q) \oplus \mathcal{O}_{X}(2H-Q)$ is a rank-2 Ulrich bundle with first Chern class $2H.$

Turning to the rank-3 case, we have from Proposition \ref{hplusrnc} there exists a rank-2 Ulrich bundle $\mathcal{F}$ with $c_{1}(\mathcal{F})=H+Q.$  It follows that $\mathcal{E}':=\mathcal{O}_{X}(2H-Q) \oplus \mathcal{F}$ is an Ulrich bundle of rank 3 with $c_{1}(\mathcal{E}')=3H.$ 
\end{proof}

\begin{rem} The existence of rank-2 Ulrich bundles on $X_d$ with first Chern class $2H$ was deduced in Corollary 6.5 of \cite{ESW}.  The proof of this corollary, which may be considered a precursor to our methods, involves an application of the basepoint-free pencil trick similar to the proof of Proposition \ref{hplusrnc}.
\end{rem}

\section{The Minimal Resolution Conjecture}
\label{relminres}

In this final section we give the proofs of Corollary \ref{minrescor} and Theorem \ref{mrchyp}. Our account of the Minimal Resolution Conjecture (MRC) will be very brief; for further details, see \cite{FMP}, \cite{Cas} and the references therein.

Let $Y \subseteq \mathbb{P}^{N}=\mathbb{P}(V)$ be a proper closed subscheme, let $S=\textnormal{Sym}(V),$ and let $P_{Y}$ be the Hilbert polynomial of $Y.$  Denote by $b_{i,j}(Y)$ the Betti numbers associated to the minimal free resolution

\begin{equation}
\label{basicbetti}
0 \leftarrow I_{Y/\mathbb{P}^{N}} \leftarrow \bigoplus_{j_{1}=1}^{l}S(-j_{1})^{b_{1,j_{1}}(Y)} \leftarrow  \cdots \leftarrow \bigoplus_{j_{k}=1}^{l}S(-j_{k})^{b_{k,j_{k}}(Y)} \leftarrow 0
\end{equation}
of the saturated homogeneous ideal $I_{Y/\mathbb{P}^{N}}.$  The integer $l+1$ is understood to be the Castelnuovo-Mumford regularity
\begin{equation}
\textnormal{reg}(I_{Y/\mathbb{P}^{N}})=\max\{m: b_{s,m-1}(Y) \neq 0 \textnormal{ for some }s\}.
\end{equation}
The resolution (\ref{basicbetti}) may be encoded by an array as follows:
\medskip
\begin{center}
\begin{tiny}
$\begin{array}{ccccc}
1 & - & \cdots & - & - \\
-  & b_{1,1}(Y) & \cdots & b_{k-1,1}(Y) & b_{k,1}(Y) \\
-  & b_{1,2}(Y) & \cdots & b_{k-1,2}(Y) & b_{k,2}(Y) \\
\vdots & \vdots & \vdots & \vdots & \vdots \\
- & b_{1,l}(Y) & \cdots & b_{k-1,l}(Y) & b_{k,l}(Y)
\end{array}$
\end{tiny}
\end{center}
\medskip
This array is the \textit{Betti diagram} of $Y.$  (The reader should be aware that other indexing conventions are in wide use.)

\begin{defn}
Let $\Gamma$ be a zero-dimensional subscheme of $Y.$  We say that $\Gamma$ \textit{satisfies the MRC for $Y$} if
\begin{equation}
b_{i+1,q-1}(\Gamma) \cdot b_{i,q}(\Gamma)=0 \hskip0.5cm \textnormal{for all }i
\end{equation}
whenever $q \geq \textnormal{reg}(I_{Y/\mathbb{P}^{N}})+1.$
\end{defn}

Corollary \ref{minrescor} follows from Theorem \ref{main} and a special case of Corollary 1.8 in \cite{FMP} which we now state without proof.  (Compare Remark 1.10 in \textit{loc.~cit.})

\begin{prop}
\label{fmpmrc}
Let $C \subseteq \mathbb{P}^{n}$ be a smooth irreducible curve of genus $g$ and degree $d$ and let $P_{C}(t)$ be the Hilbert polynomial of $C.$  Assume further that $n|d.$  Then every collection $\Gamma$ of $\gamma \geq \max\{g,P_{C}(\textnormal{reg}(I_{C|\mathbb{P}(V)})\}$ general points on $C$ satisfies $MRC$ if and only if for every $i \leq \frac{n}{2}$ and a general line bundle $\xi \in \textnormal{Pic}^{g-1+\frac{di}{n}}(C)$ we have that $H^{0}(\wedge^{i}M_{C} \otimes \xi)=0.$  \hfill \qedsymbol
\end{prop}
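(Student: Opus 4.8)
The plan is to convert the numerical MRC condition on two consecutive graded Betti numbers of a general point set $\Gamma$ into a cohomological vanishing for the twisted exterior powers of the kernel bundle $M_C$, and then to collapse the two symmetric halves of the Betti diagram by duality. I would carry this out in three stages. \textbf{Stage 1 (Betti numbers via Koszul cohomology).} For a general collection $\Gamma$ of $\gamma \geq \max\{g, P_C(\mathrm{reg}(I_{C|\mathbb{P}(V)}))\}$ points, the regularity hypothesis forces the Hilbert function of $\Gamma$, so every entry of the Betti diagram is determined except possibly along a single critical anti-diagonal. Identifying graded Betti numbers with Koszul cohomology, I would rewrite the MRC condition $b_{i+1,q-1}(\Gamma)\cdot b_{i,q}(\Gamma)=0$ as the requirement that, for each $i$, the relevant Koszul differential in the critical strand $q$ have maximal rank; here $\gamma$ and $q$ together single out the homological index $i$ under examination.

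\textbf{Stage 2 (passage to $M_C$).} Taking exterior powers of the defining sequence $0 \to M_C \to H^0(\mathcal{O}_C(1))\otimes\mathcal{O}_C \to \mathcal{O}_C(1)\to 0$ yields, for each $i$, a short exact sequence $0 \to \wedge^i M_C \to \wedge^i H^0(\mathcal{O}_C(1))\otimes\mathcal{O}_C \to \wedge^{i-1}M_C\otimes\mathcal{O}_C(1)\to 0$. I would set $\xi := \mathcal{O}_C(\Gamma)(-q)$, whose degree $\gamma-qd$ equals $g-1+\tfrac{di}{n}$ for the critical pair $(i,q)$, twist this sequence by $\xi$, and take cohomology; the cokernel and kernel of the critical Koszul differential are then expressed through $H^0(\wedge^i M_C\otimes\xi)$ and $H^1(\wedge^i M_C\otimes\xi)$. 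Because $\det M_C\cong\mathcal{O}_C(-1)$ forces $\wedge^i M_C\otimes\xi$ to have slope $g-1$, its Euler characteristic vanishes, so $H^0=0$ if and only if $H^1=0$, and either vanishing is exactly the maximal-rank condition of Stage 1.

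\textbf{Stage 3 (duality, reduction to $i \le n/2$).} From $\det M_C\cong\mathcal{O}_C(-1)$ I get $\wedge^{n-i}M_C\cong(\wedge^i M_C)^\vee\otimes\mathcal{O}_C(-1)$; combining this with Serre duality on $C$ identifies $H^0(\wedge^i M_C\otimes\xi)$ with $H^1(\wedge^{n-i}M_C\otimes\xi')$, where $\xi':=K_C\otimes\mathcal{O}_C(1)\otimes\xi^{-1}$ is a line bundle of the complementary degree $g-1+\tfrac{d(n-i)}{n}$. Since slope $g-1$ again gives $\chi=0$, the theta-divisor condition for $\wedge^i M_C$ is equivalent to that for $\wedge^{n-i}M_C$. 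This reflects the symmetry $b_{i,q}\leftrightarrow b_{n-i,\bullet}$ in the Betti diagram and lets me restrict to $i\le n/2$.

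\textbf{Main obstacle.} The delicate point is the genericity matching in Stages 1 and 2: one must show that, as $\Gamma$ varies over general point sets, the twist $\xi=\mathcal{O}_C(\Gamma)(-q)$ sweeps out a dense subset of $\mathrm{Pic}^{g-1+di/n}(C)$ (an Abel--Jacobi surjectivity together with a fiber-dimension count), and simultaneously that the regularity bound on $\gamma$ confines all non-predicted Betti numbers to the single strand computed by $\wedge^i M_C\otimes\xi$. Once this dictionary is in place, the equivalence follows by semicontinuity: a \emph{general} $\Gamma$ satisfies MRC if and only if $H^0(\wedge^i M_C\otimes\xi)=0$ for \emph{general} $\xi$, for every $i\le n/2$.
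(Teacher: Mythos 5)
First, a point of reference: the paper itself does \emph{not} prove this proposition --- it is stated explicitly without proof as a special case (under the hypothesis $n \mid d$) of Corollary 1.8 of \cite{FMP}. So there is no in-paper argument to measure yours against; what you have written is a blind reconstruction of the Farkas--Musta\c{t}\v{a}--Popa argument. Your outline does capture its skeleton correctly: Betti numbers via Koszul cohomology, the exterior-power sequences of $M_C$, the Euler-characteristic-zero (theta-divisor) condition, and the reduction to $i \le n/2$ via $\wedge^{n-i}M_C \cong (\wedge^{i}M_C)^{\vee} \otimes \mathcal{O}_C(-1)$ together with Serre duality. Stage 3 in particular is complete and correct, including the degree bookkeeping $\deg \xi' = g-1+\tfrac{d(n-i)}{n}$.

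The gap is in the dictionary of Stages 1--2. For a fixed general $\Gamma$ of degree $\gamma$, the critical strand $q$ is determined by $\gamma$, and MRC for that single $\Gamma$ demands $b_{i+1,q-1}(\Gamma)\cdot b_{i,q}(\Gamma)=0$ for \emph{every} $i$ simultaneously; but your twist $\xi=\mathcal{O}_C(\Gamma)(-q)$ has one degree $\gamma-qd$, and $\chi(\wedge^{i}M_C\otimes\xi)=0$ can hold for at most one value of $i$ (the one with $\tfrac{di}{n}=\gamma-qd-(g-1)$). For all other $i$ the bundle $\wedge^{i}M_C\otimes\xi$ has $\chi\neq 0$, and what MRC requires there is \emph{generic cohomology}, $\min(h^0,h^1)=0$, which is not the theta-divisor condition and is not addressed by your sketch; one must show it follows from the $\chi=0$ vanishings (in \cite{FMP} this is done by twisting by general points, using $h^0(E\otimes\xi_0(-p))\le h^0(E\otimes\xi_0)$ and the surjection $H^1(E\otimes\xi_0)\twoheadrightarrow H^1(E\otimes\xi_0(p))$, plus semicontinuity). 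Equally, the identification $b_{i+1,q-1}(\Gamma)=h^1(\wedge^{i}M_C\otimes\xi)$, $b_{i,q}(\Gamma)=h^0(\wedge^{i}M_C\otimes\xi)$ is asserted rather than proved: it rests on comparing the minimal resolution of $\Gamma$ with that of $C$ via a mapping cone and verifying that the cone is minimal in the critical rows, which is the technical heart of \cite{FMP} (their Theorem 1.2) and is precisely what your regularity hypothesis on $\gamma$ is for. Finally, the Abel--Jacobi surjectivity you flag as the ``main obstacle'' is genuinely needed and is exactly where the requirement $\gamma\ge g$ enters; it is standard, but without it the quantifier ``general $\xi$'' cannot be matched to ``general $\Gamma$.''
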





We now illustrate Corollary \ref{minrescor} with an example.  Let $C$ be a smooth member of the linear system $|5{\ell}-4e_{1}-e_{2}-e_{3}|$ on the smooth cubic surface $X_3,$ where $\ell$ is the pullback of the hyperplane class via a blowdown $p : X \rightarrow \mathbb{P}^{2}$ of the mutually disjoint (-1)-curves $e_1, \cdots ,e_6.$  This is a smooth rational curve of degree $9$ on $X_3;$ however, since $C^2 < 3$ it follows from Proposition \ref{csquared} that $C$ cannot represent the first Chern class of any Ulrich bundle of rank 3 on $X_3.$  The minimal graded resolution of $C \subseteq \mathbb{P}^{3}$ has the Betti diagram
\medskip
\begin{center}
\begin{tiny}
$\begin{array}{cccc}
1 & - & - & -  \\
-  & - & - & -  \\
-  &1 & - & -  \\
-  &- & - & - \\
- & 3 & 3 & - \\
- & 1 & 2 & 1 \\
- & 1 & 2 & 1 \\
- & 1 & 2 & 1
\end{array}$
\end{tiny}
\end{center}
\medskip
Since $P_{C}(t)=9t+1$ and $\textnormal{reg}(I_{C/\mathbb{P}^{3}})=8,$ we have that $P_{C}(\textnormal{reg}(I_{C/\mathbb{P}^{3}}))=73.$  If $\Gamma \subseteq C$ is a collection of 75 general points on $C,$ then the Betti diagram of $\Gamma$ is
\medskip
\begin{center}
\begin{tiny}
$\begin{array}{cccc}
1 & - & - & -  \\
-  & - & - & -  \\
-  &1 & - & -  \\
-  &- & - & - \\
- & 3 & 3 & - \\
- & 1 & 2 & 1 \\
- & 1 & 2 & 1 \\
- & 1 & 2 & 1 \\
- & 7 & 12 & 4 \\
- & - & 1 & 2
\end{array}$
\end{tiny}
\end{center}
\medskip
Note that this differs from the Betti diagram of $C$ only in the last two rows.  Given that $b_{3,8}(\Gamma)=4$ and $b_{2,9}(\Gamma)=1,$ the set $\Gamma$ fails to satisfy MRC for $C.$

\begin{lem}
\label{ratnorm}
Let $Q \subseteq X_d$ be a rational normal curve of degree $d.$  Then $Q$ satisfies MRC.
\end{lem}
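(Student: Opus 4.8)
The plan is to deduce the lemma from Proposition \ref{fmpmrc}, which converts the MRC for a curve into a family of cohomological vanishings for the exterior powers of its kernel bundle. First I would record the basic numerology of $Q$: it is a smooth rational curve (genus $g=0$) of degree $d$ spanning $\mathbb{P}^{d}$, so in the notation of Proposition \ref{fmpmrc} the ambient dimension $n$ equals the degree $d$ of the curve and the divisibility hypothesis $n \mid d$ holds trivially. Substituting $g=0$, curve-degree $d$, and $n=d$, the Picard degree $g-1+\frac{di}{n}$ appearing there collapses to $i-1$. Thus Proposition \ref{fmpmrc} reduces the lemma to showing that for each $i \leq \frac{d}{2}$ and a general (equivalently, any) line bundle $\xi \in \textnormal{Pic}^{i-1}(Q)$ one has $H^{0}(\wedge^{i}M_{Q} \otimes \xi)=0$.

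The key point is then to identify $M_{Q}$ explicitly. Because $Q$ is nondegenerate in $\mathbb{P}^{d}$, the restriction $H^{0}(\mathcal{O}_{X_{d}}(1)) \to H^{0}(\mathcal{O}_{Q}(1))$ is an isomorphism (both spaces have dimension $d+1$ and the map is injective), so $M_{Q}$ coincides with the syzygy bundle of the complete linear system $|\mathcal{O}_{\mathbb{P}^{1}}(d)|$ under the identification $Q \cong \mathbb{P}^{1}$. From the defining sequence $0 \to M_{Q} \to H^{0}(\mathcal{O}_{Q}(1)) \otimes \mathcal{O}_{Q} \to \mathcal{O}_{Q}(1) \to 0$ I read off that $M_{Q}$ has rank $d$ and degree $-d$, and that $H^{0}(M_{Q})=0$ since the evaluation map is an isomorphism on global sections. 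By Grothendieck's splitting theorem $M_{Q} \cong \bigoplus_{j}\mathcal{O}_{\mathbb{P}^{1}}(a_{j})$; the vanishing $H^{0}(M_{Q})=0$ forces every $a_{j} \leq -1$, and the constraints $\sum_{j}a_{j}=-d$ with exactly $d$ summands then pin down $M_{Q} \cong \mathcal{O}_{\mathbb{P}^{1}}(-1)^{\oplus d}$.

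With this description the remaining vanishings are immediate. Taking exterior powers gives $\wedge^{i}M_{Q} \cong \mathcal{O}_{\mathbb{P}^{1}}(-i)^{\oplus \binom{d}{i}}$, and since every line bundle of degree $i-1$ on $\mathbb{P}^{1}$ is isomorphic to $\mathcal{O}_{\mathbb{P}^{1}}(i-1)$, we get $\wedge^{i}M_{Q} \otimes \xi \cong \mathcal{O}_{\mathbb{P}^{1}}(-1)^{\oplus \binom{d}{i}}$, which has no global sections. Hence $H^{0}(\wedge^{i}M_{Q} \otimes \xi)=0$ for every $i$, and Proposition \ref{fmpmrc} yields the claim. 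I do not anticipate a genuine obstacle here: once the problem is routed through Proposition \ref{fmpmrc}, the only substantive step is the splitting $M_{Q} \cong \mathcal{O}_{\mathbb{P}^{1}}(-1)^{\oplus d}$, which is forced by the section-count on $\mathbb{P}^{1}$ rather than requiring any delicate Brill--Noether input.
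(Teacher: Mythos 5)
Your proof is correct, and its overall skeleton matches the paper's: both reduce the lemma to the vanishings $H^{0}(\wedge^{i}M_{Q}\otimes\xi)=0$ via Proposition \ref{fmpmrc} and both hinge on the identification $M_{Q}\cong\mathcal{O}_{\mathbb{P}^1}(-1)^{\oplus d}$. The one substantive difference is how that splitting is obtained. The paper invokes Corollary \ref{curvess} (hence Theorem \ref{main}) to conclude that $M_Q$ is semistable of rank $d$ and degree $-d$, and then applies Grothendieck's theorem; you instead note that the evaluation map is an isomorphism on global sections, so $H^{0}(M_Q)=0$ forces every summand $\mathcal{O}_{\mathbb{P}^1}(a_j)$ to have $a_j\leq -1$, and the degree count pins all $a_j$ at $-1$. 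Your route is more elementary and makes the lemma logically independent of the main theorem (the paper's argument is not circular, since Theorem \ref{main} does not use Lemma \ref{ratnorm}, but yours removes the dependency altogether); the paper's version is a one-line deduction given the machinery already in place. You are also slightly more careful than the paper in one respect: you verify that $H^{0}(\mathcal{O}_{X_d}(1))\to H^{0}(\mathcal{O}_{Q}(1))$ is an isomorphism, so that the kernel bundle defined with sections from $X_d$ agrees with the syzygy bundle of the complete linear system $|\mathcal{O}_{\mathbb{P}^1}(d)|$. Checking only $H^0$ rather than both $H^0$ and $H^1$ is harmless, since Proposition \ref{fmpmrc} requires only the $H^0$ vanishing (and $\chi(\wedge^i M_Q\otimes\xi)=0$ makes the two equivalent anyway).
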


\begin{proof}
Since $\mathcal{O}_{X_d}(Q)$ is an Ulrich line bundle, we have from Corollary \ref{curvess} that $M_Q$ is a semistable vector bundle on $Q \cong \mathbb{P}^1$ of rank $d$ and degree $-d.$  Grothendieck's Theorem combined with semistability implies that $M_{Q} \cong \mathcal{O}_{\mathbb{P}^1}(-1)^{\oplus d},$ so that $\wedge^{i}M_{Q} \cong \mathcal{O}_{\mathbb{P}^1}(-i)^{\oplus \binom{d}{i}}.$  Consequently, for $j=0,1$ we have $$H^{j}(\wedge^{i}M_{Q} \otimes \mathcal{O}_{\mathbb{P}^1}(i-1))=H^{j}(\mathcal{O}_{\mathbb{P}^1}(-1))^{\oplus \binom{d}{i}}=0.$$  The desired result then follows from Proposition \ref{fmpmrc}.
\end{proof}

We now turn to the proof of Theorem \ref{mrchyp}.  Recall from the Introduction that the Ulrich semigroup $\mathfrak{Ulr}(X_d)$ of ${\rm Pic}(X_d)$ is the set of divisor classes which are first Chern classes of Ulrich bundles on $X_d.$ 

\begin{prop}
\label{mrcsum}
Let $D_1, \cdots ,D_m \in \mathfrak{Ulr}(X_d)$ satisfy the property that for each $j=1, \cdots ,m$ the general smooth member $C_j$ of $|D_j|$ satisfies the MRC.  Then the general smooth member $C$ of $|D_1 + \cdots + D_m|$ satisfies the MRC.
\end{prop}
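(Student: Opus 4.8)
The plan is to reduce, by Proposition~\ref{fmpmrc}, the MRC for a general smooth $C\in|D_1+\cdots+D_m|$ to the assertion that for each $i\le d/2$ the bundle $\wedge^iM_C$ admits a theta-divisor in $\textnormal{Pic}^{\,g-1+ri}(C)$, where $r=r_1+\cdots+r_m$ and $g$ is the genus of $C$. Writing $M_X:=\ker(H^0(\mathcal{O}_{X_d}(1))\otimes\mathcal{O}_{X_d}\to\mathcal{O}_{X_d}(1))$, the identification $M_C\cong M_X|_C$ from the proof of Theorem~\ref{mainprop1} gives $\wedge^iM_C\cong(\wedge^iM_X)|_C$, so that $\wedge^iM_C$ is the restriction to $C$ of the \emph{fixed} vector bundle $\mathfrak{M}_i:=\wedge^iM_X$ on $X_d$. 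Since $\mathfrak{Ulr}(X_d)$ is closed under addition, $D_1+\cdots+D_m$ is again a first Chern class of an Ulrich bundle; hence $\deg C=dr$ (so the divisibility hypothesis $d\mid\deg C$ of Proposition~\ref{fmpmrc} holds) and the general member of $|D_1+\cdots+D_m|$ is smooth and irreducible by Lemma~\ref{numprop}. Because only finitely many values of $i$ occur, and by the openness in Proposition~\ref{thetaopen} applied to $\mathfrak{M}=\mathfrak{M}_i$, it suffices to exhibit, for each such $i$, a \emph{single} smooth member of $|D_1+\cdots+D_m|$ to which $\mathfrak{M}_i$ restricts with a theta-divisor.

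To find such a member I would degenerate to the reducible nodal curve $C_0:=C_1\cup\cdots\cup C_m$, with each $C_j$ a general smooth member of $|D_j|$ meeting the others transversally; by Bertini this $C_0$ is a nodal member of $|D_1+\cdots+D_m|$ and a flat limit of smooth members. Let $\nu:\bigsqcup_jC_j\to C_0$ be the partial normalization separating the $\delta=\sum_{j<k}C_j\cdot C_k$ nodes, so that $\nu^*(\mathfrak{M}_i|_{C_0})=\bigoplus_j\wedge^iM_{C_j}$. The hypothesis that each $C_j$ satisfies MRC yields, through Proposition~\ref{fmpmrc}, a line bundle $L_j$ of degree $g_j-1+r_ji$ on $C_j$ with $H^0(\wedge^iM_{C_j}\otimes L_j)=0$; as these are open conditions a single $L_j$ can be chosen to work for all $i\le d/2$ simultaneously. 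Setting $F:=\nu_*\bigl(\bigoplus_jL_j\bigr)$, the projection formula gives
\begin{equation}
H^0\bigl(C_0,\mathfrak{M}_i|_{C_0}\otimes F\bigr)\cong\bigoplus_{j=1}^mH^0\bigl(C_j,\wedge^iM_{C_j}\otimes L_j\bigr)=0.
\end{equation}
A short Euler-characteristic count, using $g=\sum_jg_j+\delta-(m-1)$, shows $\chi(F)=ir$, which is exactly the Euler characteristic of a degree-$(g-1+ri)$ line bundle on a smooth fiber; thus $F$ is a point of the compactified Jacobian $\overline{\textnormal{Pic}}^{\,g-1+ri}(C_0)$ at which the theta-condition for $\mathfrak{M}_i$ holds.

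It then remains to propagate this vanishing to a smooth member. I would take a smoothing $\mathcal{C}\to B$ of $C_0$ inside $|D_1+\cdots+D_m|$ together with the relative compactified Jacobian $\overline{\mathcal{J}}^{\,g-1+ri}\to B$, which is proper over $B$. The locus on which $h^0(\mathfrak{M}_i\otimes(-))\neq0$ is closed by semicontinuity and, by the previous paragraph, does not contain the whole fiber over $[C_0]$; hence its complement meets the general fiber of $B$, which for a smooth curve is an honest $\textnormal{Pic}^{\,g-1+ri}$. This produces a smooth member of $|D_1+\cdots+D_m|$ for which $\mathfrak{M}_i$ admits a theta-divisor. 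Proposition~\ref{thetaopen} then upgrades this to the general smooth member; intersecting the resulting open sets over the finitely many $i\le d/2$ and applying Proposition~\ref{fmpmrc} once more yields the MRC for the general smooth member of $|D_1+\cdots+D_m|$.

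The main obstacle is precisely this last propagation: one must ensure that the maximally degenerate torsion-free sheaf $F=\nu_*(\bigoplus_jL_j)$ is a flat limit of line bundles on the \emph{smooth} fibers, i.e.\ that it lies on the component of $\overline{\mathcal{J}}^{\,g-1+ri}$ dominating $B$. This requires care because the compactified Jacobian of the reducible curve $C_0$ is in general reducible; I would resolve it either by arranging the smoothing and the multidegree of $F$ so that $F$ lies in the closure of the relative Picard scheme of the smooth locus, or by a semistable-reduction argument exhibiting an explicit one-parameter family of line bundles degenerating to $F$.
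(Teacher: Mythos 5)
Your overall strategy coincides with the paper's: reduce via Proposition~\ref{fmpmrc} to producing theta-divisors for $\wedge^{i}M_{C}$, degenerate to the nodal union $C_{1}\cup\cdots\cup C_{m}$, get the $H^{0}$-vanishing on the central fiber from the component-wise vanishings, and conclude by semicontinuity plus Proposition~\ref{thetaopen}. The bookkeeping you do is also correct: $\nu^{*}(\mathfrak{M}_{i}|_{C_{0}})=\bigoplus_{j}\wedge^{i}M_{C_{j}}$, the projection-formula computation of $H^{0}(C_{0},\mathfrak{M}_{i}|_{C_{0}}\otimes F)$, and the count $\chi(F)=ri$ all check out.

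The genuine gap is exactly the one you flag at the end and do not close: you must show that $F=\nu_{*}\bigl(\bigoplus_{j}L_{j}\bigr)$, which fails to be locally free at every node of $C_{0}$, lies in the closure of the relative Picard scheme of the smooth fibers inside $\overline{\mathcal{J}}^{\,g-1+ri}$. Without this, semicontinuity tells you nothing about line bundles on nearby smooth curves, and for a reducible nodal curve the relative compactified Jacobian is genuinely delicate (multidegree/stability strata, possible extra components), so ``arranging the smoothing'' is not a proof. The paper avoids the compactified Jacobian altogether by performing the semistable reduction you mention only as a fallback: it blows up the total space of the pencil at the $\delta$ nodes, so the central fiber becomes $\widetilde{C_{1}}\cup\widetilde{C_{2}}\cup E_{1}\cup\cdots\cup E_{\delta}$ with exceptional $\mathbb{P}^{1}$'s inserted at the nodes; on this fiber one can write down an honest \emph{line bundle} $\widetilde{L}$ restricting to $L_{1}$, $L_{2}$ on the strict transforms and to $\mathcal{O}_{E_{j}}(1)$ on the exceptional curves (total degree again $g-1+ri$), and after a finite base change this extends to a line bundle on the whole family. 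Since the pulled-back kernel bundle restricts trivially to each $E_{j}$, the normalization sequence still gives $H^{0}=0$ on the central fiber, and semicontinuity for a \emph{line bundle on a flat family} applies with no smoothability issue. So your argument is not wrong in outline, but the step you defer is the entire technical content of the proposition; to complete it you should either carry out the blow-up construction (as the paper does, and note it also reduces to $m=2$ by induction rather than handling all components at once) or supply an actual proof that $F$ is a flat limit of line bundles on the smooth fibers.
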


\begin{proof} 
It suffices to handle the case $m=2;$ the general case will then follow by induction.

For $j=1,2,$ let $dr_{j}$ and $g_j$ be the degree and genus, respectively, of each member of the linear system $|D_j|,$ and define $r:=r_{1}+r_{2}.$  Fix $i \leq \frac{d}{2}.$  By our hypothesis together with Propositions \ref{thetaopen} and \ref{fmpmrc}, we may choose for $j=1,2$ a smooth curve $C_j \in |D_j|$ satisfying the following properties:
\begin{itemize}
\item[(i)]{$C_{1}+C_{2} \in |D_{1}+D_{2}|.$}
\item[(ii)]{$C_{1}$ meets $C_{2}$ transversally in $m:=C_1 \cdot C_2$ points.}
\item[(iii)]{$\wedge^{i}M_{C_j}$ admits a theta-divisor in $\textnormal{Pic}^{g_{j}-1+r_{j}i}(D).$}
\end{itemize}
Any member of the linear system $|D_{1}+D_{2}|$ has arithmetic genus equal to $g:=g_{1}+g_{2}-m+1.$  We will construct a pencil of smooth curves in $|D_{1}+D_{2}|$ degenerating to the nodal curve $C_1+C_2$ and show that for a general smooth member $C$ of this pencil, the vector bundle $\wedge^{i}M_{C}$ admits a theta-divisor in $\textnormal{Pic}^{g-1+ri}(C).$     

Since $D_1+D_2$ is the first Chern class of an Ulrich bundle of rank 2 or greater, the linear system $|D_{1}+D_{2}|$ is basepoint-free of dimension at least 2, so we may choose a sub-pencil $f:X_d \dashrightarrow \mathbb{P}^{1}$ of $|D_{1}+D_{2}|$ with $\overline{f^{-1}(0)}=C_{1}+C_{2}$ whose general member is smooth and whose base locus does not meet the singular locus of $C_{1}+C_{2}.$  

We may use the blowup $\alpha: \widetilde{X_d} \rightarrow X_d$ of $X_d$ at the base locus of $f$ to resolve indeterminacy and obtain a morphism $\widetilde{f}:\widetilde{X_d} \rightarrow \mathbb{P}^{1}$ whose fibers are the members of $f.$  Let $\beta: \widetilde{Y_d} \rightarrow \widetilde{X_d}$ be the blowup of $\widetilde{X_d}$ at the $m$ nodes of $C_1+C_2$ with exceptional divisors $E_{1}, \cdots ,E_{m},$ let $\widetilde{g}:=\widetilde{f}\circ \beta,$ and let $T=\{0\} \cup \{t \in \mathbb{P}^{1}: \widetilde{g}^{-1}(t) \textnormal{ is smooth}\}.$  In what follows, we will consider the family $\widetilde{g}_{T}:=\widetilde{g}|_{\widetilde{g}^{-1}(T)}:\widetilde{Y_d}_{T} \rightarrow T$ whose central fiber $C_{0}$ is $\widetilde{C_{1}} \cup \widetilde{C_{2}} \cup E_{1} \cup \cdots \cup E_{m},$ where $\widetilde{C_{1}}$ and $\widetilde{C_{2}}$ are the strict transforms under $\alpha \circ \beta$ of $C_{1}$ and $C_{2},$ respectively.

Define $M_{\widetilde{Y_d}_{T}}:=(\alpha \circ \beta|_{\widetilde{Y_d}_{T}})^{\ast}(\Omega^{1}_{\mathbb{P}^{d}}(1) \otimes \mathcal{O}_{X_d}).$  Then we have that 
\medskip
\begin{equation}
\label{rest1}
\wedge^{i}M_{\widetilde{Y_d}_{T}}|_{\widetilde{C_{1}}} \cong \wedge^{i}M_{C_{1}}, \hskip1cm \wedge^{i}M_{\widetilde{Y_d}_{T}}|_{\widetilde{C_{2}}} \cong \wedge^{i}M_{C_{2}}
\end{equation}
\begin{equation}
\label{rest2}
\wedge^{i}M_{\widetilde{Y_d}_{T}}|_{E_{j}} \cong \mathcal{O}_{E_{j}}^{\oplus \binom{d}{i}} \textnormal{ for }1 \leq j \leq m
\end{equation}
\begin{equation}
\label{rest3}
\wedge^{i}M_{\widetilde{Y_d}_{T}}|_{\widetilde{g}_{T}^{-1}(t)} \cong \wedge^{i}M_{\widetilde{f}^{-1}(t)} \textnormal{ for }t \in T-\{0\}
\end{equation}  
\medskip
By our assumptions on $C_{1}$ and $C_{2},$ for $j=1,2$ there exists a nonempty Zariski-open subset $\mathcal{U}_{j}$ of $\textnormal{Pic}^{g_{j}-1+r_{j}i}(C')$ such that for all $L_{j} \in \mathcal{U}$ we have the vanishings
\begin{equation}
\label{vann1}
H^{i}(C_{j},\wedge^{i}M_{C_{j}} \otimes L_{j})=0 
\end{equation}
Since the normalization of $C_{0}$ is the disjoint union $\widetilde{C'} \sqcup \widetilde{Q} \sqcup E_{1} \sqcup \cdots \sqcup E_{m}$, we have that for each $L_{1}$ and $L_{2}$ as above there exists a line bundle $\widetilde{L}$ on the singular curve $C_{0}$ such that 
\begin{equation}
\label{rest4}
\widetilde{L}|_{\widetilde{C_{1}}} \cong L_{1}, \hskip1cm \widetilde{L}|_{\widetilde{C_{2}}} \cong L_{2}, \hskip1cm \widetilde{L}|_{E_{i}} \cong \mathcal{O}_{E_{j}}(1) \textnormal{ for }1 \leq j \leq m 
\end{equation}
Such line bundles form a nonempty Zariski-open subset $\mathcal{V}$ of the Picard scheme $\textnormal{Pic}^{(g'-1+(r-1)i,i-1,1, \cdots 1)}(C_{0})$ parametrizing isomorphism classes of line bundles on $C_{0}$ whose restrictions to $\widetilde{C}_{1}$ and $\widetilde{C}_{2}$ have respective degrees $g_{1}-1+r_{1}i$ and $g_{2}-1+r_{2}i$ and whose restriction to $E_{j}$ has degree 1 for $1 \leq j \leq m.$  Note that the sum of the degrees of these restrictions is $g-1+ri.$

Fix a line bundle $\widetilde{L} \in \mathcal{V}.$  Passing to a finite base change $T' \rightarrow T$ if necessary, we see that there exists a line bundle $\mathcal{L}$ on $\widetilde{Y}_{T}$ whose restriction to each fiber of $\widetilde{g}_{T}$ has degree $g-1+ri$ and whose restriction to $C_{0}$ is isomorphic to $L.$  

Consider the exact sequence  
\begin{equation}
0 \rightarrow \bigoplus_{j=1}^{m}\mathcal{O}_{E_{j}}(-2) \rightarrow \mathcal{O}_{C_{0}} \rightarrow \mathcal{O}_{\widetilde{C_{1}}} \oplus \mathcal{O}_{\widetilde{C_{2}}} \rightarrow 0
\end{equation}
where the arrow into $\mathcal{O}_{C_{0}}$ is extension by zero and the arrow out of $\mathcal{O}_{C_{0}}$ is the direct sum of the restriction maps $\mathcal{O}_{C_{0}} \rightarrow \mathcal{O}_{\widetilde{C_{1}}}$ and $\mathcal{O}_{C_{0}} \rightarrow \mathcal{O}_{\widetilde{C_{2}}}.$  If we twist this by $M_{\widetilde{Y}_{T}} \otimes \mathcal{L}$ and take cohomology, it follows from (\ref{rest1}), (\ref{rest2}), (\ref{vann1}) and (\ref{rest3}) that $H^{0}(C_{0},(M_{\widetilde{Y}_{T}} \otimes \mathcal{L})|_{C_{0}})=0.$  By semicontinuity, we then have that for $H^{0}(\widetilde{g}^{-1}(t),M_{W,\widetilde{g}^{-1}(t)} \otimes \mathcal{L}|_{\widetilde{g}^{-1}(t)})=0$ for general $t \in T-\{0\}.$       
\end{proof}

\textit{Proof of Theorem \ref{mrchyp}}:  The implication $(i) \Rightarrow (ii)$ follows from Propositions \ref{thetaopen} and \ref{mrcsum}.  On the other hand, $(ii) \Rightarrow (i)$ is an immediate consequence of Theorem \ref{main} and Proposition \ref{fmpmrc}. \hfill \qedsymbol

\medskip

Combining Theorem \ref{mrchyp} with Lemma \ref{ratnorm} yields the following consequence:

\begin{cor}
\label{ratgen}
If $\mathfrak{Ulr}(X_d)$ is generated by classes of rational normal curves of degree $d,$ then $D \in {\rm Pic}(X_d)$ belongs to $\mathfrak{Ulr}(X_d)$ if and only if MRC holds for the general smooth member of $|D|.$ \hfill \qedsymbol
\end{cor}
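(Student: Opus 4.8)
The plan is to check that our assumption makes the hypothesis of Theorem \ref{mrchyp} automatic, and then to read the corollary off from that theorem together with the definition of $\mathfrak{Ulr}(X_d).$ The entire content is that each generating rational normal curve class already satisfies the MRC requirement imposed by Theorem \ref{mrchyp}.

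First I would show that for a generator $Q$ of $\mathfrak{Ulr}(X_d)$, the general smooth member of $|Q|$ is a rational normal curve of degree $d.$ Since $\mathcal{O}_{X_d}(Q)$ is an Ulrich line bundle, Lemma \ref{numprop} guarantees that the general $C \in |Q|$ is smooth and irreducible, and Proposition \ref{chern} gives $C \cdot H = d$ together with $c_1^2 = d-2$ (as $c_2 = 0$ in rank $1$); adjunction then forces $C$ to have genus $0.$ A hyperplane section of $X_d$ has arithmetic genus $1,$ so the irreducible genus-$0$ curve $C$ cannot be contained in any hyperplane; hence $C$ spans $\mathbb{P}^d$ and is therefore a rational normal curve of degree $d.$ By Lemma \ref{ratnorm} it satisfies the MRC, so every generator of $\mathfrak{Ulr}(X_d)$ meets the hypothesis of Theorem \ref{mrchyp}.

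With this in place, Theorem \ref{mrchyp} applies and delivers, for every effective $D$ not lying in a hyperplane, the equivalence of $D \in \mathfrak{Ulr}(X_d)$ (its condition (i), with $r$ fixed by $\deg D = dr$) with the MRC for the general smooth member of $|D|$ (its condition (ii)). To conclude the corollary in the stated generality I would add the short bridge in each direction. If $D \in \mathfrak{Ulr}(X_d)$, then Propositions \ref{repcurve} and \ref{chern} show $D$ is effective of degree $dr$; for $r \geq 2$ one has $dr > d = \deg(X_d \cap \Lambda)$ for any hyperplane $\Lambda,$ so no general smooth member of $|D|$ can lie in a hyperplane (the rank-$1$ case being the rational normal curve just treated), and Theorem \ref{mrchyp}(i)$\Rightarrow$(ii) yields the MRC. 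Conversely, the MRC for the general smooth member of $|D|$ is precisely condition (ii), so (ii)$\Rightarrow$(i) places $D$ in $\mathfrak{Ulr}(X_d).$

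The only genuine difficulty is bookkeeping rather than new geometry. Theorem \ref{mrchyp} carries the side conditions that $D$ be effective, of degree $dr,$ and nondegenerate, whereas the corollary is phrased for an arbitrary $D \in {\rm Pic}(X_d)$; the main care is therefore to confirm that these conditions are forced for first Chern classes of Ulrich bundles (so that the forward direction is unconditional) and, for the converse, to read ``the MRC for the general smooth member of $|D|$'' in the degree range where the theta-divisorial characterization of Proposition \ref{fmpmrc} is available, namely $\deg D$ divisible by $d.$ Once this identification is made explicit, the corollary is an immediate consequence of Theorem \ref{mrchyp} and Lemma \ref{ratnorm}.
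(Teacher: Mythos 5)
Your proposal is correct and follows the same route as the paper, which proves this corollary simply by combining Theorem \ref{mrchyp} with Lemma \ref{ratnorm}; your verification that the generators (being rational normal curve classes) satisfy the MRC hypothesis, and your bookkeeping on effectivity, degree divisibility, and nondegeneracy, merely make explicit what the paper leaves implicit.
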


\begin{rem}
\label{degree7}
As mentioned in the Introduction, the hypothesis of Corollary \ref{ratgen} is satisfied for the smooth cubic surface $X_3.$  On the other hand, while the degree-7 del Pezzo surface $X_7$ contains two rational normal curve classes $Q_1$ and $Q_2$, they do not generate $\mathfrak{Ulr}(X_7).$  Proposition \ref{hplusrnc} implies that there exist rank-2 Ulrich bundles with first Chern classes $H+Q_1$ and $H+Q_2,$ and neither one of these classes is equal to $Q_1+Q_2.$
\end{rem}

\bigskip

\end{document}